\def\R{{\mathbb R}}
\def\E{{\mathbb E}}
\renewcommand{\P}{\mathbb{P}}
\def\X{{\mathcal X}}
\def\S{{\mathcal S}}
\def\V{{\mathcal V}}
\def\Net{{\mathcal N}}
\def\Y{{\mathcal Y}}
\newcommand{\vct}[1]{\bm{#1}}
\newcommand{\eps}{\varepsilon}
\renewcommand{\epsilon}{\varepsilon}
\newcommand{\abs}[1]{\left|#1\right|}
\newtheorem{theorem}{Theorem}
\newtheorem{definition}{Definition}
\newtheorem{lemma}{Lemma}
\newtheorem{proposition}{Proposition}
\newtheorem{remark}{Remark}
\newtheorem{corollary}{Corollary}
\DeclareMathOperator*{\argmin}{arg\,min}
\renewcommand{\span}{\text{span}}
\newcommand{\vect}{\text{vect}}
\newcommand{\polylog}{\text{polylog}}
\DeclareMathOperator*{\outprod}{\bigcirc}
\definecolor{mypink3}{cmyk}{0, 0.7808, 0.4429, 0.1412}
\title{Modewise Operators, the Tensor Restricted Isometry Property, and Low-Rank Tensor Recovery}\thanks{
\noindent Mark A. Iwen partially supported by NSF DMS 1912706.  Deanna Needell partially supported by NSF DMS 2011140. Mark A. Iwen, Deanna Needell, and Elizaveta Rebrova partially supported by NSF DMS 2106472.} 
\author{Mark A. Iwen}
\address{Michigan State University, Department of Mathematics, and the Department of Computational Mathematics, Science and Engineering (CMSE).}
\email{markiwen@math.msu.edu} 
\author{Michael Perlmutter}
\address{University of California, Los Angeles, Department of Mathematics}
\email{perlmutter@math.ucla.edu}
\author{Deanna Needell}
\address{University of California, Los Angeles, Department of Mathematics}
\email{deanna@math.ucla.edu}
\author{Elizaveta Rebrova}
\address{Princeton University, Department of Operations Research and Financial Engineering}
\email{elre@princeton.edu}
\begin{document}

\maketitle

\begin{abstract}
Recovery of sparse vectors and low-rank matrices from a small number of linear measurements is well-known
to be possible under various model assumptions on the measurements. The key requirement on the measurement matrices is typically the restricted isometry property, that is, approximate orthonormality when acting on the subspace to be recovered. Among the most widely used random matrix measurement models are (a) independent sub-gaussian models and (b) randomized Fourier-based models, allowing for the efficient computation of the measurements.

For the now ubiquitous tensor data, direct application of the known recovery algorithms to the vectorized or matricized tensor is awkward and memory-heavy because of the huge measurement matrices to be constructed and stored. In this paper, we propose modewise measurement schemes based on sub-gaussian and randomized Fourier measurements. These modewise operators act on the pairs or other small subsets of the tensor modes separately. They require significantly less memory than the measurements working on the vectorized tensor, provably satisfy the tensor restricted isometry property and experimentally can recover the tensor data from fewer measurements and do not require impractical storage.
\end{abstract}

\section{Introduction and prior work}\label{sec: intro}

Geometry preserving
dimension reduction has become important in a wide variety of applications in the last two decades due to improved sensing capabilities and the increasing prevalence of massive data sets.   This is motivated in part by the fact that the data one collects often consists of high-dimensional representations of intrinsically simpler and effectively lower-dimensional data.  In such settings, randomized linear projections have been demonstrated to preserve the intrinsic geometric structure of the collected data in a wide range of applications in both computer science (where one often deals with finite data sets \cite{dasgupta1999elementary,achlioptas2003database}) and signal processing (where manifold \cite{baraniuk_random_2009} and sparsity \cite{foucart_mathematical_2013} assumptions are common).  In this context, the vast majority of prior work has been focused on recovering vector data taking values in a set $S \subset \mathbb{R}^n$ using random linear maps into $\mathbb{R}^m$ with $m \ll n$ which are guaranteed to approximately preserve the norms of all elements in $S$. The focus of this paper is extending this line of work to  higher-order
tensors taking values in $\mathbb{R}^{n_1\times\ldots\times n_d}$.

In the vector case, uniform guarantees for the approximate norm preservation for all sparse vectors, in the form of the restricted isometry property (RIP), have numerous applications. They include recovery algorithms that reconstruct all sparse vectors from a few linear measurements (such as,  $l_1$-minimization \cite{candes2006stable,foucart2012sparse,mo2011new}, orthogonal matching pursuit \cite{zhang2011sparse}, CoSaMP \cite{needell2009cosamp, foucart2012sparse}, iterative hard thresholding \cite{blumensath2009iterative} and hard
thresholding pursuit \cite{foucart2011hard}). Extending these algorithms from sparse vector recovery to low-rank matrix or low-rank tensor recovery is very natural. Indeed, rank-$r$ matrices (i.e., two-mode tensors) in $\mathbb{R}^{n \times n}$  can be recovered from $\mathcal{O}(rn)$ linear measurements \cite{candes2009exact,foucart_mathematical_2013}.  Extensions to the low-rank higher-order tensor setting,
however, are less straightforward  due to, e.g., the more complicated structure of higher-order singular value decomposition and non-unique definition of the tensor rank. Still, there are many applications that motivate the
use of tensors, ranging from video and longitudinal imaging \cite{liu2012tensor, bengua2017efficient} to machine learning \cite{romera2013multilinear, vasilescu2005multilinear} and
differential equations \cite{beck2000multiconfiguration, lubich2008quantum}.  Thus, while tensor applications are ubiquitous and moreover the tensors arising in these applications are extremely large-scale, few methods exist that do satisfactory tensor dimension reduction.  Our goal here is thus to demonstrate a tensor dimension reduction technique that is computationally feasible (in terms of application and storage) and that guarantees preservation of geometry.  As a motivating example, we consider the problem of tensor reconstruction from such dimension reduction measurements, and in particular the Tensor Iterative Hard Thresholding method is used for this purpose herein.

In \cite{rauhut2017low}, the authors propose tensor extensions of the Iterative Hard Thresholding (IHT) method for several tensor decomposition formats, namely the higher-order singular value decomposition (HOSVD), the tensor train format, and the general hierarchical Tucker decomposition. Additionally, the recent papers \cite{SWIM19siht,grotheer2019iterative} extend the Tensor IHT method (TIHT) to low Canonical Polyadic (CP) rank and low Tucker rank tensors, respectively.  TIHT as the name suggests is an iterative method that consists of one step that applies the adjoint of the measurement operator to the remaining residual and a second step that thresholds that signal proxy to a low-rank tensor.  This method has seen provable guarantees for reconstruction under various geometry preserving assumptions on the measurement maps \cite{rauhut2017low,SWIM19siht,grotheer2019iterative}.   All these works however propose first reshaping a $d$-mode tensor $\mathcal{X}\in\mathbb{R}^{n_1\times\ldots\times n_d}$ into an $\prod_{i=1}^dn_i$-dimensional vector $\mathbf{x}$ and then multiplying by an $m\times \prod_{i=1}^dn_i$ matrix $A$. Unfortunately, this means that the matrix $A$ must be even larger than the original tensors $\mathcal{X}$. The main goal of this paper is to propose a more memory-efficient alternative to this approach.

In particular, we propose a \emph{modewise} framework for low-rank tensor recovery.  A general two-stage modewise linear operator $\mathcal{L}: \mathbb{R}^{n_1 \times \dots \times n_d} \rightarrow \mathbb{R}^{m_1 \times \dots \times m_{d'}}$ takes the form 
\begin{equation}
\label{equ:GeneralModewiseOp}
\mathcal{L}(\mathcal{X}) := \mathcal{R}_2 \left(\mathcal{R}_1 (\mathcal{X}) \times_1 A_1 \dots \times_{\tilde d} A_{\tilde d} \right) \times_1 B_1 \dots \times_{d'} B_{d'},
\end{equation}
where $(i)$ $\mathcal{R}_1$ is a reshaping operator which reorganizes an $\mathbb{R}^{n_1 \times \dots \times n_d}$ tensor into an $\mathbb{R}^{\tilde{n}_1 \times \dots \times \tilde{n}_{\tilde d}}$ tensor, after which $(ii)$ each $A_j \in \mathbb{R}^{\tilde{m}_j \times \tilde{n}_j}$ is applied to the resphaped tensor for $j = 1, \dots, \tilde{d}$ via a modewise product (reviewed in Section \ref{sec: prereqs}), followed by $(iii)$ an additional reshaping via $\mathcal{R}_2$ into an $\mathbb{R}^{m'_1 \times \dots \times m'_{d'}}$ tensor, and finally $(iv)$ additional $j$-mode products with the matrices $B_j \in \mathbb{R}^{m_j \times m'_j}$ for $j = 1, \dots, d'$.  More general $n$-stage modewise operators can be defined similarly.  First analyzed in \cite{iwen2019lower,jin2019faster} for aiding in the rapid computation of the CP decomposition, such modewise compression operators offer a wide variety of computational advantages over standard vector-based approaches (in which ${\mathcal R}_1$ is a vectorization operator so that $\tilde d = 1$, $A_1 = A \in {\mathbb{R}^{m \times \prod_j^d n_j}}$ is a standard Johnson-Lindenstrauss map, and all remaining operators $\mathcal{R}_2, B_1, \dots$ are the identity).  In particular, when $\mathcal{R}_1$ is a more modest reshaping (or even the identity) the resulting modewise linear transforms can be formed using significantly fewer random variables (effectively, independent random bits), and stored using less memory by avoiding the use of a single massive  $m \times \prod_j^d n_j$ matrix.  In addition, such modewise linear operators also offer trivially parallelizable operations, faster serial data evaluations than standard vectorized approaches do for structured data (see, e.g., \cite{jin2019faster}), and the ability to better respect the multimodal structure of the given tensor data.  

{\bf Related Work and Contributions:}  This paper is directly motivated by recent work on low-rank tensor recovery using vectorized measurements  \cite{rauhut2017low,SWIM19siht,grotheer2019iterative}.  Given the framework that modewise measurement operators \eqref{equ:GeneralModewiseOp} provide for creating highly structured and computationally efficient measurement maps, we aim to provide both theoretical guarantees and empirical evidence that several modewise maps allow for the efficient recovery of tensors with low-rank HOSVD decompositions. This represents the first study of such modewise maps for performing norm-preserving dimension reduction of nontrivial infinite sets of elements in (tensorized) Euclidean spaces, and so provides a general framework for generalizing the use of such maps to other types of, e.g., low-rank tensor models.  See Section~\ref{sec: statement of main results} for the specifics of our theoretical results as well as Section~\ref{sec: IHT} for an empirical demonstration of the good performance such modewise maps can provide for tensor recovery in practice.

Other recent work involving the analysis of modewise maps for tensor data include, e.g., applications in kernel learning methods which effectively use modewise operators specialized to finite sets of rank-one tensors \cite{ahle2020oblivious}, as well as a variety of works in the computer science literature aimed at compressing finite sets of low-rank (with respect to, e.g., CP and tensor train decompositions \cite{rakhshan2020tensorized}) tensors. More general results involving extensions of bounded orthonormal sampling results to the tensor setting \cite{jin2019faster, bamberger2021johnson} apply to finite sets of arbitrary tensors.  With respect to norm-preserving modewise embeddings of infinite sets, prior work has been limited to oblivious subspace embeddings (see, e.g., \cite{iwen2019lower, malik2020guarantees}).  Here, we extend these techniques to the set of all tensors with a low-rank HOSVD decomposition in order to obtain modewise embeddings with the Tensor Restricted Isometry Property (TRIP).  Having obtained modewise TRIP operators, we then consider low-rank tensor recovery via Tensor IHT (TIHT).

{\bf Paper Outline:}  The rest of this paper is organized as follows.  In Section \ref{sec: prereqs}, we will provide a brief review of basic tensor definitions. In Section \ref{sec: statement of main results}, we will state our main results, which we then prove in Section \ref{sec: main proofs}. In Section \ref{sec: IHT}, we discuss applications of our results recovering low-rank tensors via the TIHT, and in present numerical results. In Section \ref{sec: conclusion}, we provide a short conclusion and discussion of directions for future work. Proofs of auxiliary results are provided in the appendices.

\section{Tensor prerequisites} \label{sec: prereqs}

In this section, we briefly review some basic definitions concerning tensors. For further overview, we refer the reader to  \cite{kolda2009tensor}. Let $d\geq 1$, $n_1,\ldots,n_d\geq 1$ be integers, and $[n_j] \coloneqq \{ 1, \dots, n_j \}$ for all $j = 1, \dots, d$.  For a multi-index ${\bf i} = (i_1, \dots, i_d) \in [n_1] \times \dots \times [n_d]$, we will denote the $\mathbf{i}$-th entry of a $d$-mode tensor  $
\X \in \mathbb{R}^{n_1\times\ldots \times n_d}$ by $\mathcal{X}_{\bf i}$.  When convenient we wil also denote the entries by $  \mathcal{X}(i_1, \dots, i_d)$,  $\mathcal{X}_{i_1, \dots, i_d}$, or  $\mathcal{X}({\bf i})$.  For the remainder of this work, we will use bold text to denote vectors (i.e., one-mode tensors), capital letters to denote matrices (i.e., two-mode tensors) and use calligraphic text for all other tensors. 

\subsection{Modewise multiplication and $j$-mode products:}  For $1\leq j\leq d$, the \textit{$j$-mode product} of $d$-mode tensor $\mathcal{X} \in \mathbb{R}^{n_1 \times \dots \times n_{j-1} \times n_j \times n_{j+1} \times \dots \times n_d}$ with a matrix $U \in \mathbb{R}^{m_j \times n_j}$ is another $d$-mode tensor $\mathcal{X} \times_j U \in \mathbb{R}^{n_1 \times \dots \times n_{j-1} \times m_j \times n_{j+1} \times \dots \times n_d}$.  Its entries are given by 
\begin{equation}
(\mathcal{X} \times_j U)_{i_1, \dots, i_{j-1}, \ell, i_{j+1}, \dots, i_d} = \sum_{i_{j}=1}^{n_j} \mathcal{X}_{i_{1},\dots,i_j,\dots,i_{d}} U_{\ell,i_{j}}
\label{equ:DefModejProduct}
\end{equation}
for all $(i_1, \dots, i_{j-1}, \ell, i_{j+1}, \dots, i_d) \in [n_1] \times \dots \times [n_{j-1}] \times [m_j] \times [n_{j+1}] \times \dots \times [n_d]$.
If $\mathcal{Y}$ is a tensor of the form $\mathcal{Y} = \outprod_{i=1}^d {\bf y}_i \in \mathbb{R}^{n_1 \times \dots \times n_d}$,  where $\outprod$ denotes the outer product, then one may use \eqref{equ:DefModejProduct} to see that 
\begin{equation}\label{eqn:modewise action}\mathcal{Y} \times_j U = \left( \outprod_{i=1}^d {\bf y}_i \right) \times_j U = \left( \outprod_{i=1}^{j-1} {\bf y}_i \right) \outprod U {\bf y}_j \outprod \left( \outprod_{i=j+1}^{d} {\bf y}_i \right).\end{equation}
For further discussion of the properties of modewise products, please see \cite{iwen2019lower, kolda2009tensor}.

\subsection{HOSVD decomposition and multilinear rank}
Let $\mathbf{r}=(r_1,\ldots, r_d)$ be a multi-index in $[n_1] \times \dots \times [n_d]$. 
We say that a $d$-mode tensor $\mathcal{X}$ has \textbf{multilinear rank} or \textbf{HOSVD rank} at most $\mathbf{r}$ if there exist subspaces $\mathcal{U}_1 \subset \mathbb{R}^{n_1},\ldots, \mathcal{U}_d\subset \mathbb{R}^{n_d}$ such that 
 \begin{equation*}
 \text{dim}\: \mathcal{U}_i = r_i
 \quad \text{ and } \quad
\mathcal{X}\in \bigotimes_{i=1}^d \mathcal{U}_i,
\end{equation*}
where $\bigotimes_{i=1}^d \mathcal{U}_i$ denotes the tensor product of the subspaces $\mathcal{U}_i$ here.
 We note that a tensor $\mathcal{X}$ has rank at most $\mathbf{r}$ if and only if there exists a core tensor $\mathcal{C} \in \mathbb{R}^{r_1 \times \dots \times r_d}$ such that  
\begin{equation}\label{eqn: HOSVD basis}
    \mathcal{X} =\mathcal{C}\times_1 U^1\times_2\ldots \times_d U^d
    =  
    \sum_{k_d=1}^{r_d}\ldots\sum_{k_1=1}^{r_1}
    {\mathcal C}(k_1,\ldots k_d) \outprod_{i=1}^d {\bf u}_{k_i}^i,
\end{equation}
where, for each $1\leq i \leq d$, $\mathbf{u}^i_1,\ldots,\mathbf{u}^i_{r_i}$ is an orthonormal basis for $\mathcal{U}_i,$ and  $U^i$ is the $n_i\times r_i$ matrix $U^i=(\mathbf{u}^i_1,\ldots,\mathbf{u}^i_{r_i})$.
A factorization of the form \eqref{eqn: HOSVD basis} is called a \textbf{Higher-Order Singular Value Decomposition (HOSVD)} of the tensor  $\mathcal{X}.$ 
It is well-known (see e.g., \cite{rauhut2017low}) that we may assume that the core tensor $\mathcal{C}$ has  orthogonal subtensors in the sense that for all $1 \leq i \leq d$, we have    $\langle {\mathcal C}_{k_i=p}, {\mathcal C}_{k_i=q}\rangle=0$ for all $p\neq q$, i.e. 
\begin{equation}\label{eqn: orthogonal subtensors}
\sum_{\substack{k_j=1 \\ j\neq i}}^{r_j} {\mathcal C}(k_1,\ldots,p, \ldots, k_d) {\mathcal C}(k_1,\ldots,q, \ldots, k_d) = 0 \quad \text{ unless } p=q.
\end{equation}
 We also note that, since each of the $\{{\bf u}^i_{k_i}\}_{i=1}^{r_i}$ form an orthonormal basis for $\mathcal{U}_i$, we have $\|\mathcal{C}\|_F=\|\mathcal{X}\|_F \coloneqq \sqrt{\langle \mathcal{X}, \mathcal{X} \rangle}$, where here $\langle\cdot,\cdot\rangle$ denotes the trace inner product.
\begin{remark} The {\bf Canonical Polyadic (CP) rank} of a $d$-mode tensor is the minimum number of rank-one tensors (i.e., outer products of $d$ vectors) required to represent the tensor as a sum.  If $\mathcal{X}$ has HOSVD rank $\mathbf{r}$ then \eqref{eqn: HOSVD basis} implies $\mathcal{X}$ has CP rank at most $\Pi_{i=1}^d r_i$.(In particular, if $r_i=r$ for all $i$  , then $\mathcal{X}$ has CP rank at most $r^d$.)
\end{remark}

\subsection{Restricted Isometry Properties and Tensors}

\begin{definition}\label{def: trip}[TRIP$(\delta,\mathbf{r})$ property] 
We say that a linear map $\mathcal{A}$ has the TRIP$(\delta,\mathbf{r})$ property if for all $\mathcal{X}$ with HOSVD rank at most $\mathbf{r}$ we have
    \begin{equation}\label{eqn: delta-r-TRIP}
        (1-\delta)\|\mathcal{X}\|^2 \leq \|\mathcal{A}(\mathcal{X})\|^2 \leq(1+\delta)\|\mathcal{X}\|^2    
        \end{equation}
\end{definition}

\begin{definition}\label{def: trip_on_set}[RIP$(\eps, \mathcal{S})$ property] 
We say that a linear map $\mathcal{A}$ has the RIP$(\eps, \mathcal{S})$ property if for all elements $s \in \mathcal{S}$
    \begin{equation}\label{eqn: s-RIP}
        (1-\eps)\|s\|^2 \leq \|\mathcal{A}(s)\|^2 \leq(1+\eps)\|s\|^2    
        \end{equation}
\end{definition}
We emphasize that the  set $\mathcal{S}$ in Definition \ref{def: trip_on_set} can be a subset of any vector space (not necessarily a tensor space). 

\subsection{Reshaping and the HOSVD}\label{3.1}
For simplicity we will assume below, and for the rest of this paper, that there exist $n,r\geq 1$ such that  $n_i=n, r_i=r$ for $1\leq i \leq d$. We note that this assumption is made only for the sake of clarity, and all of our analysis can be extended to the general case. 

We let $\kappa$ be an integer which divides $d$ and let $d'\coloneqq d/\kappa.$ Consider the \emph{reshaping operator}  $$\mathcal{R}:\bigotimes_{i=1}^d\mathbb{R}^{n}\rightarrow \bigotimes_{i=1}^{d'}\mathbb{R}^{n^{\kappa}}$$ that flattens every $\kappa$ modes of a tensor into one. Note that $\mathcal{R}$ decreases the total number of modes from $d$ to $d' = d/\kappa$. Formally, $\mathcal{R}$ is defined to be the unique linear operator such that on rank-one tensors it acts as 
\begin{equation*}
    \mathcal{R} \left(\outprod_{i=1}^d\mathbf{x}^i \right) \coloneqq \outprod_{i=1}^{d'}\left(\bigotimes_{\ell=1+\kappa(i-1)}^{\kappa i}\mathbf{x}^\ell\right)\eqqcolon
     \outprod_{i=1}^{d'}\accentset{\circ}{\mathbf{x}}^{i},
\end{equation*}
where $\otimes$ denotes the Kronecker product when applied to vectors.
We observe that if a tensor $\mathcal{X}$ has a form~\eqref{eqn: HOSVD basis}, then its reshaping $\accentset{\circ}{\mathcal{X}} \coloneqq \mathcal{R}(\mathcal{X})$  is the $d'$-mode tensor $\accentset{\circ}{\mathcal{X}}\in\bigotimes_{i=1}^{d'}\R^{n^{\kappa}} $ with HOSVD rank at most  $\mathbf{r'} \coloneqq(r^{\kappa},\ldots, r^{\kappa})$ given by \begin{equation}\label{eqn: reshape HOSVD}
    \accentset{\circ}{\mathcal{X}} = \sum_{j_{d'}=1}^{r^{\kappa}}\ldots\sum_{j_1=1}^{r^{\kappa}} \accentset{\circ}{\mathcal C}(j_1,\ldots,j_{d'}) \outprod_{\ell=1}^{d'}\accentset{\circ}{\mathbf{u}}^{\ell}_{j_{\ell}},
    \end{equation}
where the new component vectors $\accentset{\circ}{\mathbf{u}}^{\ell}_{j_{\ell}}$ are obtained by taking Kronecker product of the appropriate $\mathbf{u}^i_{k_i}$, and where $\accentset{\circ}{\mathcal C} \in \mathbb{R}^{r^\kappa \times \dots \times r^\kappa}$ is a reshaped version of $\mathcal{C}$ from \eqref{eqn: HOSVD basis}.
Since each of the $\{\mathbf{u}^i_{k_i}\}_{k_i=1}^{r}$ was an orthonormal basis for $\mathcal{U}_i,$ it follows that $\{\accentset{\circ}{\mathbf{u}}^i_{j_i}\}_{j_i=1}^{r^{\kappa}}$ is an 
orthonormal basis for $\accentset{\circ}{\mathcal{U}}_i \coloneqq \span \left(\{\accentset{\circ}{\mathbf{u}}^i_{j_i}\}_{j_i=1}^{r^\kappa} \right)$. 

\section{Main results: modewise TRIP}\label{sec: statement of main results}

For $1\leq i \leq d'$, let $A_i$ be an $m\times n^\kappa$ matrix, let 
$\mathcal{A}: \R^{n^{\kappa}\times\ldots\times n^{\kappa}}\rightarrow \R^{m\times\ldots\times m}$ be the linear map     which acts modewise on $d'$-mode tensors by 
\begin{equation}\label{eqn: A}
\mathcal{A}(\mathcal{Y})= \mathcal{Y}\times_1 A_1\times_2\ldots\times_{d'} A_{d'}.
\end{equation}
Let $\mathcal{X}$ be a $d$ mode tensor with  HOSVD decomposition given by \eqref{eqn: HOSVD basis}. By  \eqref{eqn:modewise action} and \eqref{eqn: reshape HOSVD}, we have that 
\begin{equation}\label{eqn: newfactors}
    \mathcal{A}(\mathcal{R}(\mathcal{X})) =\mathcal{A}(\accentset{\circ}{\mathcal{X}}) = \sum_{j_{d'}=1}^{r^{\kappa}}\ldots\sum_{j_1=1}^{r^{\kappa}} \accentset{\circ}{C}(j_1,\ldots,j_{d'}) (A_1\accentset{\circ}{\mathbf{u}}^1_{j_1} \circ\ldots \circ A_{d'}\accentset{\circ}{\mathbf{u}}^{d'}_{j_{d'}}).
\end{equation}
Our first main result will show that $\mathcal{A}$ satisfies the TRIP$(\delta,\mathbf{r})$ property under the assumption that each of the $A_i$ satisfies a restricted isometry property on the set $\S_{1,2}$ defined below.

\begin{definition}\label{def: S}[The set $\S_{1,2}$] Consider a set of vectors in $\mathbb{R}^{n^\kappa},$
\begin{equation}\label{s1}
S_1\coloneqq\{\accentset{\circ}{\bf u}~|~ \accentset{\circ}{\bf u} = \otimes_1^\kappa {\bf u}^i, {\bf u}^i\in\mathbb{S}^{n-1}\},
\end{equation}
and let
 $\S_2 \coloneqq \left\{ \frac{{\bf x} + {\bf y}}{\| {\bf x} + {\bf y} \|_2} ~\big|~ {\bf x}, {\bf y} \in \S_1 ~{\rm s.t.}~ \langle {\bf x}, {\bf y} \rangle = 0 \right\}.$
For the rest of this text we will let  $\S_{1,2}\coloneqq\S_1\cup \S_2$, and note that $\S_{1,2} \subseteq \mathbb{S}^{n^\kappa-1}$.
\end{definition}
More precisely, will show that $\mathcal{A}\circ\mathcal{R}$ satisfies the TRIP$(\delta,\mathbf{r})$ property under the assumption that each of the $A_i$ satisfy the RIP($\eps$, $\S_{1,2}$), where $\eps$ is a suitably chosen parameter depending on $\delta$. In the case where $\mathbf{r}=\mathbf{1}\coloneqq(1,1,\ldots,1)$, this is nearly trivial. Indeed, if $\accentset{\circ}{\mathcal{X}} = c \outprod_{\ell=1}^{d'}\accentset{\circ}{\mathbf{u}}^{\ell}
    $, and $\mathcal{A}$ is the map defined in  \eqref{eqn: A}, then we have 
    \begin{equation*}
\left\|\mathcal{A}( \accentset{\circ}{\mathcal{X}}) \right\| =\left\| c \outprod_{\ell=1}^{d'}A_i\accentset{\circ}{\mathbf{u}}^{\ell}\right\|=|c|\prod_{\ell=1}^d \|A_i\accentset{\circ}{\mathbf{u}}^{\ell}\|.
    \end{equation*}
    Therefore, since $\left\|\accentset{\circ}{\mathcal{X}} \right\| =|c|\prod_{\ell=1}^d \|\accentset{\circ}{\mathbf{u}}^{\ell}\|$,  we immediately obtain  the following proposition.
    \begin{proposition}\label{prop: rank1}
    Suppose that $\mathcal{A}$ is defined as per \eqref{eqn: A} and that each of the $A_i$ have RIP$\left(\eps, \S_{1,2}\right)$ property. Let $\delta = \max\{(1+\eps)^d-1,(1-\eps)^d+1\}$ and assume that $\delta < 1$.
Then $\mathcal{A}\circ\mathcal{R}$ satisfies the TRIP$(\delta,\mathbf{1})$ property, that is,
\begin{equation*}
    (1-\delta)\|\mathcal{X}\|^2
    \leq\|\mathcal{A}(\mathcal{R}(\mathcal{X}))\|^2    \leq (1+\delta)\|\mathcal{X}\|^2
\end{equation*}
for all $\mathcal{X}$ with HOSVD rank $\mathbf{1}=(1,1,\ldots,1)$. \end{proposition}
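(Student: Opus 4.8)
The plan is to reduce everything to the per-mode RIP hypothesis by exploiting the fact that a rank-$\mathbf{1}$ reshaped tensor is a single outer product, whose Frobenius norm factors as a product of the norms of its component vectors. Concretely, I would start from the factorization already recorded just above the statement: writing $\mathcal{X} = c\outprod_{i=1}^d \mathbf{u}^i$ with each $\mathbf{u}^i \in \mathbb{S}^{n-1}$, its reshaping is the single outer product $\accentset{\circ}{\mathcal{X}} = c\outprod_{\ell=1}^{d'}\accentset{\circ}{\mathbf{u}}^{\ell}$, where each $\accentset{\circ}{\mathbf{u}}^{\ell}$ is the Kronecker product of the corresponding block of $\kappa$ vectors $\mathbf{u}^i$. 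By \eqref{eqn:modewise action}, applying $\mathcal{A}$ mode by mode sends this to $c\outprod_{\ell=1}^{d'} A_\ell\accentset{\circ}{\mathbf{u}}^{\ell}$, and since the Frobenius norm of a rank-one tensor equals the product of the Euclidean norms of its factors, $\|\mathcal{A}(\accentset{\circ}{\mathcal{X}})\| = |c|\prod_{\ell=1}^{d'}\|A_\ell\accentset{\circ}{\mathbf{u}}^{\ell}\|$.

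The key observation is then that each factor $\accentset{\circ}{\mathbf{u}}^{\ell}$ is a Kronecker product of $\kappa$ unit vectors, hence lies in $S_1 \subseteq \S_{1,2}$, so the RIP$(\eps,\S_{1,2})$ hypothesis on $A_\ell$ applies to it directly. Using $\|\accentset{\circ}{\mathbf{u}}^{\ell}\| = 1$, this gives for every $\ell$ the one-mode bound $(1-\eps) \le \|A_\ell\accentset{\circ}{\mathbf{u}}^{\ell}\|^2 \le (1+\eps)$. Taking the product over $\ell = 1,\dots,d'$ yields $(1-\eps)^{d'}|c|^2 \le \|\mathcal{A}(\accentset{\circ}{\mathcal{X}})\|^2 \le (1+\eps)^{d'}|c|^2$.

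To finish, I would invoke that reshaping together with the orthonormality of the HOSVD factors preserves the Frobenius norm, so $\|\mathcal{X}\| = \|\accentset{\circ}{\mathcal{X}}\| = |c|$, which converts the previous display into two-sided control of $\|\mathcal{A}(\mathcal{R}(\mathcal{X}))\|^2$ by $(1\pm\eps)^{d'}\|\mathcal{X}\|^2$. Since $d' = d/\kappa \le d$ and $1-\eps \le 1 \le 1+\eps$, the exponents relax to $d$, giving $\|\mathcal{A}(\mathcal{R}(\mathcal{X}))\|^2 \le (1+\eps)^d\|\mathcal{X}\|^2$ and $\|\mathcal{A}(\mathcal{R}(\mathcal{X}))\|^2 \ge (1-\eps)^d\|\mathcal{X}\|^2$, which are exactly the bounds $(1+\delta)\|\mathcal{X}\|^2$ and $(1-\delta)\|\mathcal{X}\|^2$ for the stated $\delta$ (with the lower branch of the maximum read as $1-(1-\eps)^d$, so that the hypothesis $\delta<1$ is meaningful and the per-mode lower factors stay positive). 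I do not expect a genuine obstacle here: the entire content is recognizing that the rank-one structure makes the norm multiplicative across modes and that each Kronecker factor sits in the set $\S_{1,2}$ on which RIP is assumed. The only point deserving a little care is the bookkeeping of the exponent $d'$ versus $d$ and checking that relaxing $(1\pm\eps)^{d'}$ to $(1\pm\eps)^d$ preserves both inequality directions.
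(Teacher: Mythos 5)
Your proof is correct and follows essentially the same route as the paper's: the paper likewise writes the reshaped rank-one tensor as $c\outprod_{\ell=1}^{d'}\accentset{\circ}{\mathbf{u}}^{\ell}$, uses multiplicativity of the Frobenius norm on rank-one tensors to get $\|\mathcal{A}(\accentset{\circ}{\mathcal{X}})\| = |c|\prod_{\ell}\|A_\ell\accentset{\circ}{\mathbf{u}}^{\ell}\|$, and applies the RIP$(\eps,\S_{1,2})$ hypothesis to each Kronecker factor $\accentset{\circ}{\mathbf{u}}^{\ell}\in S_1$. Your explicit bookkeeping of the $d'$ versus $d$ exponent and your reading of the lower branch of $\delta$ as $1-(1-\eps)^d$ (correcting the paper's evident typo $(1-\eps)^d+1$) only make explicit what the paper leaves implicit.
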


Our first main result is the following theorem which is the analogue for Proposition  \ref{prop: rank1} for $r\geq 2.$ It shows that if the each of the $A_i$ satisfies RIP$\left(\eps, \S_{1,2}\right)$ property for a suitable value of $\eps$, then $\mathcal{A}$ has the TRIP($\delta,\mathbf{r})$ property.
\begin{theorem}\label{thm: first compression}
Suppose that $\mathcal{A}$ is defined as per \eqref{eqn: A} and that each of the $A_i$ have RIP$\left(\eps, \S_{1,2}\right)$ property. Let $r \ge 2$, let $\delta = 4d'r^d \eps$ and assume that $\delta < 1$.
Then $\mathcal{A}\circ\mathcal{R}$ satisfies the TRIP$(\delta,\mathbf{r})$ property, i.e.,
\begin{equation}\label{delta-trip}
    (1-\delta)\|\mathcal{X}\|^2
    \leq\|\mathcal{A}(\mathcal{R}(\mathcal{X}))\|^2    \leq (1+\delta)\|\mathcal{X}\|^2
\end{equation}
for all $\mathcal{X}$ with HOSVD rank less than $\mathbf{r}=(r,r,\ldots,r)$. 
\end{theorem}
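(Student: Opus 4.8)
The plan is to reduce the two-sided TRIP inequality \eqref{delta-trip} to a single bound on the operator norm of a perturbed Kronecker product of Gram matrices. Starting from the reshaped HOSVD \eqref{eqn: reshape HOSVD}, the crucial structural facts (established in Section~\ref{3.1}) are that every factor vector $\accentset{\circ}{\mathbf{u}}^\ell_{j_\ell}$ is a Kronecker product of $\kappa$ unit vectors, hence lies in $\S_1$, and that for each fixed mode $\ell$ the family $\{\accentset{\circ}{\mathbf{u}}^\ell_{j_\ell}\}_{j_\ell=1}^{r^\kappa}$ is orthonormal. Writing $\mathbf{v}^\ell_{j} := A_\ell \accentset{\circ}{\mathbf{u}}^\ell_{j}$ and using \eqref{eqn: newfactors} together with the fact that the trace inner product of outer products factorizes as $\langle \outprod_\ell \mathbf{a}_\ell, \outprod_\ell \mathbf{b}_\ell\rangle = \prod_\ell \langle \mathbf{a}_\ell, \mathbf{b}_\ell\rangle$, I would expand
\begin{equation*}
\|\mathcal{A}(\accentset{\circ}{\mathcal{X}})\|^2 = \sum_{\mathbf{j}, \mathbf{j}'} \accentset{\circ}{\mathcal{C}}(\mathbf{j}) \accentset{\circ}{\mathcal{C}}(\mathbf{j}') \prod_{\ell=1}^{d'} \langle \mathbf{v}^\ell_{j_\ell}, \mathbf{v}^\ell_{j'_\ell}\rangle = \mathbf{c}^\top (G^1 \otimes \cdots \otimes G^{d'}) \mathbf{c},
\end{equation*}
where $\mathbf{c}$ is the vectorization of the core $\accentset{\circ}{\mathcal{C}}$ and $G^\ell$ is the $r^\kappa \times r^\kappa$ Gram matrix with entries $G^\ell_{j,j'} = \langle \mathbf{v}^\ell_j, \mathbf{v}^\ell_{j'}\rangle$. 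Since $\|\mathcal{X}\|^2 = \|\accentset{\circ}{\mathcal{C}}\|_F^2 = \|\mathbf{c}\|^2$, proving \eqref{delta-trip} amounts to showing $\|G^1 \otimes \cdots \otimes G^{d'} - I \otimes \cdots \otimes I\|_{\mathrm{op}} \le \delta$.

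Next I would control each $G^\ell$ individually by writing $G^\ell = I + E^\ell$. The diagonal entries of $E^\ell$ are $\|A_\ell \accentset{\circ}{\mathbf{u}}^\ell_j\|^2 - 1 \in [-\eps, \eps]$ directly from RIP$(\eps, \S_{1,2})$ applied on $\S_1$. For $j \ne j'$ the vectors $\accentset{\circ}{\mathbf{u}}^\ell_j, \accentset{\circ}{\mathbf{u}}^\ell_{j'}$ are orthonormal, so $(\accentset{\circ}{\mathbf{u}}^\ell_j + \accentset{\circ}{\mathbf{u}}^\ell_{j'})/\sqrt{2} \in \S_2$; applying the polarization identity $\langle \mathbf{v}^\ell_j, \mathbf{v}^\ell_{j'}\rangle = \tfrac{1}{2}\left(\|A_\ell(\accentset{\circ}{\mathbf{u}}^\ell_j + \accentset{\circ}{\mathbf{u}}^\ell_{j'})\|^2 - \|\mathbf{v}^\ell_j\|^2 - \|\mathbf{v}^\ell_{j'}\|^2\right)$ together with the three RIP estimates then yields $|E^\ell_{j,j'}| \le 2\eps$ off the diagonal. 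A Gershgorin argument on the symmetric matrix $E^\ell$ gives $\|E^\ell\|_{\mathrm{op}} \le \eps + 2\eps(r^\kappa - 1) \le 2 r^\kappa \eps$.

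Finally I would expand the Kronecker product telescopically: since the operator norm is multiplicative over Kronecker products and $\|I\|_{\mathrm{op}} = 1$,
\begin{equation*}
\left\| \bigotimes_{\ell=1}^{d'} (I + E^\ell) - \bigotimes_{\ell=1}^{d'} I \right\|_{\mathrm{op}} \le \sum_{\emptyset \ne T \subseteq [d']} \prod_{\ell \in T} \|E^\ell\|_{\mathrm{op}} \le (1 + 2 r^\kappa \eps)^{d'} - 1.
\end{equation*}
The assumption $\delta = 4 d' r^d \eps < 1$ forces $2 r^\kappa \eps\, d' \le 1/2$, under which $(1 + 2 r^\kappa \eps)^{d'} - 1 \le \tfrac{2 r^\kappa \eps\, d'}{1 - 2 r^\kappa \eps\, d'} \le 4 d' r^\kappa \eps \le 4 d' r^d \eps = \delta$, completing the argument. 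The main obstacle is precisely this last step: a naive term-by-term bound on the roughly $r^{2d}$ off-diagonal summands would be catastrophic, and the idea that tames it is recognizing $\|\mathcal{A}(\accentset{\circ}{\mathcal{X}})\|^2$ as a quadratic form in a Kronecker product of Gram matrices, so that the perturbation analysis factorizes mode by mode and only the small, uniformly bounded norms $\|E^\ell\|_{\mathrm{op}}$ enter. It is worth noting that this route uses only the orthonormality of the reshaped factor vectors, so the core $\accentset{\circ}{\mathcal{C}}$ may be completely arbitrary and the orthogonal-subtensor property \eqref{eqn: orthogonal subtensors} is not needed here.
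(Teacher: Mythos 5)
Your proof is correct, and it takes a genuinely different route from the paper's. The paper writes $\mathcal{A}=\mathcal{A}_{d'}\circ\cdots\circ\mathcal{A}_1$ and proceeds by induction over the modes: Lemma~\ref{lem: orthogonal sums} (approximate norm preservation on the span of an orthonormal system), Lemma~\ref{lem: A1rank1} (verification of its hypotheses for the rank-one components), and Lemma~\ref{lem: Ytform} (re-expressing each intermediate tensor $\mathcal{Y}_t$ as an orthogonal combination of $r^{\kappa(d'-t)}$ unit-norm rank-one terms) combine to give the per-step factor $1+2r^{\kappa(d'-t)}\eps$, and the product of these factors is then bounded by $1+4d'r^d\eps$. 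You instead prove the bound in one shot by identifying $\|\mathcal{A}(\accentset{\circ}{\mathcal{X}})\|^2$ as the quadratic form of $G^1\otimes\cdots\otimes G^{d'}$: your polarization step is the same mechanism as the paper's Lemma~\ref{lem: preserves angles} (you use only sums, hence $2\eps$ rather than $\eps$ off the diagonal, which you track consistently), and then Gershgorin plus multilinearity of the Kronecker product replace the paper's induction entirely. Two remarks. First, your closing observation does not actually distinguish the two arguments: the paper's proof also relies only on the per-mode orthonormality of the $\accentset{\circ}{\mathbf{u}}^\ell_{j}$ and never invokes the orthogonal-subtensor property \eqref{eqn: orthogonal subtensors}. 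Second, and more interestingly, your route yields a strictly sharper constant: each mode contributes only its own $r^\kappa\times r^\kappa$ Gram perturbation, so you get $\bigl\|\bigotimes_{\ell} G^\ell - I\bigr\|_{\mathrm{op}}\leq 4d'r^\kappa\eps$, whereas the first step of the paper's induction already costs a factor $1+2r^d\eps$ because all $r^d$ rank-one terms are treated as a single orthonormal system. Since $r^\kappa\leq r^d$, this proves the stated theorem a fortiori; if propagated into Corollaries~\ref{cor:sub-gaussian model} and~\ref{cor:sors}, it would relax the RIP level required of each $A_i$ from $\eps=\delta/(4d'r^d)$ to $\delta/(4d'r^\kappa)$ and hence improve the bounds on $m$ by a factor of $r^{2(d-\kappa)}$. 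What the paper's modular route buys is reusability of its ingredients: Lemma~\ref{lem: preserves angles} and the RIP-on-$\S_{1,2}$ bookkeeping are used again in Lemma~\ref{lem: AX in B} en route to Theorem~\ref{thm: second compression}.
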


\begin{proof}
See Section~\ref{sec:ProofofTHm1}.
\end{proof}

The following corollary shows that we may pick the matrices $A_i$ to have i.i.d sub-gaussian entries.

\begin{corollary}\label{cor:sub-gaussian model}
Let $r\geq 2$, and let $\mathbf{r}=(r,r,\ldots,r)$. Suppose that $\mathcal{A}$ is defined as per \eqref{eqn: A} and that each of the $A_i \in \R^{m \times n^\kappa}$ has i.i.d. sub-gaussian entries, for all $i = 1, \ldots, d'$, where $d'=d/\kappa$ for $\kappa \geq 2$, and suppose that $0<\eta, \delta<1.$ Let
\begin{equation}\label{m_first_bound}
m \ge C \delta^{-2}r^{2d} \max\left\{\frac{n d^2\ln(\kappa)}{\kappa}, \frac{d^2}{\kappa^2} \ln\left(\frac{d}{\kappa \eta}\right)\right\}
\end{equation}
for a sufficiently large constant $C$. Then $\mathcal{A}\circ\mathcal{R}$ satisfies TRIP($\delta,\mathbf{r}$) property~\eqref{eqn: delta-r-TRIP} with probability at least $1 - \eta$.
\end{corollary}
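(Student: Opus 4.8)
The strategy is to reduce the corollary to Theorem \ref{thm: first compression} by showing that i.i.d. sub-gaussian matrices $A_i$ satisfy the RIP$(\eps, \S_{1,2})$ property with high probability for a suitable $\eps$, and then invert the relation $\delta = 4d'r^d\eps$ to read off the required number of rows $m$. Concretely, I would first solve $\delta = 4d'r^d\eps$ to get $\eps = \delta/(4d'r^d)$, so that an application of Theorem \ref{thm: first compression} yields TRIP$(\delta, \mathbf{r})$ as soon as every $A_i$ has RIP$(\eps, \S_{1,2})$ with this value of $\eps$. The entire probabilistic content is therefore in controlling $\|A_i \vct{s}\|_2^2$ uniformly over $\vct{s} \in \S_{1,2} \subseteq \mathbb{S}^{n^\kappa - 1}$.

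\textbf{Covering-number bound on $\S_{1,2}$.} The key quantity is the metric entropy of $\S_{1,2}$. Since $\S_{1,2} = \S_1 \cup \S_2$ and $\S_2$ is built from normalized sums of orthogonal pairs drawn from $\S_1$, it suffices to bound the covering number of $\S_1$; the covering number of $\S_2$ is controlled by that of $\S_1 \times \S_1$ up to a constant-factor loss from the normalization map, which is Lipschitz on the relevant region. For $\S_1 = \{\otimes_{i=1}^\kappa \vct{u}^i : \vct{u}^i \in \mathbb{S}^{n-1}\}$, I would build an $\alpha$-net by taking a product of $\beta$-nets on each of the $\kappa$ copies of $\mathbb{S}^{n-1}$. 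Each sphere $\mathbb{S}^{n-1}$ admits a $\beta$-net of cardinality at most $(3/\beta)^n$, and the multilinearity of the Kronecker product (together with $\|\vct{u}^i\|_2 = 1$) gives the telescoping Lipschitz estimate $\|\otimes_i \vct{u}^i - \otimes_i \vct{v}^i\|_2 \le \sum_i \|\vct{u}^i - \vct{v}^i\|_2 \le \kappa\beta$, so choosing $\beta = \alpha/\kappa$ produces an $\alpha$-net of $\S_1$ of size at most $(3\kappa/\alpha)^{\kappa n}$. Taking logarithms, the metric entropy scales like $\kappa n \ln(\kappa/\alpha)$, which is the source of the $n d^2 \ln(\kappa)/\kappa$ term in \eqref{m_first_bound} after substituting $d' = d/\kappa$ and tracking the $\eps^{-2}$ dependence.

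\textbf{Concentration plus union bound.} For a fixed unit vector $\vct{s}$, a standard sub-gaussian concentration (Hanson--Wright / Bernstein) bound gives $\P\big(\,\big|\,\|A_i\vct{s}\|_2^2 - \|\vct{s}\|_2^2\,\big| > t\,\big) \le 2\exp(-cmt^2)$ for $t \in (0,1)$. I would union-bound this over the net and then pass from the net to all of $\S_{1,2}$ by a routine approximation argument, absorbing the net error into a constant factor on $\eps$. Setting the failure probability of all $d'$ matrices below $\eta$ requires an extra additive $\ln(d'/\eta)$ in the exponent, which yields the second term $(d^2/\kappa^2)\ln(d/(\kappa\eta))$ in \eqref{m_first_bound}. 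Taking $t \asymp \eps = \delta/(4d'r^d)$, the factor $\eps^{-2} \asymp \delta^{-2}(d')^2 r^{2d} = \delta^{-2}(d/\kappa)^2 r^{2d}$ is exactly what multiplies the entropy and confidence terms, producing the displayed bound. The union over $i = 1, \dots, d'$ then gives that all $A_i$ simultaneously satisfy RIP$(\eps, \S_{1,2})$ with probability at least $1 - \eta$, and Theorem \ref{thm: first compression} finishes the proof.

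\textbf{Main obstacle.} The routine part is the concentration-and-union-bound machinery; the delicate step I expect to require the most care is the entropy estimate for $\S_2$ and the clean bookkeeping of how $\eps = \delta/(4d'r^d)$ propagates through the $\eps^{-2}$ in the sample-complexity bound, since it is precisely this substitution that converts the abstract RIP requirement into the two competing terms (the dominant $n$-dependent entropy term versus the $\eta$-dependent confidence term) appearing in \eqref{m_first_bound}. Care is also needed to confirm that the Lipschitz-normalization argument for $\S_2$ does not degrade the covering bound by more than a constant in the exponent, so that the metric entropy of $\S_{1,2}$ remains of order $\kappa n \ln(\kappa/\eps)$.
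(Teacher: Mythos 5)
Your overall reduction matches the paper's: set $\eps = \delta/(4d'r^d)$, show each $A_i$ has the RIP$(\eps,\S_{1,2})$ property with probability $1-\eta/d'$, union bound over $i=1,\dots,d'$, and invoke Theorem~\ref{thm: first compression}; your covering estimate for $\S_{1,2}$ (product nets on spheres, telescoping multilinearity, Lipschitz normalization for $\S_2$) is exactly the paper's Lemma~\ref{lemma: cover}. The gap is in the probabilistic engine. A single-scale net plus pointwise concentration cannot yield the stated bound \eqref{m_first_bound}. First, the ``routine'' passage from the net to all of $\S_{1,2}$ is not routine here: $\S_{1,2}$ is not a union of subspaces, and the naive transfer $\|A_i(\vct{s}-\vct{s}_0)\|_2 \le \|A_i\|_{2\to 2}\|\vct{s}-\vct{s}_0\|_2$ costs a factor of order $\sqrt{n^\kappa/m}$. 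The structural fix is the telescoping absorption you already use for the covering bound ($\vct{s}-\vct{s}_0$ is a sum of $\kappa$ scaled elements of $S_1$), but that fix forces the net resolution to be of order $\eps$, not a constant. Second --- and this is the quantitative failure --- a net at resolution $\asymp\eps$ has metric entropy of order $\kappa n\ln(\kappa/\eps)$, exactly as you write in your final paragraph. But $\ln(1/\eps) = \ln(4d'r^d/\delta) \ge d\ln r$, which is not $O(\ln\kappa)$. So your argument only establishes TRIP under
\begin{equation*}
m \gtrsim \delta^{-2}r^{2d}\,\frac{nd^2}{\kappa}\left(\ln\kappa + d\ln r + \ln(1/\delta)\right),
\end{equation*}
which is strictly weaker than \eqref{m_first_bound}, whose entropy term carries only $\ln\kappa$.

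The paper avoids this by never using a single-scale net: its Proposition~\ref{thm: sup_chaos}, proved via the Krahmer--Mendelson--Rauhut suprema-of-chaos-processes inequality (i.e., chaining), requires only
\begin{equation*}
m \gtrsim \eps^{-2}\left(\int_0^1\sqrt{\ln\mathcal{N}(\S_{1,2},t)}\,dt\right)^2,
\end{equation*}
and by Lemma~\ref{lemma: cover} the Dudley integral satisfies $\int_0^1\sqrt{\kappa n\ln(6\kappa/t)}\,dt \le C\sqrt{\kappa n\ln\kappa}$, because $\int_0^1\sqrt{\ln(1/t)}\,dt = O(1)$: the fine scales contribute only a constant, so no $\ln(1/\eps)$ factor ever appears. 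The missing idea in your proposal is therefore multi-scale chaining --- replace the fixed-$\eps$ net and union bound with an appeal to Proposition~\ref{thm: sup_chaos} (or reproduce a generic-chaining argument), after which your covering bound, the substitution $\eps^{-2} = (d/\kappa)^2 r^{2d}\delta^{-2}$, and the union bound over the $d'$ matrices go through verbatim.
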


\begin{proof}
See Section~\ref{sec:ProofofTHm1}.
\end{proof}

For another possible choice of the $A_i,$ we consider the set of Subsampled Orthogonal  with Random Sign matrices defined below. Note, in particular, that this class includes subsampled Fourier (i.e., discrete cosine and sine) matrices.

\begin{definition}[Subsampled Orthogonal with Random Sign (SORS) matrices]\label{SORSdef} Let $F\in\R^{n\times n}$ 
denote an orthonormal matrix obeying
\begin{equation}\label{BOS}
F^*F=I\quad\text{and}\quad\max_{i,j}\abs{F_{ij}}\le \frac{\Delta}{\sqrt{n}}
\end{equation}
for some $\Delta>0.$ 
Let $H \in\R^{m\times n}$ be a matrix whose rows are chosen i.i.d. uniformly at random from the rows of $F$. We define a Subsampled Orthogonal with Random Sign (SORS) measurement ensemble as $A= \sqrt{\frac{n}{m}} H D$, where $D\in\R^{n\times n}$ is a random diagonal matrix whose the diagonal entries are i.i.d.~$\pm 1$ with equal probability.
\end{definition}

Analogous to Corollary  \ref{cor:sub-gaussian model}, the following result shows that we may choose our matrices $A_i$ to be SORS matrices in Theorem \ref{thm: first compression}.

\begin{corollary}\label{cor:sors}
Let $r\geq 2$ and let $\mathbf{r}=(r,r,\ldots,r)$. Suppose that $\mathcal{A}$ is defined as per \eqref{eqn: A} and that each of the $A_i \in \R^{m \times n^\kappa}$  is a SORS matrix with $\Delta \leq C'$ for a universal constant $C'$, as per Definition~\ref{SORSdef}, for all $i = 1, \ldots, d'$, where $d'=d/\kappa$ for $\kappa \geq 2$.  Furthermore, suppose that $0<\eta, \delta<1.$ Let
\begin{equation}\label{m_first_bound_sors}
m\geq C_1\delta^{-2}r^{2d}\frac{n d^2\ln(\kappa)}{\kappa} \cdot L,
\end{equation}
where
\begin{equation}\label{log-factors}
L = \ln\left(\frac{2d}{\kappa \eta}\right) \ln\left(\frac{2en^{\kappa}d}{\kappa \eta}\right) \ln^2\left[C_2 \delta^{-2} r^{2d} \frac{ n d^2 \ln(\kappa)}{\kappa}\ln\left(\frac{2 d}{\kappa \eta}\right)\right]
\end{equation}
and $C_1, C_2$ are sufficiently large absolute constants. Then $\mathcal{A}\circ\mathcal{R}$ satisfies TRIP($\delta,\mathbf{r}$) property~\eqref{eqn: delta-r-TRIP} with probability at least $1 - \eta$.
\end{corollary}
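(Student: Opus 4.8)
The plan is to combine Theorem~\ref{thm: first compression} with a restricted isometry guarantee for SORS matrices on the \emph{fixed} set $\S_{1,2}$, the only genuinely new ingredient being a complexity (covering number / Gaussian width) bound for $\S_{1,2}$. By Theorem~\ref{thm: first compression} it suffices to choose $\eps = \delta/(4d'r^d)$ and show that, with probability at least $1-\eta$, \emph{every} $A_i$ has the RIP$(\eps,\S_{1,2})$ property; since the $A_i$ are independent, a union bound reduces this to proving that a single SORS matrix $A \in \R^{m\times n^\kappa}$ satisfies RIP$(\eps,\S_{1,2})$ with probability at least $1-\eta/d'$. Note that $\eps^{-2} = 16(d/\kappa)^2 r^{2d}\delta^{-2}$, which already produces the prefactor $\delta^{-2} r^{2d} (d/\kappa)^2$ appearing in \eqref{m_first_bound_sors} once it is multiplied by the complexity of $\S_{1,2}$.

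For the single-matrix step I would invoke a standard RIP theorem for subsampled bounded-orthonormal (SORS) ensembles of the Krahmer--Mendelson--Rauhut / Oymak--Recht--Soltanolkotabi type, proved via suprema of chaos processes. For a set $T \subseteq \mathbb{S}^{n^\kappa-1}$ this guarantees RIP$(\eps,T)$ once $m \gtrsim \eps^{-2}\Delta^2\,\omega^2(T)\cdot(\text{log factors})$, where $\omega(T)$ is the Gaussian width of $T$ and the logarithmic factors are exactly of the self-referential form appearing in \eqref{log-factors}: one $\ln$ coming from the failure probability $\eta/d'$, one ambient-dimension $\ln$ involving $n^\kappa$, and a $\ln^2$ in $m$. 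Since $\Delta \le C'$ is assumed, the factor $\Delta^2$ is absorbed into the absolute constant $C_1$.

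The key computation is then the complexity of $\S_{1,2}$. For $\S_1$ I would build a net from $\kappa$-fold products of a $(t/\kappa)$-net of $\mathbb{S}^{n-1}$; because all factors are unit vectors, $\bigl\| \otimes_{i=1}^\kappa \mathbf{u}^i - \otimes_{i=1}^\kappa \mathbf{v}^i \bigr\|_2 \le \sum_{i=1}^\kappa \|\mathbf{u}^i - \mathbf{v}^i\|_2$, so such a product net covers $\S_1$ at scale $t$ with cardinality at most $(3\kappa/t)^{n\kappa}$, giving $\log N(\S_1,t) \lesssim n\kappa\log(\kappa/t)$. The set $\S_2$ consists of normalized sums of \emph{orthogonal} elements of $\S_1$, so $\|\mathbf{x}+\mathbf{y}\|_2 = \sqrt2$ is bounded away from $0$ and the normalization is Lipschitz on this regime; hence $\log N(\S_2,t)$ is at most twice the corresponding bound for $\S_1$, and $\log N(\S_{1,2},t) \lesssim n\kappa\log(\kappa/t)$ as well. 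Dudley's entropy integral then yields $\omega^2(\S_{1,2}) \lesssim n\kappa\log\kappa$. Substituting this width and $\eps^{-2}$ into the single-matrix requirement produces $m \gtrsim \delta^{-2} r^{2d}\frac{n d^2 \ln\kappa}{\kappa}\cdot L$ with $L$ as in \eqref{log-factors}, and the union bound over $i=1,\dots,d'$ delivers the claimed $1-\eta$ success probability.

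The main obstacle is twofold. First, one must correctly control the complexity of the composite set $\S_{1,2}$, and in particular the $\S_2$ piece, where orthogonality of $\mathbf{x},\mathbf{y}$ is precisely what keeps the normalization well-conditioned so that its covering number stays the same order as that of $\S_1$. Second, one must thread the precise logarithmic factors of the chosen chaos-process RIP bound through the substitution so that they assemble into the exact expression \eqref{log-factors}, including the self-referential $\ln^2 m$ term (which is why a function of $m$ itself appears inside $L$). Everything else is bookkeeping: choosing $\eps$ to match Theorem~\ref{thm: first compression} and distributing the failure probability $\eta$ across the $d'$ independent matrices.
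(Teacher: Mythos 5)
Your proposal is correct and follows essentially the same route as the paper: reduce to RIP$(\eps,\S_{1,2})$ for a single SORS matrix via Theorem~\ref{thm: first compression} with $\eps^{-2}\sim (d/\kappa)^2 r^{2d}\delta^{-2}$ and a union bound over the $d'$ maps with failure probability $\eta/d'$ each, then bound $\omega(\S_{1,2})\lesssim\sqrt{\kappa n\ln\kappa}$ by Dudley's integral using exactly the covering estimate you sketch (the paper's Lemma~\ref{lemma: cover}, proved by the same tensor-product-of-nets telescoping and the same orthogonality-based handling of $\S_2$), and feed this into the chaos-process SORS RIP bound (the paper's Theorem~\ref{thm: sup_chaos_sors}).
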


\begin{proof}
See Section~\ref{sec:ProofofTHm1}.
\end{proof}

To further improve embedding dimension of $m^{d'}$ provided by Corollaries  $\ref{cor:sub-gaussian model}$ and~\ref{cor:sors}, we can apply a secondary compression, analogous to the one used in \cite{iwen2019lower}, by letting
\begin{equation}\label{a final}
    \mathcal{A}_{2nd}(\mathcal{X})\coloneqq A_{\text{2nd}}(\vect(\mathcal{A}(\mathcal{R}(\mathcal{X}))),
\end{equation}
where $\vect$ is a vectorization operator.  In this case, we again wish to show that $\mathcal{A}_{2nd}$ satisfies $TRIP(\delta,\mathbf{r})$ for suitably chose parameters.  One of the key challenges in doing this is that,
 for any given $i$, the new factor vectors $\{A_{i}\accentset{\circ}{u}^{i}_{j_{i}}\}_{j_i=1}^{r^\kappa}$ defined as in \eqref{eqn: newfactors} are no longer orthogonal to one another.  Therefore \eqref{eqn: newfactors} is not an HOSVD decomposition of  $\mathcal{A}(\accentset{\circ}{\mathcal{X}}$), and the HOSVD rank of $\mathcal{A}(\accentset{\circ}{\mathcal{X}})$ might be much larger than the HOSVD rank of $\accentset{\circ}{\mathcal{X}}$.
However, one may overcome this difficulty by observing that, with high probability, $\mathcal{A}(\mathcal{R}(\mathcal{X}))$ will belong to the following set of nearly orthogonal tensors.

\begin{definition}[Nearly orthogonal tensors $\mathcal{B}_{R,\mu,\theta,\mathbf{r}}$]\label{def: fancy B}
Let $\mathcal{B}_{R,\mu,\theta,\mathbf{r}}$ be the set of d-mode tensors in $\mathcal{X} \in \R^{n \times \ldots \times n}$  that may be written in  standard form \eqref{eqn: HOSVD basis} 
such that 
\begin{enumerate}
    \item[(a)] $\|{\bf u}^i_{k_i}\|_2\leq R$  for all $i$ and $k_i$, 
    \item[(b)]  $|\langle {\bf u}^i_{k_i}, {\bf u}^i_{k'_i}\rangle| \leq \mu$ for all $k_i\neq k'_i$,
    \item[(c)] the core tensor $\mathcal{C} $ satisfies $\|\mathcal{C}\|_F = 1$,
    \item[(d)] $\mathcal{C}$ has orthogonal subtensors in 
    the sense that \eqref{eqn: orthogonal subtensors} holds for all $1\leq i\leq d$,
    \item[(e)] $\|\mathcal{X}\|_F\geq \theta$.
\end{enumerate}
 \end{definition}

Our next main result is the following theorem which shows that $\mathcal{A}_{2nd}$ satisfies $TRIP(\delta,\mathbf{r})$ for suitably chosen parameters.
 
\begin{theorem}\label{thm: second compression}
Let $r\geq 2$ and let $\mathbf{r}=(r,r,\ldots,r) \in \R^d$. Suppose that $\mathcal{A}$ and  $\mathcal{A}_{2nd}$ are defined as in \eqref{eqn: A} and \eqref{a final}. Let $d'=d/\kappa$ and assume that $A_i$ satisfies the  RIP$\left(\eps, \mathcal{S}_{1,2} \right)$ property  
for all $i=1,\ldots,d',$  where
$\delta = 12d'r^d \eps<1$. 
Assume that $A_{\text{2nd}}$ satisfies the  RIP$\left(\delta/3,\mathcal{B}_{1+\epsilon,\epsilon,1-\delta/3,\mathbf{r'}} \right)$, property where $\mathbf{r'}=(r^\kappa,\ldots,r^\kappa) \in \R^{d'}$.
Then,  $\mathcal{A}_{2nd}$ will satisfy the TRIP($\delta,\mathbf{r}$) property, i.e.,
\begin{equation*}
    (1-\delta)\|\mathcal{X}\|^2\leq \|\mathcal{A}_{2nd}(\mathcal{X})\|^2\leq     (1+\delta)\|\mathcal{X}\|^2
\end{equation*}
for all $\mathcal{X}$ with HOSVD rank at most $\mathbf{r}$.
\end{theorem}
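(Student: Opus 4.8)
The plan is to view $\mathcal{A}_{2nd} = A_{\text{2nd}} \circ \vect \circ \mathcal{A} \circ \mathcal{R}$ as a composition of two maps, each distorting squared norms by at most a factor $1\pm\delta/3$, and then to multiply the two error bounds. Since every inequality in \eqref{eqn: delta-r-TRIP} is homogeneous of degree two in $\mathcal{X}$, I may assume $\|\mathcal{X}\|=1$, so that the core $\mathcal{C}$ of $\mathcal{X}$ and its reshaped version $\accentset{\circ}{\mathcal{C}}$ both have unit Frobenius norm and $\|\mathcal{R}(\mathcal{X})\|=1$. Write $Y := \mathcal{A}(\mathcal{R}(\mathcal{X}))$. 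For the first (modewise) stage, the hypothesis $\delta = 12 d' r^d \eps$ gives $4 d' r^d \eps = \delta/3 < 1$, so Theorem~\ref{thm: first compression} applies verbatim with $\delta/3$ in place of $\delta$ and shows $\mathcal{A}\circ\mathcal{R}$ is a TRIP$(\delta/3,\mathbf{r})$ map. Hence $1-\delta/3 \le \|Y\|^2 \le 1+\delta/3$, and in particular $\|Y\| \ge \sqrt{1-\delta/3} \ge 1-\delta/3$, which is precisely the lower norm bound (e) in Definition~\ref{def: fancy B} for $\theta = 1-\delta/3$.

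The heart of the argument is to show that a suitably normalized $Y$ lies in $\mathcal{B}_{1+\eps,\eps,1-\delta/3,\mathbf{r}'}$, after which the assumed RIP of $A_{\text{2nd}}$ on this set closes the loop. The representation \eqref{eqn: newfactors} of $Y$, with factors $A_i\accentset{\circ}{\mathbf{u}}^i_{j_i}$ and core $\accentset{\circ}{\mathcal{C}}$, already explains the parameters $1+\eps$ and $\eps$. Since each $\accentset{\circ}{\mathbf{u}}^i_{j_i}\in S_1$, the RIP of $A_i$ on $S_1$ gives $\|A_i\accentset{\circ}{\mathbf{u}}^i_{j_i}\| \le \sqrt{1+\eps} \le 1+\eps$, which is condition (a) with $R=1+\eps$; and for $j_i\neq j_i'$ the orthonormal vectors $\accentset{\circ}{\mathbf{u}}^i_{j_i},\accentset{\circ}{\mathbf{u}}^i_{j_i'}\in S_1$ satisfy $(\accentset{\circ}{\mathbf{u}}^i_{j_i}\pm\accentset{\circ}{\mathbf{u}}^i_{j_i'})/\sqrt{2}\in\S_2$, so the polarization identity $\langle A_i\mathbf{x},A_i\mathbf{y}\rangle = \tfrac14\big(\|A_i(\mathbf{x}+\mathbf{y})\|^2-\|A_i(\mathbf{x}-\mathbf{y})\|^2\big)$ together with RIP on $\S_2$ bounds the inner product by $\eps$, giving condition (b) with $\mu=\eps$. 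Condition (c) reduces to $\|\accentset{\circ}{\mathcal{C}}\|=\|\mathcal{C}\|=1$, and (e) was established above.

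The one genuinely delicate point — and the step I expect to be the main obstacle — is condition (d), all-orthogonality of the core. The reshaped core $\accentset{\circ}{\mathcal{C}}$ need \emph{not} have orthogonal subtensors even when $\mathcal{C}$ does, since grouping $\kappa$ modes destroys the single-mode relations \eqref{eqn: orthogonal subtensors}; thus \eqref{eqn: newfactors} cannot be used as-is to verify (d). I would resolve this by passing to a genuine HOSVD of $Y$ itself: because each mode-$i$ unfolding of $Y = \accentset{\circ}{\mathcal{X}}\times_1 A_1\cdots\times_{d'}A_{d'}$ equals $A_i$ times the mode-$i$ unfolding of $\accentset{\circ}{\mathcal{X}}$, the map $\mathcal{A}$ cannot raise any modewise rank, so $Y$ has HOSVD rank at most $\mathbf{r}'$. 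Its HOSVD then furnishes orthonormal factors and an all-orthogonal core, which after normalization to unit core norm lies in $\mathcal{B}_{1+\eps,\eps,1-\delta/3,\mathbf{r}'}$ with room to spare: orthonormal factors trivially satisfy (a) and (b), all-orthogonality gives (d), normalization gives (c), and (e) holds since the normalized tensor has unit norm. The slack parameters $1+\eps,\eps$ are exactly what is needed to also accommodate the reshaped description directly, which is the more convenient parametrization when one constructs $A_{\text{2nd}}$ via a covering argument as in the corollaries.

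Finally I would compose the two bounds. Writing $Z := Y/\|Y\| \in \mathcal{B}_{1+\eps,\eps,1-\delta/3,\mathbf{r}'}$, the RIP of $A_{\text{2nd}}$ gives $1-\delta/3 \le \|A_{\text{2nd}}(\vect(Z))\|^2 \le 1+\delta/3$; multiplying by $\|Y\|^2\in[1-\delta/3,1+\delta/3]$ and using linearity yields $\|\mathcal{A}_{2nd}(\mathcal{X})\|^2 = \|Y\|^2\,\|A_{\text{2nd}}(\vect(Z))\|^2 \in \big[(1-\delta/3)^2,(1+\delta/3)^2\big]$. For $0<\delta<1$ one has $(1-\delta/3)^2 \ge 1-\tfrac23\delta \ge 1-\delta$ and $(1+\delta/3)^2 = 1+\tfrac23\delta+\tfrac{\delta^2}{9} \le 1+\delta$, which is the claimed TRIP$(\delta,\mathbf{r})$ bound once the normalization $\|\mathcal{X}\|=1$ is undone.
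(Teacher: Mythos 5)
Your proof is correct, and while it follows the paper's overall skeleton --- apply Theorem~\ref{thm: first compression} at level $\delta/3$ (using $4d'r^d\eps=\delta/3<1$), place the compressed tensor in $\mathcal{B}_{1+\epsilon,\epsilon,1-\delta/3,\mathbf{r}'}$, invoke the assumed RIP of $A_{\text{2nd}}$, and compose $(1\pm\delta/3)^2\subseteq[1-\delta,1+\delta]$ --- it departs from the paper exactly at the step you flagged as delicate, and your alternative there is genuinely different. The paper's Lemma~\ref{lem: AX in B} establishes membership in $\mathcal{B}_{1+\epsilon,\epsilon,1-\delta/3,{\bf r}'}$ using the representation \eqref{eqn: newfactors}, whose core is the reshaped core $\accentset{\circ}{\mathcal{C}}$, and simply asserts that properties (c) and (d) ``are preserved under the action of $\mathcal{A}$.'' Your suspicion about (d) is justified: reshaping does \emph{not} preserve all-orthogonality in general. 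For instance, with $d=4$, $\kappa=2$, $r=2$, the core $\mathcal{C}(k_1,k_2,k_3,k_4)=\tfrac{1}{2}\delta_{k_1k_2}\delta_{k_3k_4}$ is unit-norm and all-orthogonal in every mode, yet its reshaping is the rank-one $4\times 4$ matrix $\tfrac{1}{2}(1,0,0,1)^{T}(1,0,0,1)$, whose subtensors (rows) indexed by $(1,1)$ and $(2,2)$ have inner product $\tfrac{1}{2}\neq 0$. So the paper's verification of (d) has a real gap, which your route repairs.

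Your replacement --- pass to a genuine HOSVD of $Z=Y/\|Y\|$, which exists at rank $\mathbf{r}'$ because modewise multiplication cannot increase any unfolding rank (strictly, the mode-$i$ unfolding of $Y$ is $A_i$ times the unfolding of $\accentset{\circ}{\mathcal{X}}$ times a Kronecker factor on the right, which does not affect the rank bound) --- gives orthonormal factors and an all-orthogonal unit-norm core, so $Z\in\mathcal{B}_{1,0,1-\delta/3,\mathbf{r}'}\subseteq\mathcal{B}_{1+\epsilon,\epsilon,1-\delta/3,\mathbf{r}'}$, and rescaling by $\|Y\|^2\in[1-\delta/3,1+\delta/3]$ closes the argument. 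Two trade-offs are worth recording. First, your argument requires the normalization: the unnormalized $Y$ cannot be placed in the set via your representation, since absorbing the scalar $\|Y\|$ (which may be as large as $\sqrt{1+4d'r^d\eps}$) into either the core or the factor columns need not respect (c) or the bound $R=1+\eps$ in (a); you handle this correctly by applying the RIP hypothesis to $Z$ and using linearity of $A_{\text{2nd}}$. Second, your argument shows the theorem really only needs RIP of $A_{\text{2nd}}$ on the smaller set of unit-norm rank-$\mathbf{r}'$ tensors; the nearly-orthogonal parameters $1+\eps,\eps$ matter only if one insists, as the paper does, on working with the explicit factors $A_i\accentset{\circ}{\mathbf{u}}^i_{j_i}$, which is the parametrization the paper's covering-number corollaries are built around.
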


\begin{proof}
See Section~\ref{sec: proof of second compression}.
\end{proof}

The following two corollaries show that we may choose the matrices $A_i$ and $A_\text{2nd}$ to be either sub-gaussian or SORS matrices. We also note that it is possible to produce other variants of these corollaries where, for example, one takes each $A_i$ to be sub-gaussian and lets $A_\text{2nd}$ be a SORS matrix. 

\begin{corollary}
\label{thm: second compression Gaussian}
Let $r\geq 2$ and let $\mathbf{r}=(r,r,\ldots,r)  \in \R^d$. Suppose that $\mathcal{A}$ and  $\mathcal{A}_{2nd}$ are defined as in \eqref{eqn: A} and \eqref{a final}, and that  all of the $A_i \in \R^{m \times n^\kappa}$ have i.i.d. sub-gaussian entries for all $i = 1, \ldots, d'$, where $d'=d/\kappa$, and suppose that $0<\eta, \delta<1.$  Let
\begin{equation}\label{m_first_bound_two_step}
m \ge C \delta^{-2}r^{2d} \max\left\{\frac{n d^2\ln(\kappa)}{\kappa}, \frac{d^2}{\kappa^2} \ln\left(\frac{2d}{\kappa \eta}\right)\right\},
\end{equation}
and let $A_{\text{2nd}} \in \R^{m_{\text{2nd}} \times m}$ also be a sub-gaussian matrix with i.i.d. entries with
\begin{equation}\label{m_second_bound_two_step}
m_{\text{2nd}}\geq C\delta^{-2}\max\left\{ \left(\frac{r^d \kappa + dmr^\kappa}{\kappa} \right)\ln\left(\frac{d}{\kappa}+1\right) + {\frac{dmr^\kappa}{\kappa}}\ln \left(1+\delta r^d\right) + {\frac{d^2mr^\kappa\delta}{ \kappa^{2}}},\ln\left(\frac{2}{\eta}\right)\right\}.
\end{equation}
Then, $\mathcal{A}_{2nd}$ satisfies the TRIP($\delta,\mathbf{r}$) property, i.e.,
\begin{equation*}
    (1-\delta)\|\mathcal{X}\|^2\leq \|\mathcal{A}_{2nd}(\mathcal{X})\|^2\leq     (1+\delta)\|\mathcal{X}\|^2, 
\end{equation*}
for all $\mathcal{X}$ with HOSVD rank at most $\mathbf{r}$ with probability at least $1 - \eta$.
\end{corollary}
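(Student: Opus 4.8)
The plan is to instantiate Theorem~\ref{thm: second compression} by verifying, with high probability, its two RIP hypotheses for the specified sub-gaussian matrices, and then to union bound the two failure events. Accordingly, set $\eps = \delta/(12 d' r^d)$ so that the relation $\delta = 12 d' r^d \eps < 1$ demanded by Theorem~\ref{thm: second compression} holds. The proof then splits into a bound on the first-stage matrices $A_1,\ldots,A_{d'}$ and a bound on the second-stage matrix $A_{\text{2nd}}$.

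For the first stage, I would reuse the argument behind Corollary~\ref{cor:sub-gaussian model} essentially verbatim: each $A_i \in \R^{m\times n^\kappa}$ must satisfy $\mathrm{RIP}(\eps, \S_{1,2})$. Since $A_i$ is sub-gaussian, this follows from the standard sub-gaussian concentration/RIP theorem applied to the set $\S_{1,2} = \S_1 \cup \S_2$, whose metric entropy is controlled by covering $\S_1$ (Kronecker products of $\kappa$ unit vectors, giving $\log \mathcal{N}(\S_1, t) \lesssim \kappa n \ln(1/t)$) and noting that $\S_2$, built from orthogonal pairs in $\S_1$, admits a comparable bound. Substituting $\eps = \delta/(12 d' r^d)$ and $d' = d/\kappa$, and taking a union bound over the $d'$ matrices, produces exactly the requirement~\eqref{m_first_bound_two_step} (identical to~\eqref{m_first_bound} up to the constant and the replacement $\eta \mapsto \eta/2$), and guarantees that all $A_i$ satisfy $\mathrm{RIP}(\eps,\S_{1,2})$ simultaneously except on an event of probability at most $\eta/2$.

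For the second stage, I would show that the sub-gaussian matrix $A_{\text{2nd}}$, acting on $\vect(\mathcal{A}(\mathcal{R}(\mathcal{X}))) \in \R^{m^{d'}}$, satisfies $\mathrm{RIP}(\delta/3, \mathcal{B}_{1+\eps,\eps,1-\delta/3,\mathbf{r'}})$ with probability at least $1-\eta/2$. By the standard sub-gaussian RIP-on-a-set theorem it suffices that $m_{\text{2nd}} \gtrsim (\delta/3)^{-2}\big(\log \mathcal{N}(\mathcal{B}_{1+\eps,\eps,1-\delta/3,\mathbf{r'}}, c\delta) + \ln(2/\eta)\big)$; the lower norm bound $\theta = 1-\delta/3$ from condition~(e) of Definition~\ref{def: fancy B} keeps all elements bounded away from $0$, so the result transfers to the cone generated by $\mathcal{B}$ by homogeneity. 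The crux is the covering number of $\mathcal{B}_{1+\eps,\eps,1-\delta/3,\mathbf{r'}}$: I would parametrize each element by its $d'$ factor matrices $U^i \in \R^{m\times r^\kappa}$ (columns of norm $\le 1+\eps$) and its core $\accentset{\circ}{\mathcal{C}} \in \R^{r^\kappa\times\cdots\times r^\kappa}$ of unit Frobenius norm, giving roughly $d'mr^\kappa$ parameters for the factors and $r^d$ for the core. A Lipschitz estimate for the multilinear map sending these parameters to the (vectorized) tensor then converts a net for the parameters into a net for $\mathcal{B}$; tracking the norm bound $1+\eps$, the near-orthogonality parameter $\eps$, and the $d'$-fold product structure yields a metric-entropy bound whose leading term is $(r^d + d'mr^\kappa)$ times logarithmic factors, which upon substitution reproduces the three summands in the first branch of~\eqref{m_second_bound_two_step}, with $\ln(2/\eta)$ supplying the second branch.

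Combining the two stages on the intersection of the good events — of probability at least $1-\eta$ by the union bound — both hypotheses of Theorem~\ref{thm: second compression} are met, so $\mathcal{A}_{2nd}$ satisfies $\mathrm{TRIP}(\delta,\mathbf{r})$, as claimed. I expect the main obstacle to be the covering-number bound for $\mathcal{B}_{1+\eps,\eps,1-\delta/3,\mathbf{r'}}$ and, in particular, matching it to the exact form of~\eqref{m_second_bound_two_step}: because the tensor depends multilinearly on the $d'$ factor matrices, perturbing one factor scales with the norms of the others, so the Lipschitz constant of the parametrization inflates with the number of modes and with $R = 1+\eps$. Controlling this carefully — and verifying that conditions~(b)--(d) of Definition~\ref{def: fancy B} (near-orthogonal factors, unit-norm core with orthogonal subtensors) do not increase the parameter count beyond the stated $r^d + d'mr^\kappa$ — is where the delicate $\ln(1+\delta r^d)$ and $\delta$-linear terms in~\eqref{m_second_bound_two_step} are generated.
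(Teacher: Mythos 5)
Your proposal follows essentially the same route as the paper's own proof: fix $\eps = \delta/(12 d' r^d)$, use the argument of Corollary~\ref{cor:sub-gaussian model} (with $\eta/2$ in place of $\eta$) to get RIP$(\eps,\S_{1,2})$ for all $A_i$ from \eqref{m_first_bound_two_step}, verify RIP$\left(\delta/3,\mathcal{B}_{1+\eps,\eps,1-\delta/3,\mathbf{r'}}\right)$ for $A_{\text{2nd}}\circ\vect$ from \eqref{m_second_bound_two_step} via a covering-number bound for the nearly orthogonal tensors obtained by parametrizing cores and factor matrices (the paper's Lemma~\ref{lem: AX in B} together with Lemma~\ref{lem: CoverfancyB1}), and finish with a union bound and Theorem~\ref{thm: second compression}. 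The only cosmetic difference is that the paper feeds the covering estimate into Proposition~\ref{thm: sup_chaos} through the Dudley-type entropy integral of Remark~\ref{dudley_b_estimate} (using that \eqref{m_first_bound_two_step} forces $m\ge r^{d-1}$), rather than through a single-scale net at resolution $c\delta$; since the entropy of this set is logarithmic in $1/t$ at every scale, both routes produce \eqref{m_second_bound_two_step}.
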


\begin{proof}
See Section~\ref{sec: proof of second compression}.
\end{proof}
\begin{remark}
Note that applying the reshaping operator (with $\kappa>1$) is necessary in order for us to actually achieve dimension reduction in the first step. Indeed, if  $\kappa=1,$ then \eqref{m_first_bound_two_step} requires $m>n^\kappa$. We also note that when other parameters are held fixed, the final dimension $m_{2nd}$ will be required to be $\mathcal{O}(n)$, $\mathcal{O}(\delta^{-2})$, $\mathcal{O}(\ln(\eta^{-1}))$ or $\mathcal{O}(r^{2d})$. While the dependence  on the number modes $d$ is exponential, we are primarily interested in cases where $n$ is large in comparison to the rank or the number of modes. In this case, the terms involving $n$ will dominate the terms involving $r^d$.  In Section~\ref{sec: experiments}, we will show that TRIP-dependent tensor recovery methods (e.g., tensor iterative hard thresholding, discussed in Section~\ref{sec: IHT}), successfully work for $d =4$ and $\kappa = 2$. 

In \cite{rauhut2017low}, the author considered i.i.d.\@ sub-gaussian measurements applied to the vectorizations of low-rank tensors and proved that the $TRIP(\delta, \textbf{r})$ property will hold with probability at least $1 - \eta$ if the target dimension satisfies
$$
m_{final} \ge C\delta^{-2} \max\{(r^{d} + dnr) \ln d, \ln(\eta^{-1})\}.
$$
We note this bound has the same computational complexity as ours with respect to $n$, $\delta$, and $\eta$. While their result has much better dependence on $r$, here, we are primarily interested in high-dimensional, low-rank tensors and therefore are primarily concerned with the dependence on $n$.  
\end{remark}

\begin{corollary}

\label{thm: second compression Fourier}
Let $r\geq 2$ and let $\mathbf{r}=(r,r,\ldots,r)$. Suppose that $\mathcal{A}$ and  $\mathcal{A}_{2nd}$ are defined as in \eqref{eqn: A} and \eqref{a final}, and that all of the $A_i \in \R^{m \times n^\kappa}$ are SORS matrices (as per Definition~\ref{SORSdef}) for all $i = 1, \ldots, d'$, where $d'=d/\kappa$. Furthermore, suppose that $0<\eta, \delta<1$ and, as in  \eqref{m_first_bound_sors}, let
\begin{equation}\label{eqn: m_big_first_fouri_ber}
m\geq C_1\delta^{-2}r^{2d}\frac{n d^2\ln(\kappa)}{\kappa} \cdot L, \text{ where } L \text{ is defined by \eqref{log-factors}}.
\end{equation}
Next, let $A_{\text{2nd}} \in \R^{m_{\text{2nd}} \times m}$ also be a SORS matrix with
\begin{equation}\label{eqn: m2_big_Fourier}
m_{\text{2nd}}\geq C\delta^{-2}\left[\frac{r^d \kappa + dmr^\kappa}{\kappa}\ln\left(\frac{d}{\kappa}+1\right) + {\frac{dmr^\kappa}{\kappa}}\ln \left(1+\delta r^d\right) + {\frac{d^2mr^\kappa\delta}{ \kappa^{2}}}\right]\cdot \tilde{L},
\end{equation}
where
$$
\tilde{L} = \ln^2\left(\frac{c_1}{\delta^{2}}\left(\ln\frac{4}{\eta}\right)\left[\frac{r^d \kappa + dmr^\kappa}{\kappa}\ln\left(\frac{d}{\kappa}+1\right) + {\frac{dmr^\kappa}{\kappa}}\ln \left(1+\delta r^d\right) + {\frac{d^2mr^\kappa\delta}{ \kappa^{2}}}\right]\right)\ln\left(\frac{4}{\eta}\right)\ln\left(\frac{4em}{\eta}\right).
$$
Then, $\mathcal{A}_{2nd}$ satisfies the TRIP($\delta,\mathbf{r}$) property, i.e.,
\begin{equation*}
    (1-\delta)\|\mathcal{X}\|^2\leq \|\mathcal{A}_{2nd}(\mathcal{X})\|^2\leq     (1+\delta)\|\mathcal{X}\|^2 
\end{equation*}
holds for all $\mathcal{X}$ with HOSVD rank at most $\mathbf{r}$ with probability at least $1 - \eta$.
\end{corollary}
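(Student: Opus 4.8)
The plan is to verify the two hypotheses of Theorem~\ref{thm: second compression} and then invoke that theorem directly. Both hypotheses are RIP statements on specific sets, so the task reduces to showing that each holds with high probability for the relevant SORS matrix, after which a union bound and Theorem~\ref{thm: second compression} close the argument.

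First I would dispatch the first-stage matrices $A_i$. The bound \eqref{eqn: m_big_first_fouri_ber} on $m$ is, up to the absolute constant $C_1$, identical to the bound \eqref{m_first_bound_sors} in Corollary~\ref{cor:sors}; the only change is that Theorem~\ref{thm: second compression} requires $\delta = 12 d' r^d \eps$ in place of $\delta = 4 d' r^d \eps$, which merely rescales $\eps$ by a constant factor and is absorbed into $C_1$ (since $m$ scales like $\eps^{-2}$). Thus the argument proving Corollary~\ref{cor:sors} shows that each SORS matrix $A_i$ satisfies RIP$(\eps, \S_{1,2})$ with $\eps = \delta/(12 d' r^d)$, each with failure probability at most $\eta/(2d')$; a union bound over $i = 1, \dots, d'$ gives all of them simultaneously with probability at least $1 - \eta/2$.

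The substantive step is to show that a single SORS matrix $A_{\text{2nd}}$ satisfies RIP$(\delta/3, \mathcal{B}_{1+\eps, \eps, 1-\delta/3, \mathbf{r'}})$ with probability at least $1 - \eta/2$ under \eqref{eqn: m2_big_Fourier}. The approach is to control the complexity of the set $\mathcal{B}_{1+\eps, \eps, 1-\delta/3, \mathbf{r'}}$ through its covering numbers and then feed these into an RIP guarantee for subsampled bounded orthonormal systems. A tensor in this set is determined by a core tensor $\accentset{\circ}{\mathcal{C}}$ with $(r^\kappa)^{d'} = r^d$ entries in the unit Frobenius ball, together with $d'$ factor matrices, each having $r^\kappa$ columns in a bounded subset of $\R^m$; hence the set has effective dimension of order $r^d + d' r^\kappa m = (r^d \kappa + d m r^\kappa)/\kappa$, which is exactly the bracketed quantity in \eqref{eqn: m2_big_Fourier}. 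This metric-entropy estimate is essentially the same one underlying the sub-gaussian Corollary~\ref{thm: second compression Gaussian}; the difference is that, for a SORS (bounded orthonormal) matrix, converting a covering-number bound into an RIP guarantee costs an extra polylogarithmic factor, precisely the factor $\tilde L$ in \eqref{eqn: m2_big_Fourier}, whose $\ln^2(\cdot)\,\ln(4/\eta)\,\ln(4em/\eta)$ shape is characteristic of chaos-process and Dudley-type chaining bounds for structured random matrices.

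The main obstacle is this second step: unlike the sparse-vector setting, where the RIP is an isometry over a union of low-dimensional subspaces, the set $\mathcal{B}_{1+\eps, \eps, 1-\delta/3, \mathbf{r'}}$ is a genuinely curved set cut out by the near-orthogonality constraints on the factor vectors ($R = 1+\eps$, $\mu = \eps$), the orthogonal-subtensor constraints on the core, and the lower norm bound $\theta = 1-\delta/3$. The difficulty is to build a covering whose entropy simultaneously captures the continuous parameters (factor vectors and core entries) while respecting these constraints, and then to run the chaining argument for the SORS matrix through it so that the resulting distortion stays within $\delta/3$, as Theorem~\ref{thm: second compression} demands. Once both RIP events hold, a final union bound yields total failure probability at most $\eta$, and Theorem~\ref{thm: second compression} delivers the claimed TRIP$(\delta, \mathbf{r})$ property.
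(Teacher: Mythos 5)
Your proposal is correct and follows essentially the same route as the paper: verify the two RIP hypotheses of Theorem~\ref{thm: second compression} --- the first by repeating the Corollary~\ref{cor:sors} argument with $\eps = \delta/(12 d' r^d)$ and a union bound over the $A_i$, the second by feeding the metric entropy of $\mathcal{B}_{1+\eps,\eps,1-\delta/3,\mathbf{r'}}$ (the paper's Lemma~\ref{lem: AX in B} and Remark~\ref{dudley_b_estimate}) into the SORS RIP guarantee of Theorem~\ref{thm: sup_chaos_sors} --- and then conclude via a union bound and Theorem~\ref{thm: second compression}. The covering/chaining work you flag as the ``main obstacle'' is exactly what the paper's pre-established lemmas supply, the only detail you leave implicit being that \eqref{eqn: m_big_first_fouri_ber} forces $m \geq r^{d-1}$, which is the condition needed to invoke that entropy estimate.
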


\begin{proof}
See Section~\ref{sec: proof of second compression}.
\end{proof}

\begin{remark}
Similar to the sub-gaussian case, we note that reshaping (with $\kappa>1$) is needed in order for us to achieve dimension reduction in the first compression. We also note that the final dimension is $\mathcal{O}(n \polylog(n))$, $\mathcal{O}(\delta^{-2}\polylog(\delta^{-2}))$, $\polylog(\eta^{-1})$ and $\mathcal{O}(r^{2d} \polylog(r))$. 
\end{remark}

\section{Low-Rank Tensor Recovery}
\label{sec: IHT}

Low-rank tensor recovery is the task of recovering a low-rank (or approximately low-rank) tensor from a comparatively small number of possibly noisy linear measurements. 
This problem serves as a nice motivating example of where the use of modewise maps with the TRIP$(\delta,\mathbf{r})$ property can help alleviate the burdensome storage requirements of maps which require vectorization.  Indeed, when the goal is compression, storing very large maps in memory as required by vectorization-based approaches is  counterintuitive and often infeasible. 

In the two-mode (matrix) case, the question of low-rank recovery from a small number of linear measurements is now well-known
to be possible under various model assumptions on the measurements  \cite{candes2009exact, candes2010tight, recht2010guaranteed}. One of the standard approaches is so-called nuclear-norm minimization:
\begin{equation*}
\hat{\X} = \argmin_{\X\in\R^{n_1\times n_2}} \|\X\|_* \quad\text{subject to}\quad \mathcal{L}(\X) = y.
\end{equation*}
Since the nuclear norm is defined to be the sum of the singular values, it serves as a fairly good, computationally feasible proxy for rank. 
As in classical compressed sensing, an alternative to optimization-based reconstruction is the use of iterative solvers.  One such approach is the Iterative Hard Thresholding (IHT) method \cite{blumensath2009iterative,blumensath2010normalized,tanner2013normalized} that finds a solution via the alternating updates
\begin{equation}\label{eq:IHT_Y}
\begin{aligned}
&\Y^{j} = \X^j + \mu_j\mathcal{L}^*(y - \mathcal{L}(\X^j)), \\
&\X^{j+1} = \mathcal{H}_{\mathbf{r}}\left(\Y^{j}\right).
\end{aligned}
\end{equation}
where $\X^0$ is initiated randomly. Here, $\mathcal{L}^*$ denotes the adjoint of the operator $\mathcal{L}$, and the function $\mathcal{H}_{\mathbf{r}}$ is a thresholding operator, which returns the closest rank $\mathbf{r}$ matrix.   Results for IHT prove that sparse or low-rank recovery is guaranteed when the measurement operator $\mathcal{L}$ satisfies various properties.  For example, in the case of sparse vector recovery, the restricted isometry property is enough to guarantee accurate reconstruction \cite{blumensath2009iterative}.  In the low-rank matrix case, measurements can be taken to be Gaussian \cite{carpentier2018iterative}, or satisfy various analogues of the restricted isometry property \cite{tanner2013normalized,blanchard2015cgiht,vu2019accelerating}.  In what follows, for the sake of simplicity, we will focus on the case where $\mu_j=1,$  which is referred to as Classical IHT.    However, our results can also be extended to Normalized TIHT where the step size $\mu_j$ takes a different value at each step. (See \cite{rauhut2017low} and the references provided there.) 

The iterative hard thresholding method has been extended to the tensor case (\cite{goulart2015iterative,rauhut2017low, grotheer2019iterative}). In this problem, one aims to recover an unknown tensor $\mathcal{X}\in\mathbb{R}^{n_1\times \ldots\times n_d}$ with e.g., HOSVD rank $\mathbf{r}=(r,\ldots,r)$, where $r \ll \min n_i$, from linear measurements of the form $\mathbf{y}=\mathcal{L}(\mathcal{X})+\mathbf{e}$, where $\mathcal{L}$ is a linear map from $\mathbb{R}^{n_1\times\ldots\times n_d}\rightarrow \mathbb{C}^{m}$, with $m\ll \prod_i n_i,$ and $\mathbf{e}$ is an arbitrary noise vector. The iteration update is given by the same updates as \eqref{eq:IHT_Y}. The primary difference with the matrix case is in the thresholding operator $\mathcal{H}_{\mathbf{r}}$ that approximately computes the best rank $\mathbf{r}$ approximation of a given tensor. Unfortunately, exactly computing the best rank $\mathbf{r}$ approximation of a general tensor is NP-hard. However, it is possible to construct an operator $\mathcal{H}_{\mathbf{r}}$ in  a way such that 
\begin{equation}\label{eq: hr_construction}
    \|\mathcal{Z}-\mathcal{H}_\mathbf{r}(\mathcal{Z})\|_F\leq C\sqrt{d}\|\mathcal{Z}-\mathcal{Z}_{\text{BEST}}\|_F,
\end{equation}
where $\mathcal{Z}_{\text{BEST}} \in \mathbb{R}^{n_1\times\ldots\times n_d}$ is the true best rank $\mathbf{r}$ approximation of $\mathcal{Z} \in \mathbb{R}^{n_1\times\ldots\times n_d}$. (For details, please see \cite{rauhut2017low} and the references therein.) For the rest of this section, we will always assume that $\mathcal{H}_{\mathbf{r}}$ is constructed in a way to satisfy \eqref{eq: hr_construction}. 

The following theorem is the main result of \cite{rauhut2017low}. 
It guarantees accurate reconstruction via TIHT guarantee when the measurement operator satisfies the TRIP$(\delta,3\mathbf{r})$ property for a sufficiently small $\delta$.
 Unfortunately, the condition \eqref{eqn: cant check this}, required by this theorem, is a bit stronger than 
\eqref{eq: hr_construction}, which is guaranteed to hold. As noted in \cite{rauhut2017low}, getting rid of the condition \eqref{eqn: cant check this} appears to be difficult if not impossible. That said,  \eqref{eq: hr_construction} is a worst-case estimate, and $\mathcal{H}_\mathbf{r}$ typically returns much better estimates. Moreover, numerical experiments show that the TIHT algorithm works well in practice, and indeed the condition \eqref{eqn: cant check this} does often hold, especially in early iterations of the algorithm. 
\begin{theorem}[\cite{rauhut2017low}, Theorem 1]\label{thm: rauhut_iht}
Let $0<a<1$, let $\mathcal{L}:\mathbb{R}^{n_1\times\ldots\times n_d}\rightarrow \mathbb{R}^{m}$ satisfy TRIP$(\delta,3\mathbf{r})$  with $\delta<a/4$ for some $a \in (0,1)$, and let  $\mathcal{X}^j$ and $\mathcal{Y}^j$ be defined as in \eqref{eq:IHT_Y}. 
We assume that 
\begin{equation}\label{eqn: cant check this}
    \|\mathcal{Y}^{j}-\mathcal{X}^{j+1}\|_F\leq (1+\xi_a)\|\mathcal{Y}^j-\mathcal{X}\|_F,\quad\text{ where }\quad \xi_a =\frac{a^2}{17(1+\sqrt{1+\delta_{3\mathbf{r}}})\|\mathcal{L}\|_{2\rightarrow 2}}.
\end{equation}
Suppose $\mathbf{y}=\mathcal{L}({\mathcal{X}})+\mathbf{e},$ where $\mathbf{e}\in\mathbb{R}^m$ is an arbitrary noise vector.
Then 
\begin{equation*}
    \|\mathcal{X}^{j+1}-\mathcal{X}\|_F\leq a^j\|\mathcal{X}^0-\mathcal{X}\|_F+\frac{b_a}{1-a}\|\mathbf{e}\|_2,
\end{equation*}
where $b_a = 2\sqrt{1+\delta}+\sqrt{4\xi_a + 2\xi_a^2}\|\mathcal{L}\|_{2\rightarrow2}.$ \end{theorem}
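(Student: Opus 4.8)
Since this theorem is quoted verbatim from \cite{rauhut2017low}, the plan is to reproduce the standard iterative-hard-thresholding convergence argument adapted to the HOSVD setting. The backbone is a single-step contraction: I would first establish that for every $j$,
\[
\|\mathcal{X}^{j+1}-\mathcal{X}\|_F \;\le\; a\,\|\mathcal{X}^{j}-\mathcal{X}\|_F \;+\; b_a\,\|\mathbf{e}\|_2 ,
\]
and then unroll this recursion, summing the geometric series $\sum_{k\ge 0}a^k=(1-a)^{-1}$, to arrive at the claimed global bound.

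For the single-step estimate I would expand the gradient update in \eqref{eq:IHT_Y} with $\mu_j=1$ and $\mathbf{y}=\mathcal{L}(\mathcal{X})+\mathbf{e}$, obtaining
\[
\mathcal{Y}^{j}-\mathcal{X} \;=\; (I-\mathcal{L}^*\mathcal{L})(\mathcal{X}^{j}-\mathcal{X}) \;+\; \mathcal{L}^*\mathbf{e}.
\]
Because $\mathcal{X}$, $\mathcal{X}^{j}$ and $\mathcal{X}^{j+1}$ each have HOSVD rank at most $\mathbf{r}$, their mode subspaces jointly span a structure of rank at most $3\mathbf{r}$; writing $P_T$ for the orthogonal projection onto it, both $\mathcal{X}^{j}-\mathcal{X}$ and $\mathcal{X}^{j+1}-\mathcal{X}$ lie in $T$. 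I would then apply the bilinear form of $TRIP(\delta,3\mathbf{r})$ (Definition~\ref{def: trip}), namely $|\langle (I-\mathcal{L}^*\mathcal{L})\mathcal{U},\mathcal{V}\rangle|\le\delta\|\mathcal{U}\|_F\|\mathcal{V}\|_F$ for $\mathcal{U},\mathcal{V}$ of combined rank $3\mathbf{r}$, to control the signal contribution by a multiplier of order $\delta$, and bound the noise contribution through $|\langle\mathcal{L}^*\mathbf{e},\mathcal{Z}\rangle|=|\langle\mathbf{e},\mathcal{L}\mathcal{Z}\rangle|\le\sqrt{1+\delta}\,\|\mathbf{e}\|_2\|\mathcal{Z}\|_F$. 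This is precisely the step that forces the hypothesis to be stated at rank $3\mathbf{r}$, and the hypothesis $\delta<a/4$ is exactly what keeps the signal multiplier below $a$.

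The genuinely delicate ingredient is the inexactness of the thresholding operator $\mathcal{H}_{\mathbf{r}}$. Since $\mathcal{X}^{j+1}=\mathcal{H}_{\mathbf{r}}(\mathcal{Y}^j)$ need not be the true best rank-$\mathbf{r}$ approximation, I would invoke the hypothesis \eqref{eqn: cant check this}: decomposing $\|\mathcal{X}^{j+1}-\mathcal{Y}^j\|_F^2=\|\mathcal{X}^{j+1}-P_T\mathcal{Y}^j\|_F^2+\|(I-P_T)\mathcal{Y}^j\|_F^2$ and analogously for $\mathcal{X}$, the bound $\|\mathcal{Y}^{j}-\mathcal{X}^{j+1}\|_F\le(1+\xi_a)\|\mathcal{Y}^j-\mathcal{X}\|_F$ collapses to a near-optimality statement on $T$ together with an excess term of order $\big((1+\xi_a)^2-1\big)\|(I-P_T)\mathcal{Y}^j\|_F^2$. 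Estimating $\|(I-P_T)\mathcal{Y}^j\|_F\le\|\mathcal{Y}^j\|_F$ and exploiting the precise definition of $\xi_a$---whose denominator carries a factor $\|\mathcal{L}\|_{2\rightarrow 2}$ specifically to cancel the operator norm produced here---lets this slack be absorbed into the noise coefficient $b_a=2\sqrt{1+\delta}+\sqrt{4\xi_a+2\xi_a^2}\,\|\mathcal{L}\|_{2\rightarrow 2}$. I expect this to be the main obstacle: tracking how the $\xi_a$-slack propagates so that the contraction constant emerges as \emph{exactly} $a$ and the noise multiplier as \emph{exactly} $b_a$. This bookkeeping is the entire reason the somewhat unnatural condition \eqref{eqn: cant check this} is imposed in place of the always-available worst-case guarantee \eqref{eq: hr_construction}.
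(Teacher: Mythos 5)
You should note at the outset that the paper itself contains no proof of this statement: it is imported verbatim from \cite{rauhut2017low} (Theorem 1 there), so your attempt must be measured against that reference's argument. Your skeleton matches it: prove the one-step estimate $\|\mathcal{X}^{j+1}-\mathcal{X}\|_F\le a\|\mathcal{X}^j-\mathcal{X}\|_F+b_a\|\mathbf{e}\|_2$ and unroll the geometric series; expand $\mathcal{Y}^j-\mathcal{X}=(I-\mathcal{L}^*\mathcal{L})(\mathcal{X}^j-\mathcal{X})+\mathcal{L}^*\mathbf{e}$; control the signal term by the polarized (bilinear) form of TRIP at rank $3\mathbf{r}$ --- which is indeed what forces the hypothesis to be stated at $3\mathbf{r}$ --- and the noise term via $|\langle\mathcal{L}(\mathcal{X}^{j+1}-\mathcal{X}),\mathbf{e}\rangle|\le\sqrt{1+\delta}\,\|\mathcal{X}^{j+1}-\mathcal{X}\|_F\|\mathbf{e}\|_2$. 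Your projection $P_T$ onto the tensor product of the joint mode subspaces of $\mathcal{X},\mathcal{X}^j,\mathcal{X}^{j+1}$ is also a legitimate device (every element of that subspace has HOSVD rank at most $3\mathbf{r}$), and recasting \eqref{eqn: cant check this} as near-optimality over $T$ plus an excess term $\bigl((1+\xi_a)^2-1\bigr)\|(I-P_T)\mathcal{Y}^j\|_F^2$ is a fine variant of the reference's direct expansion $\|\mathcal{X}^{j+1}-\mathcal{X}\|_F^2\le 2\langle\mathcal{X}^{j+1}-\mathcal{X},\mathcal{Y}^j-\mathcal{X}\rangle+\bigl((1+\xi_a)^2-1\bigr)\|\mathcal{Y}^j-\mathcal{X}\|_F^2$.

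The gap is in the step you yourself flagged as delicate. The estimate $\|(I-P_T)\mathcal{Y}^j\|_F\le\|\mathcal{Y}^j\|_F$ is true but useless: $\|\mathcal{Y}^j\|_F$ is signal-sized, not error-sized. Take $\mathbf{e}=0$ and $\mathcal{X}^j$ close to $\mathcal{X}$; then $\mathcal{Y}^j\approx\mathcal{X}$, and your excess term is of order $\sqrt{2\xi_a}\,\|\mathcal{X}\|_F$ --- a constant offset that can be absorbed neither into $a\|\mathcal{X}^j-\mathcal{X}\|_F$ nor into $b_a\|\mathbf{e}\|_2=0$, so the recursion never contracts below roughly $\sqrt{\xi_a}\,\|\mathcal{X}\|_F$ and the theorem does not follow. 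Moreover, no factor of $\|\mathcal{L}\|_{2\rightarrow 2}$ is ``produced'' by that crude bound, so the cancellation you invoke against the denominator of $\xi_a$ has nothing to act on. The correct treatment: since $\mathcal{X}\in T$, one has $(I-P_T)\mathcal{Y}^j=(I-P_T)(\mathcal{Y}^j-\mathcal{X})$, hence
\begin{equation*}
\|(I-P_T)\mathcal{Y}^j\|_F\le\|\mathcal{Y}^j-\mathcal{X}\|_F\le\|(I-\mathcal{L}^*\mathcal{L})(\mathcal{X}^j-\mathcal{X})\|_F+\|\mathcal{L}^*\mathbf{e}\|_F\le\left(1+\sqrt{1+\delta}\,\|\mathcal{L}\|_{2\rightarrow2}\right)\|\mathcal{X}^j-\mathcal{X}\|_F+\|\mathcal{L}\|_{2\rightarrow2}\|\mathbf{e}\|_2,
\end{equation*}
where the operator norm enters precisely because $(I-\mathcal{L}^*\mathcal{L})(\mathcal{X}^j-\mathcal{X})$ has no low-rank structure and TRIP cannot be applied to it. It is \emph{this} $\|\mathcal{L}\|_{2\rightarrow2}$ that the denominator of $\xi_a$ is built to neutralize: $\sqrt{2\xi_a+\xi_a^2}\,\bigl(1+\sqrt{1+\delta}\,\|\mathcal{L}\|_{2\rightarrow2}\bigr)\lesssim a/2$, so the signal part of the excess combines with $2\delta<a/2$ to produce the contraction factor $a$, while its noise part contributes the $\sqrt{4\xi_a+2\xi_a^2}\,\|\mathcal{L}\|_{2\rightarrow2}\|\mathbf{e}\|_2$ term in $b_a$. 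With that replacement, your outline goes through.
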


Theorem \ref{thm: rauhut_iht} shows that that low-rank tensor recovery is possible when the measurements satisfy the TRIP$(\delta,3\mathbf{r})$ property. In \cite{rauhut2017low}, the authors also show it is possible to randomly construct maps which satisfy this property with high probability. Unfortunately, these maps require first vectorizing the input tensor into a $n^d$-dimensional vector and then multiplying by an $m\times n^d$ matrix. This greatly limits the practical use of such maps since  this matrix  requires more memory than the original tensor. Thus, our results here for modewise TRIP are especially important and applicable in the tensor recovery setting.
The following corollary, which shows that we may choose $\mathcal{L}=\mathcal{A}$ or $\mathcal{A}_{2nd}$ (as in \eqref{eqn: A} or \eqref{a final}),  now follows immediately from combining  Theorem~\ref{thm: rauhut_iht} with Theorems \ref{thm: first compression} and \ref{thm: second compression}.

\begin{corollary}\label{cor: TIHT}
Assume the operator $\mathcal{L}$, is defined in one of the  following ways: \begin{enumerate}
\item[(a)]   $\mathcal{L} = \vect\circ\mathcal{A}\circ\mathcal{R}$, where $\mathcal{A}$ is defined as per \eqref{eqn: A} and the matrices $A_i$ satisfy the RIP$\left(\eps, \mathcal{S}_{1,2} \right)$, and $\delta = 4d'(3r)^d\eps < a/4$.
\item[(b)]   $\mathcal{L} = \mathcal{A}_{2nd}$ defined as in \eqref{a final}, its component matrices $A_i$ satisfy RIP$\left(\eps, \mathcal{S}_{1,2} \right)$ property, $\delta = 12d'^2(3r)^d \eps$, and $A_{\text{final}}$ satisfies the $RIP(\delta/3, \mathcal{B}_{1+\eps,\eps,1-\delta/3,\mathbf{r}})$ property.
\end{enumerate}
Consider the recovery problem from the noisy measurements $\mathbf{y}=\mathcal{L}(\mathcal{X})+\mathbf{e},$ where $\mathbf{e}\in\mathbb{R}^m$ is an arbitrary noise vector.
Let $0<a<1,$ and let  $\mathcal{X}^j$, and $\mathcal{Y}^j$ be defined as in \eqref{eq:IHT_Y}, and assume that \eqref{eqn: cant check this} holds. Then,  
\begin{equation*}
    \|\mathcal{X}^{j+1}-\mathcal{X}\|_F\leq a^j\|\mathcal{X}^0-\mathcal{X}\|_F+\frac{b_a}{1-a}\|\mathbf{e}\|,
\end{equation*}
where $b_a =  2\sqrt{1+\delta}+\sqrt{4\xi_a + 2\xi_a^2}\|\mathcal{A}\|_{2\rightarrow2}.$
\end{corollary}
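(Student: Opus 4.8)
The plan is to reduce both cases of Corollary \ref{cor: TIHT} to Theorem \ref{thm: rauhut_iht}, whose only analytic hypothesis on the measurement map is that it satisfy TRIP$(\delta, 3\mathbf{r})$ with $\delta < a/4$. Since the corollary already \emph{assumes} that the technical condition \eqref{eqn: cant check this} holds, once the relevant $\mathcal{L}$ is shown to enjoy this TRIP bound, the stated error estimate follows verbatim from Theorem \ref{thm: rauhut_iht}. Thus the entire content of the proof is to certify TRIP$(\delta, 3\mathbf{r})$ for $\mathcal{L}$ in each case, which is precisely where the earlier TRIP theorems enter.

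For case (a), I would apply Theorem \ref{thm: first compression} with the rank parameter $r$ replaced by $3r$. Since $r \geq 2$ forces $3r \geq 6 \geq 2$, the hypothesis of that theorem is met, and its conclusion is that $\mathcal{A} \circ \mathcal{R}$ satisfies TRIP with distortion $4d'(3r)^d \eps$ on all tensors of HOSVD rank at most $3\mathbf{r}$; this distortion is exactly the $\delta$ assumed $< a/4$ in the corollary. The only remaining point is the outer $\vect$: it is present solely to land in the Euclidean codomain $\R^{m}$ required by Theorem \ref{thm: rauhut_iht}, and because vectorization preserves the Frobenius/Euclidean norm, $\|\vect(\mathcal{A}(\mathcal{R}(\mathcal{X})))\| = \|\mathcal{A}(\mathcal{R}(\mathcal{X}))\|$ for every $\mathcal{X}$. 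Hence $\mathcal{L} = \vect \circ \mathcal{A} \circ \mathcal{R}$ inherits TRIP$(\delta, 3\mathbf{r})$ unchanged, and likewise $\|\mathcal{L}\|_{2 \to 2} = \|\mathcal{A}\|_{2 \to 2}$, matching the constant $b_a$.

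For case (b) I would argue identically but through Theorem \ref{thm: second compression}, again run at rank $3r$ in place of $r$: the inner matrices $A_i$ are assumed RIP$(\eps, \mathcal{S}_{1,2})$ and the outer matrix $A_{\text{2nd}}$ is assumed to satisfy the requisite RIP on the nearly-orthogonal set $\mathcal{B}$, so Theorem \ref{thm: second compression} directly delivers TRIP$(\delta, 3\mathbf{r})$ for $\mathcal{A}_{2nd}$ with the recorded $\delta$ (here no $\vect$ step is needed, as $\mathcal{A}_{2nd}$ already maps into a Euclidean space). With $\delta < a/4$, Theorem \ref{thm: rauhut_iht} again closes the argument.

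The argument involves no genuinely hard step; its whole substance is bookkeeping of the rank and distortion parameters. The one place to be careful is that the first-stage TRIP results must be invoked at rank $3\mathbf{r}$ rather than $\mathbf{r}$, so that the induced distortions $4d'(3r)^d\eps$ and the case-(b) analogue correctly pick up the factor $(3r)^d$; and one must confirm that the hypothesis $\delta < a/4$ is simultaneously consistent with the requirement $\delta < 1$ imposed by Theorems \ref{thm: first compression} and \ref{thm: second compression} and strong enough to trigger Theorem \ref{thm: rauhut_iht}. Beyond threading these constants through the three results, there is nothing further to prove.
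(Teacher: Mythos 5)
Your proposal is correct and follows exactly the paper's route: the paper proves this corollary in one sentence by combining Theorem~\ref{thm: rauhut_iht} with Theorems~\ref{thm: first compression} and~\ref{thm: second compression} applied at rank $3\mathbf{r}$, which is precisely your argument. You in fact supply more detail than the paper does (the norm-preservation of $\vect$, the consistency of $\delta < a/4$ with $\delta < 1$, and the rank-parameter substitution $r \mapsto 3r$), all of which is sound.
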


\subsection{Experiments}\label{sec: experiments}
In this section, we show that TIHT can be used with modewise measurement maps in order to successfully reconstruct low-rank tensors. Herein we present numerical results for recovery of random four-mode tensors in $\R^{10 \times 10\times 10 \times 10}$ from both modewise Gaussian and SORS measurements. 

We run tensor iterative hard thresholding algorithm as defined in \eqref{eq:IHT_Y} to recover low-rank tensors from $m_0$ measurements for a variety of $m_0$ values. We compare the percentage of successfully recovered tensors from a batch of $100$ randomly generated low-rank tensors, as well as the average number of iterations used for recovery on the successful runs. We call a recovery process successful if the initial error between the true low-rank tensor and its initial random approximation decreases by a factor of at least $0.001$ in at most $1000$ iterations.  We compare standard vectorized measurements with proposed $2$-step partially modewise measurements that reshape a four-mode tensor into a $100 \times 100$ matrix, perform modewise measurements reducing each of the two reshaped modes to $m =90$, $80$ and $70$, and then vectorize that result and compress it further to the target dimension $m_0$. Herein we consider a variety of intermediate dimensions to demonstrate the stability of advantage of the modewise measurements over the vectorized ones. 

In addition to the smaller memory required for storing modewise measurement matrices, we show (Figure~\ref{fractions-recovered}) that modewise measurements are able to recover tensors from at least as small of a compressed representation as standard vectorized measurements can. Indeed, in the SORS case, modewise measurements can actually successfully recover low-rank tensors using a much smaller number of measurements (see Figure~\ref{fractions-recovered}, right column).  In Figure~\ref{gaussian-performance} we show that the described memory advantages do not result in the need for a substantially increased number of iterations in order to achieve our convergence criteria.  In the Gaussian case, the number of iterations needed when using a modewise measurements is at most twice the number as when using vectorized measurements. In the SORS case, modewise measurements actually require fewer iterations. Thus, modewise measurements are an effective, memory-efficient method of dimension reduction. 

\begin{figure}[ht]
  \centering
\includegraphics[width=.8\textwidth]{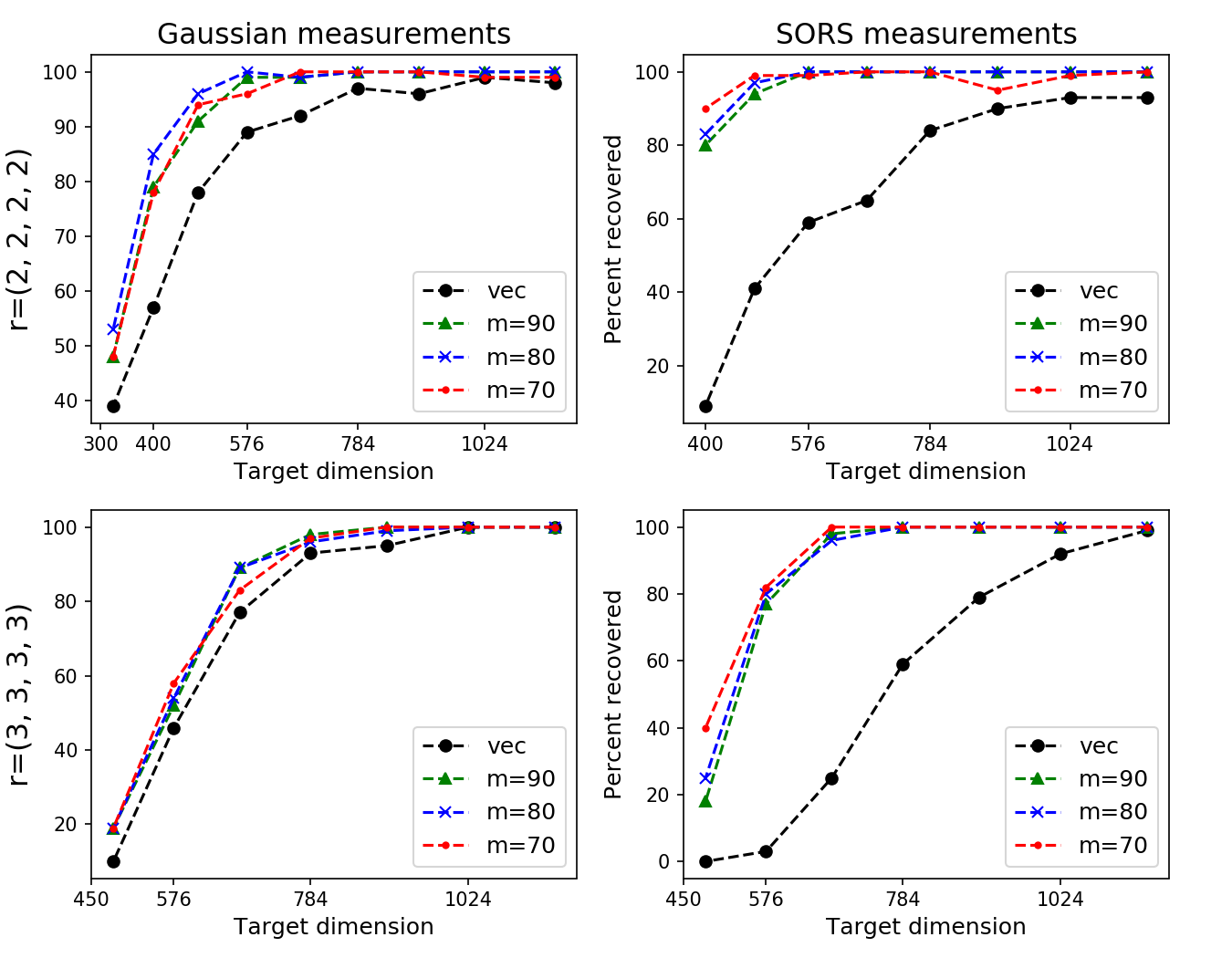}
\caption{
Fraction of successfully recovered random tensors out of a random sample of $100$ tensors in $\R^{10\times 10\times 10 \times 10}$ with various intermediate dimensions. A run is considered successful if it converged $0.001$ times the initial approximation error in at most $1000$ iterations.
}
\label{fractions-recovered}
\end{figure}

\begin{figure}[ht]
  \centering
\includegraphics[width=.8\textwidth]{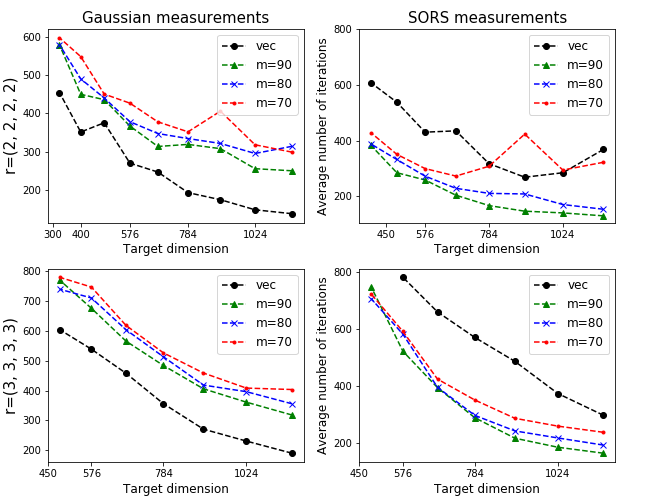}
\caption{
Average number of iterations until convergence among the successful runs. A run is considered successful if it converged $0.001$ times the initial approximation error in at most $1000$ iterations.
}
\label{gaussian-performance}
\end{figure}

\section{Proofs and theoretical guarantees}\label{sec: main proofs}
In this section, we will state auxiliary results that link the RIP property on a set $\mathcal{S}$ with the covering number of $\mathcal{S}$, and establish the covering number estimates for the subsets of interest.

\subsection{Auxiliary Results: RIP estimates}

For a set $\mathcal{S}$ we let  $\Net(\mathcal{S}, t)$ denote its covering number, i.e., the minimal cardinality of a net contained in $S$, such that every element of $S$ is within distance $t$ of an element of the net. For further discussion of the covering numbers, please see \cite{vershynin2018high}. 
The following proposition shows the estimates on the covering number of $\mathcal{S}$ can be used to show that maps constructed from sub-gaussian matrices have the RIP$(\eps, \mathcal{S})$ property.  Its proof, which is a generalization of the proof of \cite[Theorem 2]{rauhut2017low}, can be found in Appendix~\ref{sec: The proof of the Proposition}. 

\begin{proposition}\label{thm: sup_chaos}
Suppose $A \in \R^{m\times n^{\kappa}}$ has i.i.d. sub-gaussian entries. Let $\S\subseteq\R^{n \times \ldots\times n}$  be a subset of unit norm $\kappa$-mode tensors 
and let $\Net(\S, t)$ denote the covering number 
of $\S$. Then for any $0<\eta, \eps<1$ and
\begin{equation}\label{m-bound-prop}
m\geq \tilde{C}\eps^{-2}\max\left\{\left(\int_0^1\sqrt{\ln{\mathcal{N}(\S, t)}}dt\right)^2,1,\ln(\eta^{-1})\right\},
\end{equation}
for some suitably chosen constant $\tilde{C} >0$, with probability at least $1-\eta$, the map $\mathcal{A}(\mathcal{X})=A(\normalfont{\text{vect}}(\mathcal{X}))$ has the  RIP$(\eps, \mathcal{S})$ property, i.e.,
\begin{equation*}
    (1-\eps)\|\mathcal{X}\|^2
    \leq\|A(\text{\normalfont{vect}}(\mathcal{X}))\|^2    \leq (1+\eps)\|\mathcal{X}\|^2 \quad \text{ for all } \mathcal{X} \in \mathcal{S}.
\end{equation*}
\end{proposition}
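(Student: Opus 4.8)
The plan is to view $\|A\,\vect(\mathcal X)\|_2^2$ as a second-order chaos in the i.i.d.\ sub-gaussian entries of $A$ and to bound its deviation uniformly over $\S$ using the Krahmer--Mendelson--Rauhut concentration bound for suprema of chaos processes, which is the tool underlying the proof of \cite[Theorem 2]{rauhut2017low}; the only new feature here is that $\S$ is kept as an abstract unit-norm set rather than a specific tensor set. Since $\vect$ is an isometry I identify $\S$ with $\{\vect(\mathcal X):\mathcal X\in\S\}\subseteq\mathbb{S}^{n^\kappa-1}$ and write $s$ for a generic element. Collecting all entries of $A$ into $\xi\in\R^{m n^{\kappa}}$ (normalized so that $\E\|As\|_2^2=\|s\|_2^2$), one has $As=V_s\,\xi$ for every $s$, where $V_s=\tfrac{1}{\sqrt m}\,I_m\otimes s^{\top}$ is the block matrix carrying $s^{\top}$ on each of its $m$ diagonal blocks. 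Hence
\[
\sup_{s\in\S}\bigl|\,\|As\|_2^2-\|s\|_2^2\,\bigr|
=\sup_{s\in\S}\bigl|\,\|V_s\xi\|_2^2-\E\|V_s\xi\|_2^2\,\bigr|,
\]
and establishing RIP$(\eps,\S)$ with probability at least $1-\eta$ reduces to bounding this centered supremum by $\eps$ with that probability.

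Applying the chaos-process bound to the matrix family $\mathcal{V}=\{V_s:s\in\S\}$ controls this supremum, in expectation and in the tails, through $d_F(\mathcal{V})=\sup_s\|V_s\|_F$, $d_{2\to2}(\mathcal{V})=\sup_s\|V_s\|_{2\to2}$, and Talagrand's functional $\gamma_2(\mathcal{V},\|\cdot\|_{2\to2})$. The Kronecker structure makes all three transparent: for unit $s$ one has $\|V_s\|_F=1$ and $\|V_s\|_{2\to2}=1/\sqrt m$, so $d_F(\mathcal{V})=1$ and $d_{2\to2}(\mathcal{V})=1/\sqrt m$, while $\|V_s-V_{s'}\|_{2\to2}=\tfrac1{\sqrt m}\|s-s'\|_2$ shows the operator-norm metric on $\mathcal{V}$ is exactly $1/\sqrt m$ times the Euclidean metric on $\S$. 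Consequently $\gamma_2(\mathcal{V},\|\cdot\|_{2\to2})=\tfrac1{\sqrt m}\gamma_2(\S,\|\cdot\|_2)$, and Dudley's entropy integral bounds $\gamma_2(\S,\|\cdot\|_2)\lesssim\int_0^\infty\sqrt{\ln\mathcal{N}(\S,t)}\,dt$; since $\S$ lies on the unit sphere the integrand vanishes past the (bounded) diameter of $\S$, so up to an absolute constant this is the integral $I\coloneqq\int_0^1\sqrt{\ln\mathcal{N}(\S,t)}\,dt$ appearing in \eqref{m-bound-prop}.

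Substituting these quantities, the expectation form of the bound reads $\E\sup_s|\cdots|\lesssim\gamma_2^2+\gamma_2 d_F+d_F d_{2\to2}\lesssim \tfrac{I^2}{m}+\tfrac{I}{\sqrt m}+\tfrac1{\sqrt m}$, which is at most $\eps/2$ once $m\gtrsim\eps^{-2}\max\{I^2,1\}$. For the high-probability statement I would feed the accompanying sub-exponential tail bound a deviation $t\approx\eps/2$: with $d_{2\to2}=1/\sqrt m$ the variance proxy is of order $1/\sqrt m$ and the sub-gaussian branch $\exp(-c\,\eps^2 m)$ dominates for $\eps<1$, so the failure probability drops below $\eta$ as soon as $m\gtrsim\eps^{-2}\ln(\eta^{-1})$. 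Taking the maximum of the three requirements on $m$ yields exactly the bound \eqref{m-bound-prop}.

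The one genuinely delicate point is the bookkeeping in the second paragraph: translating the abstract $\gamma_2$/entropy control of the matrix family $\mathcal{V}$ back into the covering-number integral of $\S$ while keeping the $1/\sqrt m$ normalization straight, so that the radii come out as $d_F=1$ and $d_{2\to2}=1/\sqrt m$ and the three error terms assemble into the stated three-way maximum. The Kronecker identity $V_s=\tfrac1{\sqrt m}I_m\otimes s^{\top}$ is what makes this clean, and once it is in hand the chaos-process machinery applies verbatim to any unit-norm $\S$ --- the special structure exploited in \cite{rauhut2017low} enters only through the values of the radii and the entropy integral, which is precisely why the result generalizes.
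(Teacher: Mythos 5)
Your proposal is correct and follows essentially the same route as the paper's own proof: the identical reduction $A\,\vect(\mathcal X)=V_s\xi$ with the block-diagonal (Kronecker) matrices $V_s=\tfrac{1}{\sqrt m}I_m\otimes s^{\top}$, the Krahmer--Mendelson--Rauhut chaos bound applied to $\mathcal V=\{V_s\}$ with $d_F=1$, $d_{2\to2}=1/\sqrt m$, a Dudley-type entropy bound giving $\gamma_2(\mathcal V)\lesssim \tfrac{1}{\sqrt m}\int_0^1\sqrt{\ln\mathcal N(\S,t)}\,dt$, and the tail bound at deviation $t=\eps/2$ where the sub-gaussian branch dominates. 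The only cosmetic difference is that you transfer $\gamma_2$ via the metric isometry $\|V_s-V_{s'}\|_{2\to2}=\tfrac{1}{\sqrt m}\|s-s'\|_2$, whereas the paper phrases the same fact as a covering-number comparison followed by a change of variables.
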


We also need an analogue of Proposition~\ref{thm: sup_chaos} that holds for SORS matrices. Such results are known in the literature, however all of them have additional logarithmic terms compared to the i.i.d. sub-gaussian case.  We shall use the following result from \cite{mark-s-paper} which is a refinement of  Theorem 3.3 of \cite{oymak2018isometric}.

\begin{theorem}[Theorem 9 of  \cite{mark-s-paper}]\label{thm: sup_chaos_sors}
Suppose $A \in \R^{m\times n^{\kappa}}$ is a SORS matrix as per Definition~\ref{SORSdef} with $\Delta \leq C'$ for an absolute constant $C'$.  Let $\S$ 
be a subset of $\R^{n^\kappa}$, and let 
$\omega$ denote the Gaussian width (see, e.g. \cite{vershynin2018high}) of the projection of $\S$ onto the unit ball, $\left\{ {\bf x} / \| {\bf x} \|_2 ~\big|~{\bf x}\in\mathcal{S} \setminus \{ {\bf 0} \} \right\}$.
Let  $0<\eta, \eps<1$ and assume
\begin{equation}
m\geq \tilde{C}\eps^{-2} \omega^2 \ln^2\left[c_1 \omega^2 \ln(2 \eta^{-1})\eps^{-2}\right] \ln (2 \eta^{-1})\ln(2 en^{\kappa}\eta^{-1}),
\end{equation}
for some suitably chosen constants $\tilde{C}, c_1 >0$. Then, with probability at least $1-\eta$, the matrix $A$ has the  RIP$(\eps, \mathcal{S})$ property, i.e.,
\begin{equation*}
    (1-\eps)\|{\bf x}\|^2
    \leq\|A {\bf x}\|^2    \leq (1+\eps)\| {\bf x} \|^2 \quad \text{ holds for all } {\bf x} \in \mathcal{S}.
\end{equation*}
\end{theorem}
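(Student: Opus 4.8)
The plan is to recognize the statement as an instance of controlling the suprema of a second-order chaos process and to invoke the Krahmer--Mendelson--Rauhut machinery, following (and refining) the route of \cite{oymak2018isometric}. By homogeneity of the two-sided bound defining RIP$(\eps,\S)$, it suffices to bound $\sup_{{\bf x}\in T}\abs{\|A{\bf x}\|_2^2 - 1}$ over the normalized set $T \coloneqq \{{\bf x}/\|{\bf x}\|_2 : {\bf x}\in\S\setminus\{{\bf 0}\}\} \subseteq \mathbb{S}^{n^\kappa-1}$, whose Gaussian width is exactly the quantity $\omega$ in the statement. Writing the diagonal of $D$ as a Rademacher vector ${\bm\epsilon}$ and using $D{\bf x} = \mathrm{diag}({\bf x}){\bm\epsilon}$, I would set $V_{\bf x} \coloneqq \sqrt{n/m}\,H\,\mathrm{diag}({\bf x})$, so that $A{\bf x} = V_{\bf x}{\bm\epsilon}$ and $\|A{\bf x}\|_2^2 = {\bm\epsilon}^\top V_{\bf x}^\top V_{\bf x}{\bm\epsilon}$ is a quadratic form in ${\bm\epsilon}$. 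After centering, $\sup_{{\bf x}\in T}\abs{\|A{\bf x}\|_2^2 - \E_{\bm\epsilon}\|A{\bf x}\|_2^2}$ is then precisely a suprema of chaos process indexed by the matrix set $\mathcal V_T \coloneqq \{V_{\bf x} : {\bf x}\in T\}$.

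The main probabilistic step applies the Krahmer--Mendelson--Rauhut bound on suprema of chaos processes, which controls this supremum---together with its deviation tail---by three geometric parameters of $\mathcal V_T$: the $\gamma_2$ functional $\gamma_2(\mathcal V_T,\|\cdot\|_{2\to2})$, the Frobenius radius $d_F(\mathcal V_T) = \sup_{\bf x}\|V_{\bf x}\|_F$, and the operator-norm radius $d_{2\to2}(\mathcal V_T) = \sup_{\bf x}\|V_{\bf x}\|_{2\to2}$. Schematically the bound reads
\[
\sup_{{\bf x}\in T}\abs{\|A{\bf x}\|_2^2 - \E_{\bm\epsilon}\|A{\bf x}\|_2^2} \lesssim \gamma_2^2 + d_F\,\gamma_2 + (\text{tail terms in } d_{2\to2}\text{ and } d_F),
\]
and the tail terms, evaluated at confidence $1-\eta$, are what generate the logarithmic factors $\ln(2\eta^{-1})$ and $\ln(2en^\kappa\eta^{-1})$ in the final bound on $m$. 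Separately one must handle the mean term over the subsampling randomness in $H$: here $\E_{\bm\epsilon}\|A{\bf x}\|_2^2 = \tfrac{n}{m}\sum_k x_k^2 (H^\top H)_{kk}$, which concentrates around $\|{\bf x}\|_2^2 = 1$ because the rows of $F$ form an orthonormal system with entries bounded by $\Delta/\sqrt n$; equivalently one runs the chaos argument with both randomness sources present via the standard symmetrization.

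The heart of the argument is converting these matrix-set parameters into the Gaussian width $\omega$ of the vector set $T$. The boundedness assumption $\max_{i,j}\abs{F_{ij}}\le \Delta/\sqrt n$ from \eqref{BOS}, with $\Delta\le C'$, provides the decisive estimates: $d_F(\mathcal V_T)\approx \sup_{\bf x}\|{\bf x}\|_2 = 1$, while $d_{2\to2}(\mathcal V_T) \le \sqrt{n/m}\,\|H\|_{2\to2}\,\sup_{\bf x}\|{\bf x}\|_\infty$ is controlled through the bounded-entry structure; and, since ${\bf x}\mapsto V_{\bf x}$ is linear, $\gamma_2(\mathcal V_T,\|\cdot\|_{2\to2})$ can be majorized (via Talagrand's majorizing-measure theorem, or Dudley's entropy integral) by a multiple of $\omega$, picking up the squared-logarithm factor $\ln^2[c_1\omega^2\ln(2\eta^{-1})\eps^{-2}]$. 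Collecting these estimates and demanding that the right-hand side of the chaos bound be at most $\eps$ yields precisely the stated lower bound on $m$, with leading term $\tilde C\eps^{-2}\omega^2$ and the claimed polylogarithmic corrections.

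I expect the principal obstacle to be this last conversion: obtaining sharp, simultaneous control of $\gamma_2(\mathcal V_T,\|\cdot\|_{2\to2})$ and $d_{2\to2}(\mathcal V_T)$. The difficulty is that both are governed by $\ell_\infty$-type quantities arising from the bounded-orthonormal structure, whereas $\omega$ is an intrinsically $\ell_2$ quantity; reconciling the two sharply---rather than paying an extra factor of $n^\kappa$ or superfluous logarithms---is exactly where the refinement of \cite[Theorem 3.3]{oymak2018isometric} recorded in \cite{mark-s-paper} is required, and it is the step most sensitive to the precise form of the logarithmic factor $L$.
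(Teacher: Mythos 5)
You should first note that the paper contains no proof of this statement at all: it is imported verbatim as Theorem~9 of \cite{mark-s-paper} (an in-preparation reference refining Theorem~3.3 of \cite{oymak2018isometric}) and used as a black box, so your attempt must stand on its own. It does not, because the one-shot chaos argument you propose is structurally incapable of proving the result for SORS matrices. Conditioning on $H$ and treating $\|A\mathbf{x}\|_2^2$ as a Rademacher chaos in the signs cannot give uniform concentration over an arbitrary $\mathcal{S}$: take $\mathbf{x}=\mathbf{e}_k$ a coordinate vector (nothing in the hypotheses excludes these, and the paper applies the theorem to $\mathcal{S}_{1,2}$, which contains every $\mathbf{e}_{k_1}\otimes\cdots\otimes\mathbf{e}_{k_\kappa}$). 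Then $\|A\mathbf{e}_k\|_2^2=\tfrac{n}{m}\sum_i H_{ik}^2$ is completely independent of the sign matrix $D$ --- the chaos process is deterministically zero at that point --- so all concentration must come from the row-sampling randomness in $H$, which your plan relegates to a side computation about the mean term. In Krahmer--Mendelson--Rauhut parameter terms this shows up as $d_{2\to2}(\mathcal{V}_T)$ having no decay in $m$: $\|V_{\mathbf{e}_k}\|_{2\to2}=\sqrt{n/m}\,\|H\mathbf{e}_k\|_2\approx 1$, and indeed your own bound $\sqrt{n/m}\,\|H\|_{2\to2}\sup_{\mathbf{x}\in T}\|\mathbf{x}\|_\infty$ is $\gtrsim 1$, since $\sup_{\mathbf{x}\in T}\|\mathbf{x}\|_\infty$ can equal $1$ and $\|H\|_{2\to2}\gtrsim\sqrt{m/n}$. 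Consequently $U=d_{2\to2}^2=\Theta(1)$ and the additive term $d_F\,d_{2\to2}=\Theta(1)$ in the expectation bound of Theorem~\ref{gaus_chaos}, so that inequality can never certify $\sup_{\mathbf{x}}\bigl|\|A\mathbf{x}\|_2^2-1\bigr|\le\eps<1$, no matter how large $m$ is. (Your $\gamma_2$ step, by contrast, is fine --- the operator-norm metric on $\mathcal{V}_T$ is dominated by $\Delta$ times the $\ell_2$ metric via the Frobenius norm, so $\gamma_2(\mathcal{V}_T,\|\cdot\|_{2\to2})\lesssim\Delta\,\omega$ --- but it cannot rescue the vacuous $d_{2\to2}$ and tail terms.)

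This failure is structural, not a matter of sharpening constants; it is precisely why the RIP application in \cite{Krahmer2014Suprema} is to partial random circulant matrices, where the convolution structure yields $\|V_{\mathbf{x}}\|_{2\to2}\le\|\mathbf{x}\|_1/\sqrt{m}\le\sqrt{s/m}$ on $s$-sparse vectors, and not to subsampled orthogonal matrices. The actual architecture behind Theorem~\ref{thm: sup_chaos_sors} (in \cite{oymak2018isometric} and its refinement) is different: one first establishes a \emph{multiresolution} restricted isometry property for $\sqrt{n/m}\,H$ over sparse vectors at all dyadic sparsity levels, using the row-sampling randomness (Rudelson--Vershynin-type estimates, which is where the factor $\ln(2en^\kappa\eta^{-1})$ and the squared logarithm originate), and then runs a Krahmer--Ward-style argument showing that multiresolution RIP composed with the random sign matrix $D$ embeds an arbitrary set at cost $m\gtrsim\eps^{-2}\omega^2$ times polylogarithmic factors. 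Your final paragraph defers exactly this step to the cited reference, i.e., to the theorem being proved, so the attempt is circular at its core.
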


\begin{remark}\label{Gaussian_width_int}
It is known (see, e.g., Theorem 8.1.10 of \cite{vershynin2018high}) that the Gaussian width $\omega(\S)$ can be estimated by the same Dudley-type integral as used in \eqref{m-bound-prop}. Namely, for any set $\S$,
$$
\omega(\S) \le \int_0^\infty\sqrt{\ln{\mathcal{N}(\S, t)}}~dt.
$$
Additionally, if $\mathcal{S}$ is a subset of the unit ball, we have $\ln(\mathcal{N}(\S,t))=0$ for $t > 2$, and therefore,
$$
\omega(\S) \le \int_0^2\sqrt{\ln{\mathcal{N}(\S, t)}}~dt.
$$
\end{remark}

\subsection{Auxiliary results: covering estimates} The proofs of Corollaries \ref{cor:sub-gaussian model} and \ref{thm: second compression Gaussian} rely on applying  Proposition~\ref{thm: sup_chaos} to the sets $\mathcal{S}_{1,2}$ and $\mathcal{B}_{\mathbf{r},R,\theta,\mu}$ defined in Definitions \ref{def: S} and \ref{def: fancy B}. The proofs of Corollaries~\ref{cor:sors} and \ref{thm: second compression Fourier} analogously follow from an application of Theorem~\ref{thm: sup_chaos_sors}.
The following two lemmas provide covering estimates for these sets. Their proofs can be found in Appendix \ref{sec: proof of covering lemmas}. 
\begin{lemma}[Covering number for very low rank tensors]\label{lemma: cover}
 The covering number for the set $\S_{1,2}$ defined in Definition~\ref{def: S} satisfies $$\mathcal{N}(\S_{1,2},t) \le \left(\left(\frac{6\kappa}{t}\right)^{\kappa n} + 1\right)^2.$$
\end{lemma}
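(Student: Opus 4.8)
The plan is to build explicit nets for $S_1$ and $\S_2$ out of a single net on the sphere, and then to cover $\S_{1,2}=S_1\cup\S_2$ by taking the union of these two nets and using the subadditivity $\mathcal{N}(S_1\cup\S_2,t)\le \mathcal{N}(S_1,t)+\mathcal{N}(\S_2,t)$. First I would fix a resolution $s$ and take an $s$-net $Q\subset\mathbb{S}^{n-1}$ of the sphere, with net points lying on the sphere, using the standard bound $|Q|\le(1+2/s)^n\le(3/s)^n$ for $s\le 1$ (see \cite{vershynin2018high}).

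The analytic heart is that the Kronecker product is Lipschitz in each of its unit-vector arguments. From the telescoping identity
\begin{equation*}
\bigotimes_{i=1}^\kappa \mathbf{u}^i - \bigotimes_{i=1}^\kappa \mathbf{v}^i = \sum_{j=1}^\kappa \Bigl(\bigotimes_{i<j}\mathbf{v}^i\Bigr)\otimes(\mathbf{u}^j - \mathbf{v}^j)\otimes\Bigl(\bigotimes_{i>j}\mathbf{u}^i\Bigr),
\end{equation*}
together with the multiplicativity $\|\mathbf{a}\otimes\mathbf{b}\|_2=\|\mathbf{a}\|_2\|\mathbf{b}\|_2$ and the triangle inequality, one obtains $\bigl\|\bigotimes_i \mathbf{u}^i - \bigotimes_i \mathbf{v}^i\bigr\|_2\le \sum_{i=1}^\kappa\|\mathbf{u}^i-\mathbf{v}^i\|_2$ whenever all the $\mathbf{u}^i,\mathbf{v}^i$ are unit. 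Hence $\mathcal{C}_1:=\{\otimes_{i=1}^\kappa \mathbf{v}^i:\mathbf{v}^i\in Q\}\subseteq S_1$ is a $(\kappa s)$-net of $S_1$ with $|\mathcal{C}_1|\le|Q|^\kappa\le(3/s)^{\kappa n}$. Choosing $s=t/\kappa$ gives a $t$-net of $S_1$ of size at most $(3\kappa/t)^{\kappa n}\le(6\kappa/t)^{\kappa n}$.

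For $\S_2$ I would use that every element has the form $(\mathbf{x}+\mathbf{y})/\sqrt2$ with $\mathbf{x},\mathbf{y}\in S_1$ and $\langle\mathbf{x},\mathbf{y}\rangle=0$, so that $\|\mathbf{x}+\mathbf{y}\|_2=\sqrt2$ pins the normalization. Approximating both $\mathbf{x}$ and $\mathbf{y}$ by elements $\tilde{\mathbf{x}},\tilde{\mathbf{y}}$ of a net of $S_1$ at resolution $\kappa s'$ gives
\begin{equation*}
\Bigl\|\frac{\mathbf{x}+\mathbf{y}}{\sqrt2}-\frac{\tilde{\mathbf{x}}+\tilde{\mathbf{y}}}{\sqrt2}\Bigr\|_2\le\frac{1}{\sqrt2}\bigl(\|\mathbf{x}-\tilde{\mathbf{x}}\|_2+\|\mathbf{y}-\tilde{\mathbf{y}}\|_2\bigr)\le\sqrt2\,\kappa s',
\end{equation*}
so the collection of pairwise sums has cardinality at most $|\mathcal{C}_1|^2\le(3/s')^{2\kappa n}$ and covers $\S_2$ at resolution $\sqrt2\,\kappa s'$. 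Taking $s'=t/(\sqrt2\,\kappa)$ yields a $t$-net of $\S_2$ of size at most $(3\sqrt2\,\kappa/t)^{2\kappa n}\le(6\kappa/t)^{2\kappa n}$. Combining the two via union subadditivity gives $\mathcal{N}(\S_{1,2},t)\le(6\kappa/t)^{\kappa n}+(6\kappa/t)^{2\kappa n}$, and the elementary inequality $a+a^2\le(a+1)^2$ applied with $a=(6\kappa/t)^{\kappa n}$ produces exactly the claimed bound.

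The step needing the most care — and the one I expect to be the main obstacle — is the handling of $\S_2$. One must exploit the orthogonality to fix the $\sqrt2$ normalization, and one must reconcile the construction with the convention that the net be contained in the set, since the pairwise sums $\tilde{\mathbf{x}}+\tilde{\mathbf{y}}$ are generally not orthogonal and so their normalizations need not literally lie in $\S_2$. This is resolved either by reading the covering number externally (which is all that the downstream Dudley/Gaussian-width estimates in Remark~\ref{Gaussian_width_int} require) or by the standard external-to-internal passage that replaces each candidate center by a nearby genuine element of $\S_2$ at the cost of a universal constant factor. It is precisely this slack, together with the factors $\kappa$ and $\sqrt2$ arising from the two perturbation bounds, that the generous constant $6$ is chosen to absorb.
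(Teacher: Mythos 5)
Your proposal is correct and takes essentially the same route as the paper: a Kronecker product of sphere nets plus the identical telescoping Lipschitz estimate covers $S_1$, and $\S_2$ is covered by approximating both orthogonal summands with elements of that net, yielding the same cardinality count. The only cosmetic differences are that the paper handles $S_1$ and $\S_2$ in one stroke via the unnormalized sums $\X_1+\X_2$ (with $\X_i\in S_1\cup\{{\bf 0}\}$) followed by a projection-onto-the-sphere contraction argument, while you treat the two sets separately, pin the normalization at $\sqrt{2}$ by orthogonality, and finish with union subadditivity; the internal-versus-external net subtlety you flag is present in the paper's argument as well (its projected net points need not lie in $\S_{1,2}$ either), so your treatment is no less rigorous.
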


\begin{lemma}\label{lem: CoverfancyB1}
For all $\theta\geq 0,$ $0<\eps,\mu<1,$ $R\geq 1,$ and all $\mathbf{r} = (r, r, \ldots, r) \in \R^d$, the set  $\mathcal{B}_{R,\mu,\theta,\mathbf{r}} \subset \R^{n}$ defined in Definition \ref{def: fancy B} admits a covering with
\begin{equation}\label{eqn: generalize holger cover}
\mathcal{N}(\mathcal{B}_{R,\mu,\theta,\mathbf{r}},\|\cdot\|_F,\epsilon)\leq \left(\frac{6(d+1)}{\eps}\right)^{r^d + rnd} \left(R^2+\mu r\right)^{r^dd/2}\left(R^2+\mu r^d\right)^{dnr/2}R^{(d-1)dnr}.
\end{equation}
(Note that the right-hand  side is independent of $\theta$.)

\end{lemma}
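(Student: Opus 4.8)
The plan is to follow and quantitatively sharpen the HOSVD covering estimate of \cite{rauhut2017low}, carefully tracking the extra dependence on $R$ and $\mu$ forced by the relaxed hypotheses (a)--(b). First I would note that conditions (d) and (e) only shrink $\mathcal{B}_{R,\mu,\theta,\mathbf{r}}$, so since the claimed bound is independent of $\theta$ it suffices to cover the larger set $\mathcal{B}'$ of all tensors $\mathcal{X}=\mathcal{C}\times_1 U^1\times_2\cdots\times_d U^d$ whose core obeys $\|\mathcal{C}\|_F=1$ and whose factor matrices $U^i=(\mathbf{u}^i_1,\ldots,\mathbf{u}^i_r)\in\R^{n\times r}$ have columns satisfying $\|\mathbf{u}^i_k\|_2\le R$ and $|\langle \mathbf{u}^i_k,\mathbf{u}^i_{k'}\rangle|\le\mu$ for $k\neq k'$. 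Covering $\mathcal{B}'$ then reduces to building a product net over the parameter space consisting of one unit-norm core together with the $rd$ constrained factor columns. (The passage from an external net of $\mathcal{B}'$ to an internal net of $\mathcal{B}_{R,\mu,\theta,\mathbf{r}}$ costs only a factor of two in the radius, which accounts for the constant $6(d+1)$ rather than $3(d+1)$.)

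The analytic core is a telescoping perturbation estimate. Using near-orthogonality, Gershgorin's theorem applied to the Gram matrix $(U^i)^\top U^i$ (diagonal entries at most $R^2$, off-diagonal entries at most $\mu$ in modulus) yields $\|U^i\|_{2\to2}^2\le R^2+(r-1)\mu\le R^2+\mu r$. Writing $\mathcal{X}-\tilde{\mathcal{X}}$ as a telescoping sum in which the core and then each factor matrix is swapped one at a time, and bounding each summand via the submultiplicativity $\|\mathcal{Z}\times_j V\|_F\le\|V\|_{2\to2}\|\mathcal{Z}\|_F$ together with $\|\mathcal{C}\|_F=1$, I would obtain a bound of the schematic form $\|\mathcal{X}-\tilde{\mathcal{X}}\|_F\le (R^2+\mu r)^{d/2}\,\|\mathcal{C}-\tilde{\mathcal{C}}\|_F+\sum_{j=1}^d L_j\,\|U^j-\tilde U^j\|$. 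For the factor terms I would expand the swapped tensor into its rank-one constituents, fix a perturbed column $\mathbf{u}^j_k$, and bound the induced change by the Frobenius norm of the corresponding core slice times a product over the remaining $d-1$ modes; separating the diagonal contribution from the $\mu$-weighted off-diagonal contribution is what produces the factor-column constants involving $R^{d-1}$ and $(R^2+\mu r^d)^{1/2}$.

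With the Lipschitz constants in hand I would assemble the product net: an $\alpha_C$-net of the unit sphere of cores in $\R^{r^d}$, of cardinality at most $(3/\alpha_C)^{r^d}$, and for each of the $rd$ factor columns an $\alpha_U$-net of the radius-$R$ ball in $\R^n$, of cardinality at most $(3R/\alpha_U)^n$. Distributing $\eps$ across the $d+1$ groups (the core plus the $d$ factor matrices) and choosing the resolutions proportional to the respective Lipschitz constants, the core choice $\alpha_C=\eps/[\,2(d+1)(R^2+\mu r)^{d/2}\,]$ gives $(3/\alpha_C)^{r^d}=(6(d+1)/\eps)^{r^d}(R^2+\mu r)^{r^d d/2}$, matching the first two claimed factors exactly; the analogous choice for the columns, driven by the factor-perturbation estimate of the previous paragraph, contributes $(6(d+1)/\eps)^{rnd}(R^2+\mu r^d)^{dnr/2}R^{(d-1)dnr}$. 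Multiplying the $r^d$ core-net term with the $rd$ column-net terms then produces precisely $\big(\tfrac{6(d+1)}{\eps}\big)^{r^d+rnd}(R^2+\mu r)^{r^d d/2}(R^2+\mu r^d)^{dnr/2}R^{(d-1)dnr}$.

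The main obstacle is the factor-perturbation estimate, that is, pinning down the correct column Lipschitz dependence. A naive application of operator-norm submultiplicativity would replace the product over the untouched modes by $(R^2+\mu r)^{(d-1)/2}$ and introduce a spurious $\sqrt{r}$, giving the wrong exponents; obtaining the stated $R^{(d-1)}$ and $(R^2+\mu r^d)^{1/2}$ instead requires the rank-one expansion and a careful accounting that balances the $O(R^{2(d-1)})$ diagonal mass against the $\mu$-controlled off-diagonal mass, using $\|\mathcal{C}\|_F=1$, $\|\mathbf{u}^i_k\|_2\le R$, and $|\langle\mathbf{u}^i_k,\mathbf{u}^i_{k'}\rangle|\le\mu$ in concert. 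Once this estimate is established, the remaining steps—sphere and ball net counts, the $\eps$ split, and the final product—are entirely routine.
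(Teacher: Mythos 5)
Your architecture is the same as the paper's: a product net (core net times factor nets), the telescoping decomposition $\mathcal{X}-\overline{\mathcal{X}}=\sum_{j=0}^{d}\mathcal{T}_j$, separate Lipschitz constants for the core term and the factor terms, the resolution choices $\epsilon_1\sim\eps/[(d+1)(R^2+\mu r)^{d/2}]$ and $\epsilon_2\sim\eps/[(d+1)R^{d-2}(R^2+\mu r^d)^{1/2}]$, and an external-to-internal net conversion (the paper's Lemma~\ref{lem:setsetcover}) that accounts for the constant $6(d+1)$. However, there is a genuine gap at your very first step: you discard hypothesis (d) (orthogonality of the core subtensors) together with (e) and propose to cover the enlarged set $\mathcal{B}'$ whose cores are only required to satisfy $\|\mathcal{C}\|_F=1$. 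The paper drops only (e); it keeps (d), and (d) is indispensable exactly at the step you identify as the main obstacle. In the bound for $\|\mathcal{T}_j\|_F^2$, the ``diagonal'' contribution (indices with $k_i=\ell_i$ for all $i\neq j$) is controlled by
\[
R^{2(d-1)}\eps_2^2\sum_{\substack{k_i,\ i\neq j}}\ \sum_{k_j,\ell_j}\mathcal{C}(k_1,\ldots,k_j,\ldots,k_d)\,\mathcal{C}(k_1,\ldots,\ell_j,\ldots,k_d),
\]
and it is precisely property (d) that makes the inner double sum collapse to $\|\mathcal{C}\|_F^2=1$. Using only $\|\mathcal{C}\|_F=1$, that sum can be as large as $r$, so the factor Lipschitz constant degrades to $R^{d-2}(rR^2+\mu r^d)^{1/2}$, and the net cardinality acquires an extra factor of order $r^{dnr/2}$ beyond \eqref{eqn: generalize holger cover}.

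This is not an artifact of loose bounding; the estimate your exponents require is false on $\mathcal{B}'$. Take $d=2$, let $\mathcal{C}=J/r$ (all entries $1/r$, so $\|\mathcal{C}\|_F=1$ but the subtensors are parallel, not orthogonal), let $\overline{V}^1$ and $V^2=\overline{V}^2$ have orthonormal columns, and let $V^1=\overline{V}^1+\mathbf{w}\mathbf{1}^T$ with $\mathbf{w}$ orthogonal to the range of $\overline{V}^1$ and $\|\mathbf{w}\|_2=\eps_2$; then $V^1$ has column norms $\sqrt{1+\eps_2^2}$ and coherence $\eps_2^2$, so both tensors lie in $\mathcal{B}'$ with $R=\sqrt{1+\eps_2^2}$, $\mu=\eps_2^2$. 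Since $\mathbf{1}^T(J/r)=\mathbf{1}^T$, the difference is $\mathcal{T}_1=\mathcal{C}\times_1(\mathbf{w}\mathbf{1}^T)\times_2\overline{V}^2 = \mathbf{w}\,(\overline{V}^2\mathbf{1})^T$, whose Frobenius norm is $\eps_2\sqrt{r}$, while the bound needed for your column resolution is $\eps_2\bigl((1+\eps_2^2)+\eps_2^2r^2\bigr)^{1/2}\le\sqrt{3}\,\eps_2$ once $\eps_2\le 1/r$ --- a contradiction for $r\ge 4$. So no rank-one expansion or diagonal/off-diagonal accounting can recover the stated constants from $\|\mathcal{C}\|_F=1$ and the column constraints alone. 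The fix is simply to retain condition (d) on the set you cover (as the paper does, covering $\mathcal{B}_{R,\mu,0,\mathbf{r}}$ rather than $\mathcal{B}'$): this costs nothing in the core count, since cores satisfying (d) still form a subset of the unit sphere of $\R^{r^d}$ and admit a $(3/\epsilon_1)^{r^d}$-point net, and it supplies exactly the cancellation needed in the display above.
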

\noindent 
\begin{remark}
If we set $\mu=\theta=0$ and $R=1$ we may obtain the covering number bound
\begin{equation*}
    \mathcal{N}(\mathcal{B}_{1,0,0,\mathbf{r}},\|\cdot\|_F,\epsilon)\leq \left(\frac{3(d+1)}{\epsilon}\right)^{r^d+dnr},
\end{equation*}
via a trival modification of the proof of Lemma~\ref{lem: CoverfancyB1} which doesn't require an application of Lemma~\ref{lem:setsetcover} when $\theta=0$.  This is the same as the estimate obtained in \cite[Lemma 5]{rauhut2017low} for $\mathcal{B}_{1,0,0,\mathbf{r}}$.
\end{remark}

\subsection{Proof of Theorem~\ref{thm: first compression} and Corollaries ~\ref{cor:sub-gaussian model} and \ref{cor:sors}}
\label{sec:ProofofTHm1}
 
In order to prove  Theorem~\ref{thm: first compression}, it will be useful to write $\mathcal{A}$ as a composition of maps \begin{equation}
    \mathcal{A}(\mathcal{Y})=\mathcal{A}_{d'}(\ldots(\mathcal{A}_1(\mathcal{Y}))), \text{ where } \mathcal{A}_i(\mathcal{Y})= \mathcal{Y}\times_i A_i \text{ for } 1\leq i \leq d'.
\end{equation}
Our argument will be based on showing that $\mathcal{A}_{i}$ approximately preserves the norm of $\mathcal{A}_{i-1}(\ldots(\mathcal{A}_1(\accentset{\circ}{\mathcal{X}})))$ for all $1\leq i \leq d'$. We first note that by \eqref{eqn: reshape HOSVD}, we may still write $\accentset{\circ}{\mathcal{X}}$ as a sum of $r^d$ orthogonal tensors.  This motivates Lemma~\ref{lem: orthogonal sums} which shows that if a linear operator $L$ on an inner product space $V$ satisfies certain assumptions, then it approximately preserves the norm of orthogonal sums (up to a factor depending on the number of terms).  
Lemma~\ref{lem: A1rank1} then provides sufficient conditions for the assumptions  of Lemma \ref{lem: orthogonal sums} to hold. Lastly, Lemma~\ref{lem: Ytform} will show that the image of the first $i-1$ compressions, $\mathcal{A}_{i-1}(\ldots(\mathcal{A}_1(\accentset{\circ}{\mathcal{X}}))),$  satisfies these conditions and  therefore that we may proceed inductively. The proofs of Lemmas \ref{lem: orthogonal sums}, \ref{lem: A1rank1}, and \ref{lem: Ytform} are deferred to  Appendix \ref{sec: The proof of orthogonal sums}. 
\begin{lemma}\label{lem: orthogonal sums}
Let $\mathcal{V}$ be an inner product space and let $\mathcal{L}$ be a linear operator on $\mathcal{V}.$ Let $\mathcal{U} \subset \mathcal{V}$ be a subspace of $\mathcal{V}$ spanned by an orthonormal system $\{{\bf v}_1,\ldots, {\bf v}_K\} \in \mathcal{V}$. 
Suppose that 
\begin{equation}\label{as: JLvsp}
    (1-\epsilon)\|{\bf v}_i\|^2\leq \|\mathcal{L}{\bf v}_i\|^2\leq (1+\epsilon)\|{\bf v}_i\|^2\quad\text{for all }1\leq i\leq K.
\end{equation}
and also that 
\begin{equation}\label{as: JLvsp2dprime}
    (1-\epsilon)\|{\bf v}_i\pm {\bf v}_j\|^2\leq \|\mathcal{L}({\bf v}_i\pm {\bf v}_j)\|^2\leq (1+\epsilon)\| {\bf v}_i\pm {\bf v}_j\|^2\quad\text{for all }1\leq i,j\leq K.
\end{equation}
Then we have 
\begin{equation*}
    (1-K\epsilon)\| {\bf w} \|^2\leq \|\mathcal{L} {\bf w}\|^2\leq (1+K\epsilon)\| {\bf w}\|^2 \text{ for all } {\bf w} \in {\mathcal U}.
\end{equation*}
\end{lemma}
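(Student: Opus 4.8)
The plan is to reduce the statement to a bound on the pairwise inner products $\langle \mathcal{L}{\bf v}_i, \mathcal{L}{\bf v}_j\rangle$ and then expand an arbitrary ${\bf w} \in \mathcal{U}$ in the orthonormal basis $\{{\bf v}_1,\ldots,{\bf v}_K\}$. First I would record the normalizations: since the system is orthonormal, $\|{\bf v}_i\| = 1$ for every $i$ and $\|{\bf v}_i \pm {\bf v}_j\|^2 = 2$ whenever $i \neq j$. Hence the hypotheses \eqref{as: JLvsp} and \eqref{as: JLvsp2dprime} specialize to $\|\mathcal{L}{\bf v}_i\|^2 \in [1-\epsilon,\,1+\epsilon]$ and, for $i \neq j$, to $\|\mathcal{L}({\bf v}_i \pm {\bf v}_j)\|^2 \in [2(1-\epsilon),\,2(1+\epsilon)]$.

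The key step is to extract an off-diagonal estimate via the polarization identity $\langle \mathcal{L}{\bf v}_i, \mathcal{L}{\bf v}_j\rangle = \tfrac14\left(\|\mathcal{L}({\bf v}_i + {\bf v}_j)\|^2 - \|\mathcal{L}({\bf v}_i - {\bf v}_j)\|^2\right)$, valid in a real inner product space. Inserting the upper bound on $\|\mathcal{L}({\bf v}_i + {\bf v}_j)\|^2$ together with the lower bound on $\|\mathcal{L}({\bf v}_i - {\bf v}_j)\|^2$ (and then the reverse pairing) from \eqref{as: JLvsp2dprime} yields $|\langle \mathcal{L}{\bf v}_i, \mathcal{L}{\bf v}_j\rangle| \leq \tfrac14\bigl(2(1+\epsilon) - 2(1-\epsilon)\bigr) = \epsilon$ for all $i \neq j$.

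Next I would write ${\bf w} = \sum_{i=1}^K a_i {\bf v}_i$, so that $\|{\bf w}\|^2 = \sum_i a_i^2$ by orthonormality, and expand $\|\mathcal{L}{\bf w}\|^2 = \sum_{i} a_i^2 \|\mathcal{L}{\bf v}_i\|^2 + \sum_{i \neq j} a_i a_j \langle \mathcal{L}{\bf v}_i, \mathcal{L}{\bf v}_j\rangle$. The diagonal sum lies in $[(1-\epsilon)\|{\bf w}\|^2,\,(1+\epsilon)\|{\bf w}\|^2]$ by \eqref{as: JLvsp}. For the off-diagonal sum, the inner-product bound just obtained gives $\bigl|\sum_{i \neq j} a_i a_j \langle \mathcal{L}{\bf v}_i, \mathcal{L}{\bf v}_j\rangle\bigr| \leq \epsilon \sum_{i \neq j} |a_i||a_j| = \epsilon\bigl((\sum_i |a_i|)^2 - \sum_i a_i^2\bigr)$, and the Cauchy–Schwarz inequality $(\sum_i |a_i|)^2 \leq K \sum_i a_i^2$ bounds this by $\epsilon(K-1)\|{\bf w}\|^2$. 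Combining the diagonal and off-diagonal pieces then gives $\|\mathcal{L}{\bf w}\|^2 \leq (1 + \epsilon + (K-1)\epsilon)\|{\bf w}\|^2 = (1 + K\epsilon)\|{\bf w}\|^2$ and, symmetrically, $\|\mathcal{L}{\bf w}\|^2 \geq (1 - K\epsilon)\|{\bf w}\|^2$, which is exactly the desired conclusion.

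Since every ingredient is elementary, there is no real obstacle here; the only points demanding care are bookkeeping the constants so that the cross term contributes precisely $(K-1)\epsilon$ — which combines with the diagonal $\pm\epsilon$ to produce the clean factor $K\epsilon$ rather than a larger one — and invoking Cauchy–Schwarz in the sharp form $(\sum_i |a_i|)^2 \leq K \sum_i a_i^2$ instead of a looser bound.
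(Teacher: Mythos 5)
Your proof is correct, and it shares the paper's key ingredient: your polarization step extracting $|\langle \mathcal{L}{\bf v}_i, \mathcal{L}{\bf v}_j\rangle| \leq \epsilon$ for $i \neq j$ is precisely the paper's auxiliary Lemma~\ref{lem: preserves angles}. Where you diverge is in how the quadratic form is then assembled. The paper argues by induction on $K$: it splits ${\bf w} = {\bf w}_{K-1} + c_K {\bf v}_K$, bounds the single cross term $2 c_K \langle \mathcal{L}{\bf w}_{K-1}, \mathcal{L}{\bf v}_K\rangle$ using the elementary inequality $2|c_i c_K| \leq c_i^2 + c_K^2$, and then invokes the inductive hypothesis on ${\bf w}_{K-1}$. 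You instead expand $\|\mathcal{L}{\bf w}\|^2$ in one shot over the full basis and control all off-diagonal terms simultaneously via Cauchy--Schwarz in the form $\left(\sum_i |a_i|\right)^2 \leq K \sum_i a_i^2$. The two counting devices yield the same contribution $(K-1)\epsilon \|{\bf w}\|^2$ from the cross terms, so both land on the clean factor $1 \pm K\epsilon$. Your non-inductive version is arguably shorter and makes the source of the factor $K$ transparent --- it is just the off-diagonal entries of the Gram matrix of $\{\mathcal{L}{\bf v}_i\}$, each bounded by $\epsilon$ --- whereas the paper's induction trades that global bookkeeping for a simpler per-step estimate; the content is otherwise equivalent.
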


 The next lemma checks that, if $A_{i_0}$ satisfies RIP$(\eps, \mathcal{S}_{1,2})$ property for some $1\leq i_0\leq d',$ then the operator $\mathcal{A}_{i_0}$ satisfies the conditions of Lemma~\ref{lem: orthogonal sums} for the system of rank one component tensors that are produced by our reshaping procedure.

\begin{lemma}\label{lem: A1rank1} Let $\{\mathcal{V}_1, \ldots, \mathcal{V}_{K}\} \in \R^{n \times \ldots \times n}$  be an orthonormal system of rank one tensors of the form $\mathcal{V}_k = \outprod_{i=1}^{d'} {\bf v}_{k}^i$ where $\|{\bf v}_{k}^i\| = 1$ for all $1\leq i \leq d'$. Let  $1\leq i_0 \leq d'$, suppose $A_{i_0}$ has the RIP$(\eps/2, \mathcal{S}_{1,2})$ property and assume that each  ${\bf v}_{k}^{i_0} $ is an element of the set $S_1$ defined in Definition \ref{def: S}. Then the conditions \eqref{as: JLvsp} and \eqref{as: JLvsp2dprime} are satisfied for (the vectorizations of) these $\{\mathcal{V}_i\}_{i=1}^K$ and $\mathcal{L} = \mathcal{A}_{i_0}$ defined via $\mathcal{A}_{i_0}(\mathcal{X})= \mathcal{X} \times_{i_0} A_{i_0}$.   
\end{lemma}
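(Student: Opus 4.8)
The plan is to exploit the fact, recorded in \eqref{eqn:modewise action}, that the $i_0$-mode product acts on a rank-one tensor by replacing only its $i_0$-th factor: for $\mathcal{V}_k = \outprod_{i=1}^{d'}{\bf v}_k^i$ one has $\mathcal{A}_{i_0}(\mathcal{V}_k) = \left(\outprod_{i<i_0}{\bf v}_k^i\right)\outprod \left(A_{i_0}{\bf v}_k^{i_0}\right)\outprod\left(\outprod_{i>i_0}{\bf v}_k^i\right)$, every factor other than the $i_0$-th being left unchanged. Since the Frobenius norm of an outer product is the product of its factor norms, and the inner product of two outer products is the product of the factorwise inner products, both sides of \eqref{as: JLvsp} and \eqref{as: JLvsp2dprime} factor across the modes. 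As $\|{\bf v}_k^i\|=1$ for all $i$, every factor other than the $i_0$-th contributes a factor of $1$, so the whole verification reduces to controlling $A_{i_0}$ on the $i_0$-th factors, which by hypothesis lie in $S_1\subseteq\S_{1,2}$.

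For the single-vector condition \eqref{as: JLvsp}, this factorization gives $\|\mathcal{A}_{i_0}(\mathcal{V}_k)\|^2 = \|A_{i_0}{\bf v}_k^{i_0}\|^2$ while $\|\mathcal{V}_k\|^2=1$, so applying the $\mathrm{RIP}(\eps/2,\S_{1,2})$ property of $A_{i_0}$ to ${\bf v}_k^{i_0}\in S_1$ yields $(1-\eps/2)\le\|\mathcal{A}_{i_0}(\mathcal{V}_k)\|^2\le(1+\eps/2)$, which is stronger than what \eqref{as: JLvsp} demands.

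For \eqref{as: JLvsp2dprime} I would expand $\|\mathcal{A}_{i_0}(\mathcal{V}_i\pm\mathcal{V}_j)\|^2 = a + b \pm 2cp$, where $a\coloneqq\|A_{i_0}{\bf v}_i^{i_0}\|^2$, $b\coloneqq\|A_{i_0}{\bf v}_j^{i_0}\|^2$, $c\coloneqq\langle A_{i_0}{\bf v}_i^{i_0},A_{i_0}{\bf v}_j^{i_0}\rangle$, and $p\coloneqq\prod_{k\neq i_0}\langle{\bf v}_i^k,{\bf v}_j^k\rangle$, with $a,b\in[1-\eps/2,1+\eps/2]$ from the previous paragraph and $|p|\le1$ by Cauchy--Schwarz. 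Orthonormality of the system forces $\langle\mathcal{V}_i,\mathcal{V}_j\rangle = \langle{\bf v}_i^{i_0},{\bf v}_j^{i_0}\rangle\,p = 0$, which I split into two cases. When $\langle{\bf v}_i^{i_0},{\bf v}_j^{i_0}\rangle\neq0$ we must have $p=0$; then the cross term vanishes and $\|\mathcal{A}_{i_0}(\mathcal{V}_i\pm\mathcal{V}_j)\|^2 = a+b\in[2-\eps,2+\eps]$, which sits inside the target interval $[2-2\eps,2+2\eps]=(1\pm\eps)\|\mathcal{V}_i\pm\mathcal{V}_j\|^2$.

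The remaining case ${\bf v}_i^{i_0}\perp{\bf v}_j^{i_0}$ contains the main obstacle: the cross term must be bounded \emph{sharply}, since the naive estimate $|c|\le\eps$ would only give $\|\mathcal{A}_{i_0}(\mathcal{V}_i\pm\mathcal{V}_j)\|^2\in[2-3\eps,2+3\eps]$, overshooting $[2-2\eps,2+2\eps]$. To recover the lost factor I would use that \emph{both} $\frac{{\bf v}_i^{i_0}+{\bf v}_j^{i_0}}{\sqrt2}$ and $\frac{{\bf v}_i^{i_0}-{\bf v}_j^{i_0}}{\sqrt2}$ lie in $\S_2\subseteq\S_{1,2}$; here orthogonality makes each denominator $\sqrt2$, and $-{\bf v}_j^{i_0}\in S_1$ because negating a single Kronecker factor of an element of $S_1$ keeps it in $S_1$. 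Applying $\mathrm{RIP}(\eps/2,\S_{1,2})$ to these two unit vectors gives $\|A_{i_0}({\bf v}_i^{i_0}\pm{\bf v}_j^{i_0})\|^2\in[2-\eps,2+\eps]$, so the polarization identity $4c = \|A_{i_0}({\bf v}_i^{i_0}+{\bf v}_j^{i_0})\|^2 - \|A_{i_0}({\bf v}_i^{i_0}-{\bf v}_j^{i_0})\|^2$ yields $4|c|\le(2+\eps)-(2-\eps)=2\eps$, i.e. $|c|\le\eps/2$. Hence $|2cp|\le\eps$, and combining with $a+b\in[2-\eps,2+\eps]$ gives $\|\mathcal{A}_{i_0}(\mathcal{V}_i\pm\mathcal{V}_j)\|^2\in[2-2\eps,2+2\eps]=(1\pm\eps)\|\mathcal{V}_i\pm\mathcal{V}_j\|^2$, establishing \eqref{as: JLvsp2dprime}. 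The degenerate subcase $i=j$ is immediate, since then the two combinations are $2\mathcal{V}_i$ and $0$.
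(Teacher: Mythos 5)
Your proof is correct and follows essentially the same route as the paper's: both reduce all quantities to the $i_0$-th factors via the factorization of norms and inner products of rank-one tensors, split into cases according to which mode carries the orthogonality, and bound the surviving cross term by polarization applied to the normalized sums and differences lying in $\S_2$ (the paper packages this polarization step as its Lemma~\ref{lem: preserves angles}, which you inline). Your explicit check that $-{\bf v}_j^{i_0}\in S_1$, so that the normalized difference also lies in $\S_2$, spells out a point the paper leaves implicit.
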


The next auxiliary lemma gives a formula for the tensor $\mathcal{Y}_t$ obtained by applying the first $t$ of the maps  $\mathcal{A}_i$. In particular, it shows that $\mathcal{Y}_t$  can be written as an orthogonal linear combination of $r^{\kappa(d'-t)}$ rank-one tensors of unit norm. Moreover, for each of the terms in this sum, the $(t+1)$-st component vector is ${\bf \accentset{\circ}{u}}_{j_{t+1}}^{t+1}$ as defined in \eqref{eqn: reshape HOSVD} and therefore is an element of the set $S_1$.

\begin{lemma}\label{lem: Ytform}
Let $\mathcal{Y}_0=\accentset{\circ}{\mathcal{X}}$ and 
$
\mathcal{Y}_t \coloneqq \mathcal{A}_t(\mathcal{Y}_{t-1})= \mathcal{Y}_{t-1}\times_t A_t
$ for all $t = 1, \ldots, d'.$ Then, for each $1\leq t\leq d'-1$, 
we may write 
\begin{equation}\label{eqn: Ytform}
\mathcal{Y}_t = \sum_{j_{d'}=1}^{r^\kappa}\ldots\sum_{j_{t+1}=1}^{r^{\kappa}} {\mathcal C}_t(j_{t+1},\ldots,j_{d'})\left[ \left(\outprod_{i=1}^t {\bf v}^i_{j_{t+1},\ldots,j_{d'}}\right) \outprod \left(\outprod_{i=t+1}^{d'}{\bf \accentset{\circ}{u}}_{j_i}^i\right)\right],
\end{equation}
where $\|{\bf v}^i_{j_{t+1},\ldots,j_{d'}}\|=1$ for all valid index subsets. (We note that the vectors ${\bf v}^i_{j_{t+1},\ldots,j_{d'}}$ implicitly depend on $t$. However, we suppress this dependence in order to avoid  cumbersome notation.) 
\end{lemma}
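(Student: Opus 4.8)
The plan is to prove the formula \eqref{eqn: Ytform} by induction on $t$. The base case $t=0$ is exactly the reshaped HOSVD expansion \eqref{eqn: reshape HOSVD}, where $\mathcal{Y}_0 = \accentset{\circ}{\mathcal{X}} = \sum_{j_{d'}=1}^{r^\kappa}\ldots\sum_{j_1=1}^{r^\kappa} \accentset{\circ}{\mathcal C}(j_1,\ldots,j_{d'}) \outprod_{i=1}^{d'}\accentset{\circ}{\bf u}^i_{j_i}$. Here every component vector is one of the $\accentset{\circ}{\bf u}^i_{j_i}$, which lie in $S_1$ and have unit norm, so the claimed structure holds with ${\mathcal C}_0 = \accentset{\circ}{\mathcal C}$ and no reduced modes yet contracted.

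For the inductive step, I would assume \eqref{eqn: Ytform} holds for $t-1$ and apply the map $\mathcal{A}_t$, i.e.\ contract the $t$-th mode against $A_t$. Using the action of a $j$-mode product on a rank-one tensor \eqref{eqn:modewise action}, applying $\times_t A_t$ to each summand replaces the $t$-th factor $\accentset{\circ}{\bf u}^t_{j_t}$ (still one of the reshaped basis vectors at stage $t-1$) by $A_t \accentset{\circ}{\bf u}^t_{j_t}$, and leaves all other factors untouched. The key point is that $\mathcal{Y}_{t-1}$ is, by the inductive hypothesis, an orthogonal sum in which mode $t$ carries exactly the orthonormal basis $\{\accentset{\circ}{\bf u}^t_{j_t}\}_{j_t=1}^{r^\kappa}$. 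Because the summands at stage $t-1$ are pairwise orthogonal and share a clean tensor-product structure, I can group terms over the index $j_t$: summing the stage-$(t-1)$ coefficients $\mathcal{C}_{t-1}(j_t,\ldots,j_{d'})$ against $A_t \accentset{\circ}{\bf u}^t_{j_t}$ produces, for each fixed multi-index $(j_{t+1},\ldots,j_{d'})$, a single new vector in mode $t$ together with a scalar normalizing constant. I would define ${\mathcal C}_t(j_{t+1},\ldots,j_{d'})$ to be that scalar (the norm of the contracted combination) and ${\bf v}^t_{j_{t+1},\ldots,j_{d'}}$ to be the corresponding unit vector, thereby absorbing the $A_t$ action into a unit-norm factor in mode $t$. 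The factors in modes $1,\ldots,t-1$ were already arbitrary unit vectors $\mathbf{v}^i_{\cdots}$ by hypothesis and remain so, while modes $t+1,\ldots,d'$ still carry the untouched $\accentset{\circ}{\bf u}^i_{j_i} \in S_1$.

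The main obstacle — and the part deserving the most care — is verifying that after this regrouping the resulting $r^{\kappa(d'-t)}$ tensors remain \emph{pairwise orthogonal} and that the coefficient tensor ${\mathcal C}_t$ inherits the orthogonal-subtensor property, so that the induction can continue. This is where the orthogonal-subtensor structure \eqref{eqn: orthogonal subtensors} of the core is essential: orthogonality of the stage-$t$ summands over the remaining indices $(j_{t+1},\ldots,j_{d'})$ follows because the reshaped factors $\accentset{\circ}{\bf u}^i_{j_i}$ in the uncompressed modes $i>t$ are orthonormal and the core has orthogonal subtensors, so cross terms involving distinct values of any remaining index vanish regardless of what the compression did to mode $t$. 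I would check explicitly that contracting mode $t$ and taking inner products between two summands indexed by $(j_{t+1},\ldots,j_{d'})$ and $(j'_{t+1},\ldots,j'_{d'})$ factorizes through the orthogonality of the surviving reshaped basis vectors, forcing the inner product to vanish unless the indices agree. Finally, I would confirm that the new unit vectors ${\bf v}^t_{j_{t+1},\ldots,j_{d'}}$ are well-defined (i.e.\ the contracted combinations are nonzero, which follows since the relevant subtensor of $\mathcal{C}_{t-1}$ is nonzero and $A_t$ acts invertibly enough on the span via its RIP property), completing the inductive step and hence the lemma.
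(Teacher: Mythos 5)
Your overall plan --- induction on $t$, base case $t=0$ from \eqref{eqn: reshape HOSVD}, inductive step applying $\mathcal{A}_t$ via \eqref{eqn:modewise action} and regrouping over the contracted index $j_t$ --- is the same as the paper's, but the crucial regrouping step in your inductive argument fails as you describe it. You assert that, for each fixed $(j_{t+1},\ldots,j_{d'})$, summing over $j_t$ produces ``a single new vector in mode $t$'' while ``the factors in modes $1,\ldots,t-1$ \ldots remain'' as they were. By your own inductive hypothesis those factors are ${\bf v}^i_{j_t,j_{t+1},\ldots,j_{d'}}$: they carry the subscript $j_t$. The object you must normalize is therefore
\begin{equation*}
\tilde{\mathcal{V}}_{j_{t+1},\ldots,j_{d'}} \;=\; \sum_{j_t=1}^{r^\kappa}{\mathcal C}_{t-1}(j_t,j_{t+1},\ldots,j_{d'})\left(\outprod_{i=1}^{t-1}{\bf v}^i_{j_t,\ldots,j_{d'}}\right)\outprod\left(A_t\,\accentset{\circ}{\bf u}^t_{j_t}\right),
\end{equation*}
a sum of rank-one $t$-mode tensors whose mode-$1,\ldots,t-1$ parts \emph{vary with} $j_t$. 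You cannot pull those parts out of the sum: it is incoherent to say they ``remain'' (there are $r^\kappa$ different sets of them, one per value of the summed-out index), and the resulting block is in general not rank one at all. Already for $t=2$ it is a sum of $r^\kappa$ rank-one matrices with distinct left factors, hence generically of rank $\min(r^\kappa,m)>1$. Since the blocks attached to $\outprod_{i=t+1}^{d'}\accentset{\circ}{\bf u}^i_{j_i}$ are uniquely determined (by contracting $\mathcal{Y}_t$ against these orthonormal tensors), no choice of a single unit vector ${\bf v}^t_{j_{t+1},\ldots,j_{d'}}$ can repair this.

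What the paper's proof actually does at this point is the only thing that can work: it absorbs the \emph{entire} sum $\tilde{\mathcal{V}}_{j_{t+1},\ldots,j_{d'}}$ into one object, sets ${\mathcal C}_t(j_{t+1},\ldots,j_{d'})=\|\tilde{\mathcal{V}}_{j_{t+1},\ldots,j_{d'}}\|$, and lets the normalized $t$-mode tensor play the role of the block written as $\outprod_{i=1}^{t}{\bf v}^i_{j_{t+1},\ldots,j_{d'}}$ in \eqref{eqn: Ytform}; that outer-product notation is an abuse, and the modes-$1,\ldots,t$ block is in general \emph{not} a product of per-mode unit vectors. This weaker, grouped form is all that is needed downstream: in the proof of Theorem \ref{thm: first compression}, Lemmas \ref{lem: orthogonal sums} and \ref{lem: A1rank1} only use that each summand factors as (unit-norm block on modes $1,\ldots,t$) $\outprod$ (untouched vectors $\accentset{\circ}{\bf u}^i_{j_i}$), that the mode-$(t+1)$ factor lies in $S_1$, and that summands with distinct indices are orthogonal --- which, as you correctly note, is automatic from the orthonormality of the untouched $\accentset{\circ}{\bf u}^i_{j_i}$. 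Two smaller points: the lemma claims no orthogonal-subtensor property for ${\mathcal C}_t$, so that worry is moot; and no RIP hypothesis on $A_t$ is assumed (or needed) in this purely algebraic lemma --- if $\tilde{\mathcal{V}}_{j_{t+1},\ldots,j_{d'}}=0$, one simply sets ${\mathcal C}_t(j_{t+1},\ldots,j_{d'})=0$ and takes an arbitrary unit-norm block.
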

We are now ready to prove Theorem~\ref{thm: first compression}.
\begin{proof}[Proof of Theorem~\ref{thm: first compression}]
First, note that we can write $\mathcal{Y}_0=\accentset{\circ}{\mathcal{X}}$ as an orthogonal linear combination of $r^d$ norm one terms of the form 
\begin{equation*}
    \outprod_{i=1}^{d'}{\bf \accentset{\circ}{u}}^i_{j_i},\quad 1\leq j_i\leq r^\kappa,
\end{equation*}
where each of the vectors $\accentset{\circ}{\bf u}_{j_i}^i,\:1\leq j_i\leq r^\kappa,$ are obtained as the vectorization of a rank-one $\kappa$-mode tensor.
Therefore,
since $A_1$ satisfies RIP$(\eps, \mathcal{S}_{1,2})$, Lemma \ref{lem: A1rank1} allows us to apply Lemma \ref{lem: orthogonal sums} to see
\begin{equation}\label{eqn: Y1}
    \|\mathcal{A}_1(\accentset{\circ}{\mathcal{X}})\|\leq (1+2r^d\epsilon)\|\accentset{\circ}{\mathcal{X}}\|.
\end{equation}
Next, we apply Lemma \ref{lem: Ytform} and note that there are $r^{\kappa(d'-t)}$ terms appearing in the sum in \eqref{eqn: Ytform}. 
Therefore, Lemmas \ref{lem: orthogonal sums} and \ref{lem: A1rank1}  allow us to see that  
\begin{equation}\label{eqn: Yt+1}
    \|\mathcal{Y}_{t+1}\|\leq \left(1+2r^{\kappa(d'-t)}\epsilon\right) \|\mathcal{Y}_{t}\|
\end{equation}
for $1\leq t\leq d'-1$. Since $\mathcal{Y}_{d'}=\mathcal{A}(\accentset{\circ}{\mathcal{X}}),$ combining \eqref{eqn: Y1} and \eqref{eqn: Yt+1}
implies that the operator $\mathcal{A}$ defined in \eqref{eqn: A} satisfies
\begin{equation*}
    \|\mathcal{A}(\accentset{\circ}{\mathcal{X}})\|\leq\prod_{t=0}^{d'-1} \left(1+2r^{\kappa(d'-t)}\epsilon \right)\|\accentset{\circ}{\mathcal{X}}\|.
\end{equation*}
To complete the upper bound set $\alpha \coloneqq 2r^d\epsilon$ and note that $2\alpha<1$. Then, since $r\geq 2$ 
\begin{equation*}
\begin{aligned}
    \prod_{t=0}^{d'-1} \left(1+2r^{\kappa(d'-t)}\epsilon\right)&=\prod_{t=0}^{d'-1} (1+\alpha r^{-t\kappa})\\&=  
    1 + \alpha \sum_{t = 0}^{d' - 1} r^{-t \kappa} + \alpha^2 \sum_{\substack{t_1, t_2 = 0: \\ t_1 < t_2}}^{d' - 1} r^{-(t_1 + t_2) \kappa} + \ldots + \alpha^{d'}r^{-(1 + \ldots+ (d'-1))\kappa}\\
    &\le 1 + \alpha \sum_{t = 0}^{d' - 1} r^{-t \kappa} + \left(\alpha \sum_{t = 0}^{d' - 1} r^{-t \kappa} \right)^2 + \ldots + \left(\alpha \sum_{t = 0}^{d' - 1} r^{-t \kappa} \right)^{d'}
    \\
    &\le 1 + \alpha \sum_{t = 0}^{\infty} 2^{-t } + \left(\alpha \sum_{t = 0}^{\infty} 2^{-t } \right)^2 + \ldots + \left(\alpha \sum_{t = 0}^{\infty} 2^{-t } \right)^{d'}
    \\
    &\le 1 + 2\alpha  + (2\alpha )^2 + \ldots + (2\alpha )^{d'}
    \\
    &\le 1+2d'\alpha\\
    &= 1 + 4d'r^d \eps
\end{aligned}
\end{equation*}
which completes the proof of the upper bound.  The proof of the  lower bound is nearly identical.
\end{proof}

We will now prove  Corollaries~\ref{cor:sub-gaussian model} and \ref{cor:sors}.

\begin{proof}[Proof of Corollary~\ref{cor:sub-gaussian model}]
 We first note that  Lemma~\ref{lemma: cover} implies that
the integral from \eqref{m-bound-prop} can be bounded as
\begin{equation}\label{eqn: covering integral bound}
\int_0^1\sqrt{\ln{\mathcal{N}(\mathcal{S}_{1,2}, t)}}~dt \le
C\int_0^1\sqrt{\kappa n\ln(6\kappa/t)} ~dt \le
C\sqrt{\kappa n \ln(\kappa)}.
\end{equation}
Since  the set $\S_{1,2}$ contains (reshaped) unit norm $\kappa$-tensors, the assumption that $m$ satisfies \eqref{m_first_bound} implies that each of the $A_i$ will satisfy the assumptions of Proposition~\ref{thm: sup_chaos} with
$\eta/d'$ in place of $\eta$ and
$\eps^{-2} = (d/\kappa)^2r^{2d}\delta^{-2}$. Therefore, by the union bound, we have that all of the $A_i$ will satisfy RIP$(\eps, \mathcal{S}_{1,2})$ with probability  at least $1-\eta$,  and so the result now follows from Theorem~\ref{thm: first compression}.
\end{proof}

\begin{proof}[Proof of Corollary~\ref{cor:sors}]  

To estimate the Gaussian width $\S_{1,2}$, we use \eqref{eqn: covering integral bound}, Lemma \ref{lemma: cover}, and  Remark~\ref{Gaussian_width_int} to see that
\begin{equation*}
\begin{aligned}
\omega(\S_{1,2}) &\le \int_0^2\sqrt{\ln{\mathcal{N}(\S_{1,2}, t)}}~dt \le \int_0^1\sqrt{\ln{\mathcal{N}(\S_{1,2}, t)}}~dt + \sqrt{\ln{\mathcal{N}(\S_{1,2}, 1)}}\\
&\le C''\sqrt{\kappa n \ln(\kappa)} + C'''\sqrt{\kappa n \ln(\kappa)} = C\sqrt{\kappa n \ln(\kappa)}.
\end{aligned}
\end{equation*}
We now observe that the assumption \eqref{m_first_bound_sors} allows us to apply Theorem \ref{thm: sup_chaos_sors} with $\eta/d'$ in place of $\eta$ and $\eps^{-2} = (d/\kappa)^2 r^{2d}\delta^{-2}$. Therefore, analogous to the proof of Corollary \ref{cor:sub-gaussian model}, we conclude the proof of by taking the union bound and applying Theorem~\ref{thm: first compression}. 
\end{proof}
\subsection{Proof of Theorem~\ref{thm: second compression} and Corollaries~\ref{thm: second compression Gaussian} and \ref{thm: second compression Fourier}}
\label{sec: proof of second compression}

The key to proving Theorem~\ref{thm: second compression} is Lemma \ref{lem: AX in B}, which shows that the output of the first compression step $\mathcal{A}(\mathcal{R}(\mathcal{X}))$ lies in a set of nearly orthogonal tensors introduced in Definition \ref{def: fancy B}, and Lemma~\ref{lem: CoverfancyB1} which bounds the covering numbers for such tensors.  We can then get TRIP by applying Proposition \ref{thm: sup_chaos} to the vectorization of   $\mathcal{A}(\mathcal{R}(\mathcal{X}))$.

\begin{lemma}\label{lem: AX in B}
Let $\X$ be a unit norm d-mode tensor with HOSVD rank at most $\mathbf{r}=(r,\ldots,r)$. Let $\mathcal{A}$ be defined as in \eqref{eqn: A},  assume that the  matrices $A_i$ have the $RIP(\eps, \S_{1,2})$ property for all $1\leq i \leq d'$, and that $\delta = 12d'r^d \eps<1$. 
Then, the $d'$-mode tensor  $\mathcal{A}(\accentset{\circ}{\mathcal{X}})\in\mathbb{R}^{m\times\ldots\times  m}$
is an element of the set $\mathcal{B}_{1+\epsilon,\epsilon,1-\delta/3,{\bf r}'}$ (as per Definition~\ref{def: fancy B}).
Additionally, let $\tilde{\mathcal{B}}_{\epsilon,\delta,r} \coloneqq \left\{ \frac{\mathcal{X}}{\| \mathcal{X} \|_{\rm F}} ~\bigg|~\mathcal{X} \in \mathcal{B}_{1+\epsilon,\epsilon,1-\delta/3,{\bf r}'} \right\}$ and suppose that $m \ge r^{d-1}$.  Then, 
        \begin{equation}\begin{aligned}
    \mathcal{N}\left( \tilde{\mathcal{B}}_{\epsilon,\delta,r},\|\cdot\|_{\rm F}, t \right)
    \leq \left(\frac{9(d/\kappa+1)}{t}\right)^{r^d + r^\kappa md/\kappa} \left((1+\eps)^2+\eps r^d\right)^{dmr^\kappa/\kappa}(1 + \eps)^{d^2mr^\kappa \kappa^{-2}} \nonumber
    \end{aligned}\end{equation}
holds for all $t > 0$, and $1 > \delta > 0$.
\end{lemma}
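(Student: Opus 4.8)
The statement has two parts: first, that $\mathcal{A}(\accentset{\circ}{\mathcal{X}})$ lands in $\mathcal{B}_{1+\epsilon,\epsilon,1-\delta/3,\mathbf{r}'}$, and second, a covering estimate for the normalized set. Let me think through both.

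For the membership claim, I have a tensor $\mathcal{X}$ of unit norm with HOSVD rank $\mathbf{r}$. Its reshaping $\accentset{\circ}{\mathcal{X}}$ has the form \eqref{eqn: reshape HOSVD} with HOSVD rank $\mathbf{r}' = (r^\kappa, \dots, r^\kappa)$, orthonormal factor vectors $\accentset{\circ}{\mathbf{u}}^\ell_{j_\ell}$, and core $\accentset{\circ}{\mathcal{C}}$ satisfying $\|\accentset{\circ}{\mathcal{C}}\|_F = \|\accentset{\circ}{\mathcal{X}}\|_F = 1$. After applying $\mathcal{A}$, by \eqref{eqn: newfactors} the new representation has factor vectors $A_i \accentset{\circ}{\mathbf{u}}^i_{j_i}$ and the SAME core $\accentset{\circ}{\mathcal{C}}$. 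I need to verify conditions (a)–(e) of Definition \ref{def: fancy B} for this representation with the stated parameters.

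Let me go through them. Condition (c), $\|\accentset{\circ}{\mathcal{C}}\|_F = 1$, is inherited directly. Condition (d) (orthogonal subtensors) is a property of the core $\accentset{\circ}{\mathcal{C}}$ alone, so it carries over from the HOSVD of $\accentset{\circ}{\mathcal{X}}$ unchanged. For (a), I need $\|A_i \accentset{\circ}{\mathbf{u}}^i_{j_i}\| \le 1+\epsilon$: since each $\accentset{\circ}{\mathbf{u}}^i_{j_i} \in S_1 \subset \S_{1,2}$ (it is a Kronecker product of $\kappa$ unit vectors, hence unit norm and in $S_1$), the RIP$(\epsilon, \S_{1,2})$ property gives $\|A_i \accentset{\circ}{\mathbf{u}}^i_{j_i}\|^2 \le (1+\epsilon)\|\accentset{\circ}{\mathbf{u}}^i_{j_i}\|^2 = 1+\epsilon \le (1+\epsilon)^2$, so $R = 1+\epsilon$ works. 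For (b), I need near-orthogonality of distinct new factors: $|\langle A_i \accentset{\circ}{\mathbf{u}}^i_{k}, A_i \accentset{\circ}{\mathbf{u}}^i_{k'}\rangle| \le \epsilon$ for $k \ne k'$. This is the standard polarization trick: since $\accentset{\circ}{\mathbf{u}}^i_k \perp \accentset{\circ}{\mathbf{u}}^i_{k'}$, the normalized sum $(\accentset{\circ}{\mathbf{u}}^i_k + \accentset{\circ}{\mathbf{u}}^i_{k'})/\sqrt{2}$ lies in $\S_2$, and applying RIP to $\accentset{\circ}{\mathbf{u}}^i_k$, $\accentset{\circ}{\mathbf{u}}^i_{k'}$, and their normalized sum/difference, then expanding $\langle A_i \accentset{\circ}{\mathbf{u}}^i_k, A_i \accentset{\circ}{\mathbf{u}}^i_{k'}\rangle = \tfrac14(\|A_i(\accentset{\circ}{\mathbf{u}}^i_k + \accentset{\circ}{\mathbf{u}}^i_{k'})\|^2 - \|A_i(\accentset{\circ}{\mathbf{u}}^i_k - \accentset{\circ}{\mathbf{u}}^i_{k'})\|^2)$, yields a bound of order $\epsilon$. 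I expect this to produce $|\langle \cdot, \cdot\rangle| \le \epsilon$ (possibly up to reconciling a small constant against the definition of $\S_2$, which uses the $\ell_2$-normalized sum, so a factor of $\|\accentset{\circ}{\mathbf{u}}^i_k + \accentset{\circ}{\mathbf{u}}^i_{k'}\|^2 = 2$ enters cleanly). Finally, condition (e), $\|\mathcal{A}(\accentset{\circ}{\mathcal{X}})\|_F \ge \theta = 1-\delta/3$: this is exactly the TRIP lower bound. The value $\delta = 12 d' r^d \epsilon$ is three times the constant $4d'r^d\epsilon$ from Theorem \ref{thm: first compression}, so applying Theorem \ref{thm: first compression} (or its proof) to the unit-norm $\mathcal{X}$ gives $\|\mathcal{A}(\accentset{\circ}{\mathcal{X}})\|^2 \ge 1 - 4d'r^d\epsilon = 1 - \delta/3$, hence $\|\mathcal{A}(\accentset{\circ}{\mathcal{X}})\|_F \ge \sqrt{1-\delta/3} \ge 1 - \delta/3$. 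This settles the first part.

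For the covering estimate, the plan is to invoke Lemma \ref{lem: CoverfancyB1} with the specific parameters $R = 1+\epsilon$, $\mu = \epsilon$, $\theta = 1-\delta/3$, multilinear rank $\mathbf{r}'=(r^\kappa,\dots,r^\kappa) \in \mathbb{R}^{d'}$, ambient mode-size $n^\kappa$ (so $n \to n^\kappa$) but actually here the tensors live in $\mathbb{R}^{m\times\cdots\times m}$ with $d'=d/\kappa$ modes, so I substitute $n \to m$, $d \to d'$, and $r \to r^\kappa$ into \eqref{eqn: generalize holger cover}. The normalization (passing to $\tilde{\mathcal{B}}$) only shrinks the set, so the covering number cannot increase; a standard argument shows dividing by the norm ($\ge \theta$) is Lipschitz on the relevant region, which is why the $\theta$-free right-hand side of Lemma \ref{lem: CoverfancyB1} still applies after normalization. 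Substituting gives exponents $r^d + r^\kappa m (d/\kappa)$ on the first factor (matching $r^{d'}\cdot{}^{?}$: note $(r^\kappa)^{d'} = r^d$ and $(r^\kappa) m d' = r^\kappa m d/\kappa$), and the remaining factors $(R^2 + \mu r^\kappa \cdot{})^{\cdots}$ become $((1+\epsilon)^2 + \epsilon r^d)^{dmr^\kappa/\kappa}$ and the $R^{(d'-1)d'\cdot{}}$ factor becomes $(1+\epsilon)^{d^2 m r^\kappa \kappa^{-2}}$ after bookkeeping. The main obstacle is purely this substitution bookkeeping: ensuring that every occurrence of $d$, $n$, $r$ in \eqref{eqn: generalize holger cover} is correctly replaced by $d'$, $m$, $r^\kappa$, and that the hypothesis $R \ge 1$ (satisfied since $R = 1+\epsilon$) and the technical condition $m \ge r^{d-1} = (r^\kappa)^{d'-1}$ (which guarantees the rank $r^\kappa$ is at most the mode dimension $m$, a prerequisite for the reshaped tensors to genuinely have the claimed HOSVD structure) are in force. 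I do not expect either part to hide a genuine difficulty; the substance is the polarization computation in condition (b) and the correct parameter translation into Lemma \ref{lem: CoverfancyB1}.
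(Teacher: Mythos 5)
Your proposal follows essentially the same route as the paper's proof: conditions (a) and (b) via RIP$(\eps,\S_{1,2})$ and polarization (the paper packages the polarization step as Lemma~\ref{lem: preserves angles}), conditions (c), (d) by inheritance of the core, condition (e) via Theorem~\ref{thm: first compression} applied with $\delta/3 = 4d'r^d\eps$, and then Lemma~\ref{lem: CoverfancyB1} with the substitutions $n \to m$, $d \to d'$, $r \to r^\kappa$, $R = 1+\eps$, $\mu = \eps$, followed by a rescaling argument for the normalized set. The only points you gloss are correctly resolvable bookkeeping: the hypothesis $m \ge r^{d-1}$ is used to absorb the factor $\left((1+\eps)^2+\eps r^\kappa\right)^{r^d d/(2\kappa)}$ into $\left((1+\eps)^2+\eps r^d\right)^{dmr^\kappa/\kappa}$ (not to ensure rank $\le$ dimension), and the normalization step is not "covering number cannot increase" but rather the Lipschitz estimate $\mathcal{N}(\tilde{\mathcal{B}}_{\eps,\delta,r}, t) \le \mathcal{N}(\mathcal{B}_{1+\eps,\eps,1-\delta/3,\mathbf{r}'}, 2t/3)$, which is exactly what converts the constant $6$ from Lemma~\ref{lem: CoverfancyB1} into the $9$ appearing in the statement.
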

\begin{proof}
As in \eqref{eqn: reshape HOSVD} we set $\accentset{\circ}{\mathcal{X}} = \mathcal{R}(\mathcal{X})$, and write  \begin{equation*}
    \accentset{\circ}{\mathcal{X}} = \sum_{j_{d'}=1}^{r^{\kappa}}\ldots\sum_{j_1=1}^{r^{\kappa}} \accentset{\circ}{C}(j_1,\ldots,j_{d'}) \outprod_{\ell=1}^{d'}\accentset{\circ}{\mathbf{u}}^{\ell}_{j_{\ell}},
    \end{equation*}
where, for all $i$, the vectors  $\{\accentset{\circ}{\mathbf{u}}^{i}_{j_{i}}\}_{j_i=1}^{r^\kappa}$ are mutually orthogonal.  Recall that by 
 $\eqref{eqn: newfactors}$, we have
\begin{equation*}
    \mathcal{A}(\accentset{\circ}{\mathcal{X}}) = \sum_{j_{d'}=1}^{r^{\kappa}}\ldots\sum_{j_1=1}^{r^{\kappa}} \accentset{\circ}{C}(j_1,\ldots,j_{d'}) \left(A_1{\bf\accentset{\circ}{u}}^1_{j_1} \outprod \ldots \outprod A_{d'}{\bf \accentset{\circ}{u}}^{d'}_{j_{d'}} \right).
\end{equation*} 
By RIP$(\eps, \S_{1,2})$ property, $\|A_{i}\accentset{\circ}{\mathbf{u}}^{i}_{j_{i}}\|\leq (1+\epsilon)$
for all $1\leq i \leq d'$ and all $1\leq j_i\leq r^\kappa.$
Additionally, by Lemma \ref{lem: preserves angles} (stated in Appendix A) we have  
\begin{equation*}
   |\langle A_{i}{\bf\accentset{\circ}{u}}^{i}_{j_{i}},    A_{i}{\bf\accentset{\circ}{u}}^{i}_{j'_{i}} \rangle| < \epsilon
\end{equation*} 
for all $1\leq i\leq d'$ and all $1\leq j_i,j_i'\leq r^\kappa$ such that $j'_i\neq j_i$. The properties of the core tensor (c) and (d) are preserved under the action of $\mathcal{A}$ and are satisfied for a unit norm, tensor in the HOSVD standard form.  Finally, Theorem \ref{thm: first compression} implies that $\mathcal{A}$ satisfies the TRIP$(\delta/3,\mathbf{r})$ property which in turn guarantees that  $\mathcal{A}(\accentset{\circ}{\mathcal{X}})$ will also satisfy property (e) of Definition~\ref{def: fancy B} with $\theta = 1 - \delta/3$.

Applying Lemma~\ref{lem: CoverfancyB1} (with $R=(1+\eps)$ and  $m$, $d'$, $r^\kappa,$ and $t$ in place of $n,d,r,$ and $\epsilon$) and the assumption $m \ge r^{d-1}$ we obtain 
        \begin{equation}\begin{aligned}
    \mathcal{N}\left(\mathcal{B}_{1+\epsilon,\epsilon,1-\delta/3,{\bf r}'},\|\cdot\|_{\rm F},t \right)
    \leq \left(\frac{6(d/\kappa+1)}{t}\right)^{r^d + r^\kappa md/\kappa} \left((1+\eps)^2+\eps r^d\right)^{dmr^\kappa/\kappa}(1 + \eps)^{d^2mr^\kappa \kappa^{-2}}. \nonumber
    \end{aligned}\end{equation}
Furthermore, a geometric rescaling argument implies that
$$\mathcal{N}\left( \tilde{\mathcal{B}}_{\epsilon,\delta,r},\|\cdot\|_{\rm F}, t \right) = \mathcal{N}\left( \left\{ \frac{\mathcal{X}}{\| \mathcal{X} \|_{\rm F}} ~\bigg|~\mathcal{X} \in \mathcal{B}_{1+\epsilon,\epsilon,1-\delta/3,{\bf r}'} \right\},\|\cdot\|_{\rm F}, t \right) \leq  \mathcal{N}\left(\mathcal{B}_{1+\epsilon,\epsilon,1-\delta/3,{\bf r}'},\|\cdot\|_{\rm F}, 2t/3 \right)$$
hols for all $\delta \leq 1$.  The stated result now follows.
\end{proof}

Theorem~\ref{thm: second compression} now follows from a direct application of Lemma~\ref{lem: AX in B} and Theorem~\ref{thm: first compression}.

\begin{proof}[Proof of Theorem~\ref{thm: second compression}]
Theorem \ref{thm: first compression} implies that $\mathcal{A}$ satisfies the TRIP$(\delta/3,\mathbf{r})$ property and Lemma ~\ref{lem: AX in B} implies that $\mathcal{A}(\accentset{\circ}{\mathcal{X}})$ belongs to the set $\mathcal{B}_{1+\epsilon,\epsilon,1-\delta/3,{\bf r}'}$.
Therefore, we have 
    \begin{equation*}
     \|\mathcal{A}_{2nd}(\mathcal{X})\|^2\leq (1+\delta/3)\|\mathcal{A}(\accentset{\circ}{\mathcal{X}})\|\leq (1+\delta/3)^2\|\mathcal{X}\|^2\leq     (1+\delta)\|\mathcal{X}\|^2, 
\end{equation*}
and 
\begin{equation*}
 (1-\delta)\|\mathcal{X}\|^2    \leq (1-\delta/3)^2\|\mathcal{X}\|^2\leq  (1-\delta/3)\|\mathcal{A}(\accentset{\circ}{\mathcal{X}})\|\leq \|\mathcal{A}_{2nd}(\mathcal{X})\|^2. 
\end{equation*}
\end{proof}

The proofs of Corollaries~\ref{thm: second compression Gaussian} and \ref{thm: second compression Fourier} require an additional estimate bounding the Gaussian width of $\tilde{\mathcal{B}}_{\epsilon,\delta,r}$ from Lemma~\ref{lem: AX in B}.

\begin{remark}\label{dudley_b_estimate} Let $\tilde{\mathcal{B}}_{\epsilon,\delta,r} \subset \R^{m\times \ldots\times m}$ be the set of $d'$-mode tensors defined as per Lemma~\ref{lem: AX in B} and Definition~\ref{def: fancy B}.  We can see that
\begin{equation*}\begin{aligned}
        \int_0^1\sqrt{\ln{\mathcal{N}\left(\tilde{\mathcal{B}}_{\epsilon,\delta,r},t \right)}}~dt &\leq \int_0^2\sqrt{\ln{\mathcal{N}\left(\tilde{\mathcal{B}}_{\epsilon,\delta,r},t \right)}}~dt \leq \int_0^1\sqrt{\ln{\mathcal{N}\left(\tilde{\mathcal{B}}_{\epsilon,\delta,r},t \right)}}~dt + \sqrt{\ln{\mathcal{N}\left(\tilde{\mathcal{B}}_{\epsilon,\delta,r},1 \right)}}.
\end{aligned}\end{equation*}
If $m\geq r^{d-1},$ then we may apply Lemma~\ref{lem: AX in B} and use the concavity of $\sqrt{\cdot}$ to see that 
\begin{equation*}\begin{aligned}   \int_0^1\sqrt{\ln{\mathcal{N}\left(\tilde{\mathcal{B}}_{\epsilon,\delta,r},t \right)}}~dt
        \leq \int_0^1\sqrt{\left(r^d + \frac{r^\kappa md}{\kappa}\right)\ln\left(\frac{9(d/\kappa+1)}{t}\right)}~&dt \\
        \quad\quad\quad+ \int_0^1\sqrt{{\frac{dmr^\kappa}{\kappa}}\ln \left(\left(1+\epsilon\right)^2+\epsilon r^d\right)}~dt+\int_0^1&\sqrt{ {\frac{d^2mr^\kappa}{ \kappa^{2}}}\ln\left(1+\epsilon \right)}~dt\\
        \le C \sqrt{r^d+\frac{dmr^\kappa}{\kappa}}\sqrt{\ln\left(\frac{d}{\kappa}+1\right)} + \sqrt{{\frac{dmr^\kappa}{\kappa}}\ln \left((1+\epsilon)^2+\epsilon r^d\right)} + &\sqrt{ {\frac{d^2mr^\kappa}{ \kappa^{2}}}\ln\left(1+\epsilon \right)}.
\end{aligned}\end{equation*}
Furthermore, we also have
\begin{equation*}\begin{aligned}   \sqrt{\ln{\mathcal{N}\left(\tilde{\mathcal{B}}_{\epsilon,\delta,r},1 \right)}}
        \leq C' \sqrt{r^d+\frac{dmr^\kappa}{\kappa}}\sqrt{\ln\left(\frac{d}{\kappa}+1\right)}& \\
        \quad\quad\quad+ \sqrt{{\frac{dmr^\kappa}{\kappa}}\ln \left((1+\epsilon)^2+\epsilon r^d\right)} + &\sqrt{ {\frac{d^2mr^\kappa}{ \kappa^{2}}}\ln\left(1+\epsilon\right)}.
\end{aligned}\end{equation*}
\end{remark}

\begin{proof}[Proof of Corollary~\ref{thm: second compression Gaussian}] Similar to the proof of  Corollary~\ref{cor:sub-gaussian model}, the assumption that $m$ satisfies $\eqref{m_first_bound_two_step}$, 
 implies that with probability at least $1-\eta/2$, all of
the $A_i$ satisfy the RIP$\left(\eps, \mathcal{S}_{1,2} \right)$  property, 
$\delta = 12d'r^d \eps<1$.
The assumption \eqref{m_first_bound_two_step} also implies $m\geq r^{d-1}$. Therefore, by Proposition~\ref{thm: sup_chaos}, the estimate for the Dudley-type integral from Remark~\ref{dudley_b_estimate}, and the linearity of $A_{\text{2nd}} \circ {\rm vect}$ we 
see that $A_{\text{2nd}} \circ {\rm vect}$ satisfies the  RIP$\left(\delta/3,\mathcal{B}_{1+\epsilon,\epsilon,1-\delta/3,\mathbf{r'}} \right)$ property
with probability at least $1-\eta/2$ as long as
\begin{equation*}
m_\text{2nd}\geq C\delta^{-2}\max\left\{\left(r^d+\frac{dmr^\kappa}{\kappa} \right)\ln\left(\frac{d}{\kappa}+1\right) + {\frac{dmr^\kappa}{\kappa}}\ln \left(1+\delta r^d\right) + {\frac{d^2mr^\kappa\delta}{ \kappa^{2}}},\ln\left(\frac{2}{\eta}\right)\right\}.
\end{equation*}
The result now follows  by applying Theorem \ref{thm: second compression}.
\end{proof}

\begin{proof}[Proof of Corollary~\ref{thm: second compression Fourier}]
Repeating the arguments used in the proof of Corollary~\ref{cor:sors}, we see that the assumption that  $m$ satisfies $\eqref{eqn: m_big_first_fouri_ber}$, implies that with probability at least $1-\eta/2$, all of
the $A_i$ satisfy the RIP$\left(\eps, \mathcal{S}_{1,2} \right)$  property with 
$\delta = 12d'r^d \eps<1$. 
We also note that \eqref{eqn: m_big_first_fouri_ber} implies that $m\geq r^{d-1}$ so that we may apply Remark~\ref{dudley_b_estimate}. Thus, \eqref{eqn: m2_big_Fourier} and Theorem \ref{thm: sup_chaos_sors} imply that $A_{\text{2nd}}$ satisfies the  RIP$\left(\delta/3,\mathcal{B}_{1+\epsilon,\epsilon,1-\delta/3,\mathbf{r'}} \right)$ property with probability at least $1-\eta/2.$ Therefore, the result now follows from Theorem \ref{thm: second compression}. 
\end{proof}

\section{Conclusion and Future Work}\label{sec: conclusion}

In this paper, we have proved that several modewise linear maps (with sub-Gaussian and subsampled from the orthogonal ensemble -- e.g., discrete Fourier -- measurements) have the TRIP for tensors with low-rank HOSVD decompositions. Our measurements maps require significantly less memory than previous works such as \cite{rauhut2017low} and \cite{grotheer2019iterative} that establish TRIP for vectorized measurements. We also note that unlike other closely related works such as \cite{jin2019faster} and \cite{iwen2019lower} that establish modewise Johnson Lindenstrauss embeddings, our results hold for all low-HOSVD rank tensors whereas previous work focuses on finite sets or for tensors lying in a low-dimensional vector space.  In our experiments, we have demonstrated that we are able to recover low-rank tensors from a compressed representation produced via two-step modewise measurements. Moreover, we show that we are able to achieve such recovery from a lower compressed dimension than with purely vectorized measurements, establishing yet another advantage. 

A natural direction for future work would involve extending these results to other tensor formats including, e.g., tensors which admit compact tensor train, (hierarchical) Tucker, and/or CP decompositions instead.  Additional projects of value might include parallel implementations of the TIHT algorithm using modewise maps that fully leverage their structure, as well as more memory efficient TIHT variants which reconstruct the factors of a given low-rank tensor from its measurements instead of reconstructing the entire tensor in uncompressed form.  Indeed, such a memory efficient TIHT implementation in combination with using modewise measurements would allow for memory efficient low-rank tensor reconstruction from the measurement stage all the way through reconstruction of the final approximation in compressed form.

\appendix
\section{The Proof of Lemmas~\ref{lem: orthogonal sums}, \ref{lem: A1rank1}, and \ref{lem: Ytform}}\label{sec: The proof of orthogonal sums}
The proof of Lemma~\ref{lem: orthogonal sums} requires the following well-known auxiliary lemma. For completeness, we provide a short proof below.
\begin{lemma}\label{lem: preserves angles}
Let $\mathcal{V}$ be an inner product space and let $\mathcal{L}$ be a linear operator on $\mathcal{V}.$ Let $\{{\bf v}_1,\ldots, {\bf v}_K\}$ be a finite orthonormal system in $\mathcal{V}$ (that is, $\|{\bf v}_i\|=1$ for all $i$ and  $\langle {\bf v}_i, {\bf v}_j\rangle=0$ for all $i\neq j$). Suppose that 
\begin{equation*}
    (1-\epsilon)\|{\bf v}_i\pm {\bf v}_j\|^2\leq \|\mathcal{L}({\bf v}_i\pm {\bf v}_j)\|^2\leq (1+\epsilon)\|{\bf v}_i\pm {\bf v}_j\|^2\quad\text{for all }1\leq i,j\leq K,i\neq j.
\end{equation*}
Then \begin{equation*}
    |\langle \mathcal{L}{\bf v}_i,\mathcal{L}{\bf v}_j\rangle|\leq \epsilon\quad\text{for all }i\neq j.
\end{equation*}
\end{lemma}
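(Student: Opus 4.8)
The plan is to reduce everything to the polarization identity, which converts the inner product $\langle \mathcal{L}{\bf v}_i,\mathcal{L}{\bf v}_j\rangle$ into a difference of the two squared norms that the hypothesis already controls. Specifically, since $\mathcal{L}$ is linear, for any $i\neq j$ we have
\begin{equation*}
\langle \mathcal{L}{\bf v}_i,\mathcal{L}{\bf v}_j\rangle = \frac{1}{4}\left(\|\mathcal{L}({\bf v}_i+{\bf v}_j)\|^2 - \|\mathcal{L}({\bf v}_i-{\bf v}_j)\|^2\right).
\end{equation*}
This is the only structural input needed; the rest is bookkeeping with the assumed two-sided bounds.

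The second observation I would record is that orthonormality of $\{{\bf v}_1,\ldots,{\bf v}_K\}$ pins down the norms $\|{\bf v}_i\pm{\bf v}_j\|^2$ exactly. Indeed, for $i\neq j$, expanding gives $\|{\bf v}_i\pm{\bf v}_j\|^2=\|{\bf v}_i\|^2\pm 2\langle {\bf v}_i,{\bf v}_j\rangle+\|{\bf v}_j\|^2=2$. Feeding this into the hypothesis yields the clean bounds $\|\mathcal{L}({\bf v}_i+{\bf v}_j)\|^2\leq 2(1+\epsilon)$ and $\|\mathcal{L}({\bf v}_i-{\bf v}_j)\|^2\geq 2(1-\epsilon)$, together with the symmetric pair obtained by swapping the roles of the $+$ and $-$ terms.

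To finish, I would substitute these into the polarization identity from both directions. The upper estimate gives $\langle \mathcal{L}{\bf v}_i,\mathcal{L}{\bf v}_j\rangle\leq \tfrac{1}{4}\big(2(1+\epsilon)-2(1-\epsilon)\big)=\epsilon$, and applying the analogous bounds with the $+$ and $-$ summands interchanged gives the matching lower bound $\langle \mathcal{L}{\bf v}_i,\mathcal{L}{\bf v}_j\rangle\geq -\epsilon$, so that $|\langle \mathcal{L}{\bf v}_i,\mathcal{L}{\bf v}_j\rangle|\leq\epsilon$ as claimed. I do not anticipate a genuine obstacle here: the result is essentially a one-line consequence of polarization once the normalization $\|{\bf v}_i\pm{\bf v}_j\|^2=2$ is noted, and the only point requiring a small amount of care is making sure both the upper and lower bounds on the inner product are extracted (rather than just one), which is what upgrades the conclusion from a one-sided estimate to the absolute-value bound.
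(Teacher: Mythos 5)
Your proposal is correct and follows essentially the same route as the paper: both use the polarization identity $4\langle \mathcal{L}{\bf v}_i,\mathcal{L}{\bf v}_j\rangle = \|\mathcal{L}({\bf v}_i+{\bf v}_j)\|^2-\|\mathcal{L}({\bf v}_i-{\bf v}_j)\|^2$, apply the two-sided hypothesis to the two terms, and use orthonormality (the paper expands $\|{\bf v}_i\pm{\bf v}_j\|^2$ symbolically rather than first noting it equals $2$, but this is the same computation). Your explicit treatment of both the upper and lower bounds matches the paper's ``the reverse inequality is similar.''
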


\begin{proof}
 Let $i\neq j.$ Then,
 \begin{equation*}
\begin{aligned}
4\langle \mathcal{L}{\bf v}_i,\mathcal{L}{\bf v}_j\rangle &= \|\mathcal{L}({\bf v}_i+{\bf v}_j)\|^2-\|\mathcal{L}({\bf v}_i-{\bf v}_j)\|^2\\
&\leq (1+\epsilon) \| {\bf v}_i+{\bf v}_j\|^2 - (1-\epsilon)\| {\bf v}_i-{\bf v}_j\|^2\\
&= (1+\epsilon)(\| {\bf v}_i\|^2+\|{\bf v}_j\|^2+2\langle {\bf v}_i, {\bf v}_j\rangle)-
(1-\epsilon)(\|{\bf v}_i\|^2+\|{\bf v}_j\|^2-2\langle {\bf v}_i,{\bf v}_j\rangle)\\
&=4 \langle {\bf v}_i, {\bf v}_j\rangle+2\epsilon(\| {\bf v}_1\|^2+\| {\bf v}_2\|^2)\\
&=4\epsilon,
\end{aligned}
\end{equation*}
where the last inequality follows from the fact that $\|{\bf v}_1\|^2=\| {\bf v}_2\|^2=1$ and $\langle {\bf v}_i, {\bf v}_j\rangle = 0$. Thus, $\langle \mathcal{L} {\bf v}_i,\mathcal{L} {\bf v}_j\rangle \leq \epsilon.$ The reverse inequality is similar.
\end{proof}
We may now prove Lemma \ref{lem: orthogonal sums}.

\begin{proof}[The Proof of Lemma \ref{lem: orthogonal sums}]
We argue by induction on $K.$ When $K=1$, the result is immediate from  \eqref{as: JLvsp} and the fact that $\mathcal{L}$ is linear. Now assume the result is true for $K-1.$ 
An arbitrary element of $\mathcal{U}$ may be written as 
\begin{equation*}
    {\bf w}=\sum_{i=1}^Kc_i {\bf v}_i
\end{equation*}
where $c_1,\ldots,c_K$ are scalars. 
We will write  ${\bf w} = {\bf w}_{K-1}+c_K {\bf v}_K$, where 
\begin{equation*}
    {\bf w}_{K-1}\coloneqq \sum_{i=1}^{K-1}c_i{\bf v}_i.
\end{equation*}
By construction, we have
\begin{equation*}\begin{aligned}
    \|\mathcal{L}{\bf w}\|^2 = \|\mathcal{L}{\bf w}_{K-1}\|^2 + \|c_K {\mathcal L} {\bf v}_K\|^2 + 2c_K\langle \mathcal{L}{\bf w}_{K-1},\mathcal{L}{\bf v}_K\rangle.
\end{aligned}
\end{equation*}
We may use the inequality $2ab\leq a^2+b^2$ along with Lemma \ref{lem: preserves angles} to see

\begin{equation}\begin{aligned}
    2c_K\langle \mathcal{L}{\bf w}_{K-1},\mathcal{L}{\bf v}_K\rangle = \sum_{i=1}^{K-1}2c_ic_K\langle \mathcal{L}{\bf v}_i,\mathcal{L}{\bf v}_K\rangle&\leq\epsilon\sum_{i=1}^{K-1}2c_Kc_i\\ &\leq\epsilon\sum_{i=1}^{K-1}(c_K^2+c_i^2)\leq (K-1)\epsilon c_K^2 + \epsilon\sum_{i=1}^{K-1}c_i^2.
\end{aligned}\end{equation}
By the inductive assumption, 
\begin{equation*}
    \|\mathcal{L}{\bf w}_{K-1}\|^2 \leq (1+(K-1)\epsilon)\|{\bf w}_{K-1}\|^2 = (1+(K-1)\epsilon)\sum_{i=1}^{K-1}c_i^2.
\end{equation*}
Thus,
\begin{equation}\begin{aligned}
    \|\mathcal{L}{\bf w}\|^2 \leq (1+(K-1)\epsilon)\sum_{i=1}^{K-1}c_i^2 + (1+\epsilon)c_K^2 &+ (K-1)\epsilon c_K^2 + \epsilon\sum_{i=1}^{K-1}c_i^2\\
    &=(1+K\epsilon)\sum_{i=1}^Kc_i^2 =(1+K\epsilon)\| {\bf w} \|^2.
\end{aligned}\end{equation}
The reverse inequality is similar.
\end{proof}

\begin{proof}[Proof of Lemma~\ref{lem: A1rank1}]
Without loss of generality, we consider the case where $i_0=1$. By assumption, we have ${\bf v}^1_k\in S_1$ for all $1\leq k\leq K$.  Therefore, since $A_1$ is assumed to have the  RIP$(\eps/2, \S_{1,2})$ property, we have
\begin{equation*}
1-\epsilon/2 = (1-\epsilon/2)\|{\bf v}^{1}_k\|^2 \leq \|A_1{\bf v}^{1}_k \|^2 \leq (1+\epsilon/2)\|{\bf v}^{1}_k\|^2 = 1+\epsilon/2.
\end{equation*}
Now, \eqref{as: JLvsp} follows from  the fact that \begin{equation*}
\left\|\mathcal{A}_1 \left(\outprod_{i=1}^{d'}{\bf v}^{i}_k \right) \right\|=\|(A_1{\bf v}^{1}_k) \outprod {\bf v}^{2}_k\outprod\ldots \outprod {\bf v}^{d'}_k\|=\|A_1{\bf v}^{1}_k\|\| {\bf v}^{2}_k\|\ldots \|{\bf v}^{d'}_k\|=\|A_1{\bf v}^{1}_k\|.
\end{equation*}
and the fact that the $\mathbf{v}^i_k$  have norm one.
To prove \eqref{as: JLvsp2dprime}, we let $1\leq k_1,k_2\leq K$ and recall that $\mathcal{V}_{k_1} = \outprod_{i=1}^{d'}{\bf v}^{i}_{k_1}$ and $\mathcal{V}_{k_2} = \outprod_{i=1}^{d'}{\bf v}^{i}_{k_2}$, where each of the ${\bf v}^i_{k_1}$ and ${\bf v}^i_{k_2}$ have norm one. If $k_1=k_2,$ \eqref{as: JLvsp2dprime} follows immediately from \eqref{as: JLvsp}. Otherwise, we may use the assumption that $\{\mathcal{V}_1,\ldots,\mathcal{V}_K\}$ form an orthonormal system to see  \begin{equation*}0=\langle \mathcal{V}_{k_1},\mathcal{V}_{k_2}\rangle =\left\langle \outprod_{i=1}^{d'}{\bf v}^{i}_{k_1} , \outprod_{i=1}^{d'}{\bf v}^{i}_{k_2} \right\rangle =\prod_{i=1}^{d'} \langle {\bf v}^{i}_{k_1}, {\bf v}^{i}_{k_2} \rangle.
\end{equation*}
This implies that there exists an $i$ such that  $\langle {\bf v}^{i}_{k_1} , {\bf v}^{i}_{k_2} \rangle=0$.
If $\langle {\bf v}^{1}_{k_1} , {\bf v}^{1}_{k_2} \rangle=0$, then since $A_1$ satisfies RIP$(\eps, \S_{1,2})$, we may apply Lemma \ref{lem: preserves angles}, and the Cauchy-Schwarz inequality to see that 
\begin{equation*}
\begin{aligned}
    \left| \left \langle \mathcal{A}_1 \left(\outprod_{i=1}^{d'}{\bf v}^{i}_{k_1} \right),\mathcal{A}_1 \left(\outprod_{i=1}^{d'}{\bf v}^{i}_{k_2} \right) \right\rangle \right|&=|\langle A_1{\bf v}^{1}_{k_1}, A_1{\bf v}^{1}_{k_2} \rangle||\langle {\bf v}^{2}_{k_1} , {\bf v}^{2}_{k_2} \rangle|\ldots |\langle {\bf v}^{d'}_{k_1} , {\bf v}^{d'}_{k_2} \rangle|\\&\leq(\epsilon/2) \prod_{i=2}^{d'} \|{\bf v}^{i}_{k_1}\| \|{\bf v}^{i}_{k_2}\| \\&= \eps/2.
\end{aligned}
\end{equation*}
On the other hand, if $\langle {\bf v}^{\ell}_{k_1} , {\bf v}^{\ell}_{k_2} \rangle=0$ for some $\ell\neq 1$ then we have 
\begin{equation*}
  \left| \left \langle \mathcal{A}_1 \left(\outprod_{i=1}^{d'}{\bf v}^{i}_{k_1} \right),\mathcal{A}_1 \left(\outprod_{i=1}^{d'}{\bf v}^{i}_{k_2} \right) \right\rangle \right|=|\langle A_1{\bf v}^{1}_{k_2}, A_1{\bf v}^{1}_{k_2} \rangle||\langle {\bf v}^{2}_{k_1} , {\bf v}^{2}_{k_2} \rangle|\ldots |\langle {\bf v}^{d'}_{k_1} , {\bf v}^{d'}_{k_2} \rangle|=0.
\end{equation*}
Therefore, in either case we have
\begin{equation*}
\begin{aligned}
\left\|\mathcal{A}_1 \left(\outprod_{i=1}^{d'}{\bf v}^{i}_{k_1} + \outprod_{i=1}^{d'}{\bf v}^{i}_{k_2} \right) \right\|^2&= \left\|\mathcal{A}_1 \left( \outprod_{i=1}^{d'}{\bf v}^{i}_{k_1} \right) \right\|^2+ \left\|\mathcal{A}_1 \left(\outprod_{i=1}^{d'}{\bf v}^{i}_{k_2} \right) \right\|^2 + 2 \left\langle \mathcal{A}_1 \left(\outprod_{i=1}^{d'}{\bf v}^{i}_{k_2} \right), \mathcal{A}_1 \left(\outprod_{i=1}^{d'}{\bf v}^{i}_{k_2} \right)  \right\rangle \\
&\leq (1+\epsilon/2) \left( \left\|\outprod_{i=1}^{d'}{\bf v}^{i}_{k_1} \right\|^2+ \left\|\outprod_{i=1}^{d'}{\bf v}^{i}_{k_2}\right\|^2\right) +  \epsilon\\ 
&= (1+\epsilon/2) \left( \left\|\outprod_{i=1}^{d'}{\bf v}^{i}_{k_1} \right\|^2+ \left\|\outprod_{i=1}^{d'}{\bf v}^{i}_{k_2}\right\|^2\right) + (\epsilon/2) \left( \left\|\outprod_{i=1}^{d'}{\bf v}^{i}_{k_1} \right\|^2+ \left\|\outprod_{i=1}^{d'}{\bf v}^{i}_{k_2}\right\|^2\right)\\
&=(1+\epsilon) \left\|\outprod_{i=1}^{d'}{\bf v}^{i}_{k_1} + \outprod_{i=1}^{d'}{\bf v}^{i}_{k_2} \right\|^2,
\end{aligned}
\end{equation*}
where in the last equality, we used orthogonality to see
\begin{equation*}
    \left\|\outprod_{i=1}^{d'}{\bf v}^{i}_{k_1}+\outprod_{i=1}^{d'}{\bf v}^{i}_{k_2} \right\|^2= \left\|\outprod_{i=1}^{d'}{\bf v}^{i}_{k_1} \right\|^2+ \left\|\outprod_{i=1}^{d'}{\bf v}^{i}_{k_2} \right\|^2.
\end{equation*}
The reverse inequality is similar.
\end{proof}

\begin{proof}[Proof of Lemma~\ref{lem: Ytform}]
We argue by induction. When $t = 0$, the decomposition \eqref{eqn: Ytform} follows immediately from  \eqref{eqn: reshape HOSVD} with ${\mathcal C}_0 = \accentset{\circ}{\mathcal C}$ and the fact that  $\mathcal{Y}_0 = \accentset{\circ}{\X}$.

 Now suppose that the result is true for $t$ for some $0\leq t\leq d-2$. Then,
\begin{equation*}
\begin{aligned}
\mathcal{Y}_{t+1} &= \mathcal{Y}_{t} \times_{t+1} \mathcal{A}_{t+1} \\ &= \sum_{j_{d'}=1}^{r^\kappa}\ldots\sum_{j_{t+1}=1}^{r^{\kappa}} {\mathcal C}_t(j_{t+1},\ldots,j_{d'})\left[ \left(\outprod_{i=1}^t {\bf v}^i_{j_{t+1},\ldots,j_{d'}}\right) \outprod \left(A_{t+1}{\bf\accentset{\circ}{u}}_{j_{t+1}}^{t+1}\right) \outprod \left(\outprod_{i=t+2}^{d'}{\bf \accentset{\circ}{u}}_{j_i}^i\right)\right].
\end{aligned}
\end{equation*}
Therefore, summing over $j_{t+1}$-st mode and normalizing yields
\begin{equation*}
\begin{aligned}
\mathcal{Y}_{t+1}
&=\sum_{j_{d'}=1}^{r^\kappa}\ldots\sum_{j_{t+2}=1}^{r^{\kappa}} {\mathcal C}_{t+1}(j_{t+2},\ldots,j_{d'})\left[ \left(\outprod_{i=1}^{t+1} {\bf v}^i_{j_{t+2},\ldots,j_{d'}}\right) \outprod \left(\outprod_{i=t+2}^{d'}{\bf \accentset{\circ}{u}}_{j_i}^i\right)\right],
\end{aligned}
\end{equation*}
where new vectors ${\bf v}^i_{j_{t+2},\ldots,j_{d'}}$ (with one less subscript) are defined as
$$
{\bf v}^i_{j_{t+2},\ldots,j_{d'}} = \frac{{\bf \tilde  v}^i_{j_{t+2},\ldots,j_{d'}}}{\left\|{\bf \tilde  v}^i_{j_{t+2},\ldots,j_{d'}}\right\|} \quad \text{ and }   \quad {\mathcal C}_{t+1}(j_{t+2},\ldots,j_{d'})= \left\| {\bf \tilde v}^i_{j_{t+2},\ldots,j_{d'}}\right\|,
$$
where
$${\bf \tilde  v}^i_{j_{t+2},\ldots,j_{d'}} = \sum_{j_{t+1} = 1}^{r^{\kappa}}  {\mathcal C}_t(j_{t+1},\ldots,j_{d'}) \left(\outprod_{i=1}^t {\bf v}^i_{j_{t+1},\ldots,j_{d'}}\right) \outprod A_{t+1}{\bf\accentset{\circ}{u}}_{j_{t+1}}^{t+1}.
$$

\end{proof}

\section{The proof of Lemmas \ref{lemma: cover} and \ref{lem: CoverfancyB1}}\label{sec: proof of covering lemmas}

 \begin{proof}[The proof of Lemma \ref{lemma: cover}]
Classical results \cite{vershynin2018high} utilizing volumetric estimates show that 
\begin{equation}\label{eqn: classic nets}
\mathcal{N}(\mathbb{S}^{n-1},\|\cdot\|_2,t)\leq \left(\frac{3}{t}\right)^{n}.
\end{equation}
Let $\mathcal{N}_1$ be the $k$-fold Kronecker product of $t/2\kappa$-nets for $\mathbb{S}^{n-1}$. By \eqref{eqn: classic nets}, we may choose $\mathcal{N}_1$ to have cardinality at most $(6\kappa/t)^{\kappa n}$. Moreover, for any $\X=\bigotimes _{i=1}^\kappa  u^i \in S_1$ (defined by \eqref{s1}),
\begin{equation*}
\begin{aligned}
 \inf_{\tilde\X \in \mathcal{N}_1}\|\tilde\X -\X\|_{\rm F} &\le \inf_{\substack{\tilde\X \in \mathcal{N}_1\\ \tilde\X=\otimes _{i=1}^\kappa \tilde u^i}}
 \left\|\bigotimes_{i=1}^\kappa u^i - \bigotimes_{i=1}^\kappa \tilde u^i\right\|_{\rm F} \\ &\le \inf_{\substack{\tilde\X \in \mathcal{N}_1\\ \tilde\X=\otimes _{i=1}^\kappa \tilde u^i}}\sum_{j = 1}^{\kappa} \left\|\bigotimes_{i=1}^{j-1} u^i \bigotimes_{i=j}^\kappa \tilde u^i- \bigotimes_{i=1}^j u^i \bigotimes_{i=j+1}^\kappa \tilde u^i\right\|_{\rm F} \\
 &=\inf_{\substack{\tilde\X \in \mathcal{N}_1\\ \tilde\X=\otimes _{i=1}^\kappa \tilde u^i}} \sum_{j = 1}^{\kappa}\left(\prod_{i = 1}^{j-1}\|\tilde u^i\|_2\right) \|\tilde u^j - u^j\|_2\left(\prod_{i = j+1}^{\kappa}\|u^i\|_2\right) \\
 &\le \kappa \frac{t}{2\kappa}.
 \end{aligned}
\end{equation*}

Now, let $\mathcal{S}'_{1,2}$ be the  set of nonzero $\X = \X_1 + \X_2$ such that each $\X_i \in S_1 \cup \{{ \bf 0}\}$ and has $\langle \X_1,\X_2\rangle=0$. For a given $\mathcal{X}\in \mathcal{S}_{1,2}'$, we may set $\tilde X = \tilde X_1 + \tilde X_2$, where for $i=1,2$ $\tilde X_i$ is the best $(\mathcal{N}_1 \cup \{0\})$-approximation of $\X_i$, and
note that \begin{equation*}
\|\X - \tilde \X\|_F \le \|\X_1 - \tilde \X_1\|_F + \|\X_2 - \tilde \X_2\|_F \le t.
\end{equation*}
Thus, $\mathcal{N}_2\coloneqq (\mathcal{N}_1\cup\{{ \bf 0}\})+(\mathcal{N}_1+\cup\{{ \bf 0}\})=\{\X_1+\X_2~|~X_1, X_2\in\mathcal{N}_1+\cup\{{ \bf 0}\}\} \setminus \{ {\bf 0} \}$ is a $t$-net of $\mathcal{S}'_{1,2}$ with cardinality at most $\left(\left(\frac{6\kappa}{t}\right)^{\kappa n} + 1\right)^2$. Lastly, we note that each element of $\mathcal{S}'_{1,2}$ has norm at least one and that $\mathcal{S}_{1,2}$ is the projection of $\mathcal{S}_{1,2}'$ onto the unit sphere. Therefore, the projection of $\mathcal{N}_2$ onto the unit sphere is $t$-net for $\mathcal{S}_{1,2}$.
\end{proof}





The following technical lemma will be used in the proof of Lemma \ref{lem: CoverfancyB1}.

\begin{lemma}
\label{lem:setsetcover}
Let $A \subseteq B \subseteq \R^n$, and suppose that $C \subseteq B$ is an $\epsilon/2$-net of $B$.  Then, there exists an $\epsilon$-net $C' \subseteq A$ of $A$ with cardinality $|C'| \leq |C|$.
\end{lemma}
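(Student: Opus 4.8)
The plan is to construct $C'$ by ``pushing'' the useful elements of $C$ into $A$, trading the factor of two in the radius for the net property on the smaller set. First I would identify which points of $C$ can possibly help cover $A$: call a point $c \in C$ \emph{active} if the closed ball of radius $\epsilon/2$ about $c$ meets $A$, i.e.\ $A \cap \overline{B}(c,\epsilon/2) \neq \emptyset$. For each active $c$ I would select a single witness point $a_c \in A \cap \overline{B}(c,\epsilon/2)$, and then define $C' := \{\, a_c : c \text{ active} \,\}$. The key structural point of this definition is that $C' \subseteq A$ by construction, which is exactly what the covering-number convention in this paper requires of a net.

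Two verifications remain. The cardinality bound $|C'| \leq |C|$ is immediate, since $c \mapsto a_c$ is defined on a subset of $C$ and $C'$ is its image, whence $|C'| \leq |\{\text{active } c\}| \leq |C|$. For the covering property, I would take an arbitrary $a \in A$; since $A \subseteq B$ and $C$ is an $\epsilon/2$-net of $B$, there is some $c \in C$ with $\|a - c\| \leq \epsilon/2$. This particular $c$ is active, because $a$ itself witnesses $a \in A \cap \overline{B}(c,\epsilon/2)$, so its witness $a_c \in C'$ exists and satisfies $\|c - a_c\| \leq \epsilon/2$. The triangle inequality then gives $\|a - a_c\| \leq \|a - c\| + \|c - a_c\| \leq \epsilon$, so $a$ lies within $\epsilon$ of an element of $C'$, proving that $C'$ is an $\epsilon$-net of $A$.

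I do not expect a genuine obstacle here, as the statement is essentially a bookkeeping lemma. The one point meriting care is that the witness $a_c$ used to cover a given $a$ must be associated to a $c$ that actually $\epsilon/2$-covers $a$, rather than to some unrelated active point of $C$; this is guaranteed precisely because the net property of $C$ is invoked on $B \supseteq A$ for the specific $a$ at hand, which in turn certifies that the relevant $c$ is active. A minor technical remark is that selecting the witnesses $a_c$ invokes a choice when $A$ is infinite, but this is harmless.
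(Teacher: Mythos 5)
Your proof is correct and takes essentially the same approach as the paper: your ``active'' points of $C$ are exactly the paper's $C \setminus \tilde{C}$ (points within $\epsilon/2$ of $A$), your witnesses $a_c$ are the paper's projected points ${\bf x}'$, and the covering argument via the triangle inequality is identical. The only (harmless) difference is your use of closed balls where the paper uses strict inequalities, which if anything handles the boundary case of a net point at distance exactly $\epsilon/2$ from $A$ slightly more cleanly.
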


\begin{proof}
We will construct $C'$ from $C$ as follows.  First, let $\tilde{C}$ be the subset of $C$ whose elements are all at least $\epsilon/2$ away from $A$,
$$\tilde{C} \coloneqq \left \{ {\bf x} ~\big|~ {\bf x} \in C ~{\rm and}~ \inf_{{\bf y} \in A} \| {\bf x} - {\bf y} \|_2 \geq \epsilon/2 \right \}.$$
Next, for each ${\bf x} \in C \setminus \tilde{C}$ let ${\bf x}' \in A$ be any point of $A$ satisfying $\| {\bf x} - {\bf x}' \|_2 < \epsilon/2$, and then set
$$C' \coloneqq \bigcup_{{\bf x} \in C \setminus \tilde{C}} {\bf x}' \subseteq A.$$
Note that $|C'| \leq |C|$ by construction.

To see that $C'$ is an $\epsilon$-net of $A$, choose any ${\bf y} \in A \subseteq B$ and let ${\bf x} \in C$ be a point satisfying $\| {\bf y} - {\bf x}\| < \epsilon/2$.  Noting that ${\bf x} \notin \tilde{C}$, we can see that there is a ${\bf x}' \in C'$ such that $\| {\bf x} - {\bf x}' \|_2 < \epsilon/2$.  Therefore, by the triangle inequality,
$$\| {\bf y} - {\bf x}' \|_2 \leq \| {\bf y} - {\bf x} \|_2 + \| {\bf x} - {\bf x}' \|_2 < \epsilon.$$
This establishes the desired result.
\end{proof}

\begin{proof}[The proof of Lemma \ref{lem: CoverfancyB1}]

Our argument is based on the proof of \cite[Lemma 5]{rauhut2017low}, with necessary modifications to account for the fact that the tensor factors are not orthogonal. 

\textbf{Part 1: Construction of the net.} An arbritrary element of  $\mathcal{B}_{R,\mu,\theta,\mathbf{r}}$ can be written as $\X=\mathcal{C}\times_1V^1\times_2\ldots\times_d V^d$ where the core tensor $\mathcal{C}\in\mathbb{R}^{r\times\ldots\times r}$ is a $d$-mode tensor with Frobenius norm one and the factor matrices $V^i\in\mathbb{R}^{n\times r}$ have columns ${\bf v}^i_j$ with norm at most $R$. The set of all core tensors satisfying the orthogonality condition (d) is isometric to a subset of the unit ball in $\R^{r^d}$. Therefore, the admissible core tensors admit an $\eps_1$-net $\mathcal{N}_1$ of the cardinality at most $(3/\eps_1)^{r^d}$ by \cite[Lemma 1]{rauhut2017low}.  
We define the $\| \cdot \|_{1,2}$ norm by $\|V\|_{1,2} \coloneqq \max_j\|{\bf v}_j\|_2$ and note that by construction we have $\|V^i\|_{1,2}\leq R$. Therefore, our admissible factor matrices satisfying condition (b) have an  $\eps_2$-net (with respect to the $\|\cdot\|_{1,2}$ norm) of the cardinality at most $(3R/\epsilon_2)^{nr}$ again by \cite[Lemma 1]{rauhut2017low}. 
We now define  
\begin{equation*}
    \mathcal{N} \coloneqq\left\{\overline{\mathcal{C}}\times_1\overline{V}^1\ldots\times_d\overline{V}^d: \overline{\mathcal{C}}\in \mathcal{N}_1 \text{ and } \overline{V}^i\in \mathcal{N}_2 \right\}, \quad |\mathcal{N}| \le \left(\frac{3}{\epsilon_1}\right)^{r^d}\left(\frac{3R}{\epsilon_2}\right)^{dnr}.
\end{equation*}

Going forward we will prove that $\mathcal{N}$ above is an $\epsilon/2$-net of  $\mathcal{B}_{R,\mu,0,\mathbf{r}}$ for suitable choices of $\epsilon_1$ and $\epsilon_2$.  The result will then follow from noting $\mathcal{B}_{R,\mu,\theta,\mathbf{r}} \subseteq \mathcal{B}_{R,\mu,0,\mathbf{r}}$ and applying Lemma~\ref{lem:setsetcover}.\\

\textbf{Part 2: Term by term approximation.} 
Let us take an arbitrary element of $\mathcal{X}\in \mathcal{B}_{R,\mu,0,\mathbf{r}}$ and consider its component-wise approximation in $\overline{\mathcal{X}}\in\mathcal{N}$:
\begin{equation*}
\mathcal{X}=\mathcal{C}\times_1 V^1\times_2\ldots \times_d V^d \in  \mathcal{B}_{R,\mu,0,\mathbf{r}} \quad  \text{ and } \quad 
\overline{\mathcal{X}}=\overline{\mathcal{C}}\times_1 \overline{V}^1\times_2\ldots \times_d \overline{V}^d \in \mathcal{N},
\end{equation*}
where $\|\overline{\mathcal{C}}-\mathcal{C}\|_F\leq \epsilon_1$ and $\|\overline{V}^i-V^i\|_{1,2}\leq \epsilon_2$ for all $1\leq i \leq d.$
The triangle inequality implies that 
\begin{equation}\label{eq: main_covering_decomp}
    \|\overline{\mathcal{X}}-\mathcal{X}\|_F\leq \sum_{j=0}^d\|\mathcal{T}_j\|_F
\end{equation}
where $\mathcal{T}_0=(\overline{\mathcal{C}}-\mathcal{C})\times_1 \overline{V}^1\times_2\ldots \times_d \overline{V}^d$
and for $1\leq j\leq d$, 
\begin{equation*}
    \mathcal{T}_j=\mathcal{C}\times_1V^1\ldots \times_{j-1}V^{j-1}\times_j W^j\times_{j+1}\overline{V}^{j+1}\ldots\times_d  \overline{V}^d \quad \text{ for }1\leq j \leq d,
\end{equation*} where 
 $W^j \coloneqq V^j-\overline{V}^j$.\\

\textbf{Part 3: Bounding $\mathcal{T}_j$ for $\mathbf{j = 1, \ldots, d}$.} For $1\leq j \leq d$, we can expand
\begin{equation}\begin{aligned}\label{eqn: tj}
    \|\mathcal{T}_j\|_F^2 &=\sum_{t_d=1}^n\ldots\sum_{t_1=1}^n|\mathcal{T}_j(t_1,\ldots,t_d)|^2 \\
    &=\sum_{\substack{t_i=1\\ 1\leq i \leq d}}^n\left|\sum_{k_d=1}^r\ldots\sum_{k_1=1}^r \mathcal{C}(k_1,\ldots,k_d)\left[\prod_{i=1}^{j-1} v^i_{k_i}(t_i)\right]w^j_{k_j}(t_j)\left[\prod_{i=j+1}^d\overline{v}^i_{k_i}(t_i)\right]\right|^2,
\end{aligned}\end{equation}    
where ${\bf v}^i_1,\ldots,{\bf v}^i_r$ denote the columns of $V^i$, and similarly  ${\bf w}_1^i,\ldots,{\bf w}_r^i$ and $\overline{\bf v}_1^i,\ldots,\overline{\bf v}_r^i$ denote the columns of $W^i$ and $\overline{V}^i$. Exchanging the sums, we can rewrite \eqref{eqn: tj} in the following way:
\begin{equation*}\begin{aligned}
    \sum_{\substack{l_i=1\\ 1\leq i \leq d}}^r\sum_{\substack{k_i=1\\ 1\leq i \leq d}}^r
    &\mathcal{C}(k_1,\ldots,k_d)\mathcal{C}(l_1,\ldots,l_d) \sum_{\substack{t_i=1\\ 1\leq i \leq d}}^n  \left[\prod_{i=1}^{j-1}v^i_{l_i}(t_i) v^i_{k_i}(t_i)\right]w^j_{l_j}(t_j)w^j_{k_j}(t_j)\left[\prod_{i=j+1}^d\overline{v}^i_{k_i}(t_i)\overline{v}^i_{k_i}(t_i)\right] \\
    &=\sum_{\substack{l_i=1, k_i=1\\ 1\leq i \leq d}}^r
    \mathcal{C}(k_1,\ldots,k_d)\mathcal{C}(l_1,\ldots,l_d)\left[\prod_{i=1}^{j-1} \langle {\bf v}^i_{k_i},{\bf v}^i_{\ell_i}\rangle\right] \langle {\bf w}^j_{k_j},{\bf w}^j_{\ell_j}\rangle\left[\prod_{i=j+1}^d \langle \overline{\bf v}^i_{k_i},\overline{\bf v}^i_{\ell_i} \rangle\right].
\end{aligned}\end{equation*} 


We estimate scalar products by $|\langle {\bf w}^i_{k_i},{\bf w}^i_{\ell_i}\rangle| \le \|{\bf w}^i_{k_i}\|\|{\bf w}^i_{\ell_i}\| \le \eps_2^2$ and
 \begin{equation}\label{eq:v_bound}
 |\langle \overline{\bf v}^i_{k_i}, \overline{\bf v}^i_{\ell_i} \rangle|,|\langle {\bf v}^i_{k_i}, {\bf v}^i_{\ell_i} \rangle| \leq
 \begin{cases}
 R^2,  &\text{ if } k_i = l_i, \\
 \mu,  &\text{ otherwise }
 \end{cases}.
 \end{equation}
 Therefore, for any $1 \le j \le d$, we have 
 \begin{equation*}
     \begin{aligned}
 \sum_{\substack{l_i=1, k_i=1\\ 1\leq i \leq d}}^r
    &\mathcal{C}(k_1,\ldots,k_d)\mathcal{C}(l_1,\ldots,l_d)\left[\prod_{i=1}^{j-1} \langle {\bf v}^i_{k_i},{\bf v}^i_{\ell_i}\rangle\right] \langle {\bf w}^j_{k_j},{\bf w}^j_{\ell_j}\rangle\left[\prod_{i=j+1}^d \langle \overline{\bf v}^i_{k_i},\overline{\bf v}^i_{\ell_i} \rangle\right]\\
    &=\sum_{\substack{l_i=1, k_i=1\\ 1\leq i \leq d\\
    \\k_i=\ell_i \forall i\neq j}}^r\mathcal{C}(k_1,\ldots,k_d)\mathcal{C}(l_1,\ldots,l_d)\left[\prod_{i=1}^{j-1} \langle {\bf v}^i_{k_i},{\bf v}^i_{\ell_i}\rangle\right] \langle {\bf w}^j_{k_j},{\bf w}^j_{\ell_j}\rangle\left[\prod_{i=j+1}^d \langle \overline{\bf v}^i_{k_i},\overline{\bf v}^i_{\ell_i} \rangle\right]\\
    &\quad \quad \quad+\sum_{\substack{l_i=1, k_i=1\\ 1\leq i \leq d\\
    \\\exists i \neq j~{\rm s.t.}~k_i \neq \ell_i}}\mathcal{C}(k_1,\ldots,k_d)\mathcal{C}(l_1,\ldots,l_d)\left[\prod_{i=1}^{j-1} \langle {\bf v}^i_{k_i},{\bf v}^i_{\ell_i}\rangle\right] \langle {\bf w}^j_{k_j},{\bf w}^j_{\ell_j}\rangle\left[\prod_{i=j+1}^d \langle \overline{\bf v}^i_{k_i},\overline{\bf v}^i_{\ell_i} \rangle\right] \\
    &\leq R^{2(d-1)}\eps_2^2\sum_{\substack{l_i=1, k_i=1\\ 1\leq i \leq d\\
    \\k_i=\ell_i \forall i\neq j}}^r\mathcal{C}(k_1,\ldots,k_d)\mathcal{C}(l_1,\ldots,l_d)
    +\mu R^{2(d-2)}\eps_2^2\sum_{\substack{l_i=1, k_i=1\\ 1\leq i \leq d}}^r \left|\mathcal{C}(k_1,\ldots,k_d)\mathcal{C}(l_1,\ldots,l_d) \right|
 \end{aligned}
 \end{equation*}
since $\mu < 1 \leq R$ by assumption.  Hence, we have
\begin{equation*}\begin{aligned}
    \|\mathcal{T}_j\|_F^2
    ~&\leq~ R^{2(d-1)}\eps_2^2\sum_{\substack{k_i=1 \\ 1\le i \le d, i\neq j}}^r\sum_{k_j=1}^r\sum_{\ell_j=1}^r\mathcal{C}(k_1,\ldots,k_j, \ldots, k_d)\mathcal{C}(k_1,\ldots,l_j, \ldots, k_d) \\
     & \quad \quad + \mu R^{2(d-2)}\eps_2^2 \sum_{\substack{l_i=1, k_i=1\\ 1\leq i \leq d}}^r \left|\mathcal{C}(\ell_1,\ldots,\ell_d)\mathcal{C}(k_1,\ldots,k_d) \right| \\
     &= R^{2(d-1)}\eps_2^2\sum_{k_i=1}^d|\mathcal{C}(k_1,\ldots,k_j,\ldots,k_d)|^2 + \mu R^{2(d-2)}\eps_2^2 \left(\sum_{\substack{\ell_i=1 \\1 \le i \le d}}^r \left|\mathcal{C}(\ell_1,\ldots,\ell_d) \right|\right)^2,
\end{aligned}\end{equation*}
where in the last step we have used the fact that by the orthogonality property (d)
$$
\sum_{\substack{k_i=1 \\ i\neq j}}^r\mathcal{C}(k_1,\ldots,k_j, \ldots, k_d)\mathcal{C}(k_1,\ldots,l_j, \ldots, k_d) = 0 \quad \text{ unless } k_j = \ell_j.
$$
to see that
$$
\sum_{\substack{k_i=1 \\ i\neq j}}^r\sum_{k_j=1}^r\sum_{\ell_j=1}^r\mathcal{C}(k_1,\ldots,k_j, \ldots, k_d)\mathcal{C}(k_1,\ldots,l_j, \ldots, k_d) = \sum_{k_i=1}^d|\mathcal{C}(k_1,\ldots,k_j,\ldots,k_d)|^2.
$$

Recalling that all of our core tensors are unit norm, and appealing to Cauchy-Schwarz now allows us to see that
\begin{equation}\begin{aligned}
    \|\mathcal{T}_j\|_F^2
    ~&\leq~ R^{2(d-1)}\eps_2^2 + \mu R^{2(d-2)}\eps_2^2 r^{d} \| \mathcal{C} \|_{ F}^2\\
    \label{eqn: Tj}
    &\le \epsilon_2^2R^{2d-4}(R^2+\mu r^d).
\end{aligned}\end{equation}


\bigskip
\textbf{Part 4: Bounding $\mathcal{T}_0$.} We note that for any $1\leq i \leq d$, the $\| \cdot \|_F \to \| \cdot \|_F$  operator norm of the   operator $\mathcal{X}\rightarrow \mathcal{X}\times_i V^i$ is the same as the $\ell_2$-operator norm of the matrix $V^i$ acting on $\mathbb{R}^r$. Next, we observe that for all ${\bf x} = (x_1, \ldots, x_r) \in \mathbb{R}^r$ with $\|{\bf x}\|_2=1,$ we have 
\begin{equation*}
    \|V^i {\bf x}\|_2^2= \sum_{k,l = 1}^r \langle {\bf v}^i_k,{\bf v}^i_l\rangle x_l x_k=\sum_{k = 1}^r \langle {\bf v}^i_k,{\bf v}^i_k\rangle x_k^2 + \sum_{k\neq l = 1}^r \langle {\bf v}^i_k,{\bf v}^i_l\rangle x_k x_l.
\end{equation*}
Thus, bounding the coefficients by \eqref{eq:v_bound} and using Cauchy–Schwarz we have that 
\begin{equation*}
    \|V^i {\bf x}\|_2^2\leq R^2 \sum_{k = 1}^r x_k^2 + \mu \sum_{k\neq l = 1}^r |x_k| |x_l| \leq R^2 \| {\bf x}\|^2_2 + \mu \left( \sum_{k = 1}^r |x_k| \right)^2 \leq  (R^2+r\mu)\| {\bf x}\|^2_2.
\end{equation*}
Therefore, since $\|\mathcal{C}-\overline{\mathcal{C}}\|_F^2\leq \epsilon_1$,
\begin{equation*}
    \|\mathcal{T}_0\|^2_F = \|(\mathcal{C}-\overline{\mathcal{C}})\times_1\overline{V}_1\times_2\ldots\times_d \overline{V}_d\|^2_F \leq (R^2+r \mu)^d\epsilon_1^2.
\end{equation*}

\textbf{Part 5: Conclusion and cardinality estimate.} 
By \eqref{eq: main_covering_decomp}, we get that $ \|\mathcal{X}-\overline{\mathcal{X}}\|_F  \le \eps/2$ if each $\|\mathcal{T}_j\| \le \eps/2(d+1)$. That is, taking $(R^2+\mu r)^{d/2}\epsilon_1 \coloneqq \eps/2(d+1)$ and $\epsilon_2R^{d-2}(R^2+\mu r^d)^{1/2} \coloneqq \eps/2(d+1)$, we get
\begin{equation}\begin{aligned}
|\mathcal{N}| \le  \left(\frac{3}{\epsilon_1}\right)^{r^d}\left(\frac{3R}{\epsilon_2}\right)^{dnr} &= \left(\frac{6(d+1)}{\eps}\right)^{r^d + rnd} \left((R^2+\mu r)^{d/2}\right)^{r^d}\left(R^{d-1}(R^2+\mu r^d)^{1/2}\right)^{dnr}.
\end{aligned}\end{equation}
This concludes the proof of Lemma~\ref{lem: CoverfancyB1}.
\end{proof}


 
 \section{The proof of Proposition \ref{thm: sup_chaos}}\label{sec: The proof of the Proposition}

Proposition~\ref{thm: sup_chaos} follows by essentially repeating the proof of Theorem 2 of \cite{rauhut2017low}. We include the argument here for completeness. The key probabilistic component of the proof is the supremum of chaos inequality proved in \cite{Krahmer2014Suprema}:
 \begin{theorem}[\cite{Krahmer2014Suprema}, Theorem 3.1]\label{gaus_chaos}
 Let $\mathcal{A}$ be a collection of matrices, which size is measured through the following three quantities $E, V$ and $U$ defined as
 \begin{equation}\label{evu} \begin{aligned}
 E(\mathcal{A})&\coloneqq\gamma_2(\mathcal{A})(\gamma_2(\mathcal{A})+d_F(\mathcal{A}))+d_F(\mathcal{A})d_{2}(\mathcal{A})\\
 V(\mathcal{A})&\coloneqq d_{2}(\mathcal{A})(\gamma_2(\mathcal{A})+d_F(\mathcal{A})),\quad\text{and}\quad \\
 U(\mathcal{A})&\coloneqq d_{2}^2(\mathcal{A}),
 \end{aligned}
  \end{equation}
   where
\begin{equation*}
d_F(\mathcal{A})\coloneqq\sup_{A\in\mathcal{A}}\|A\|_F,\quad d_{2}(\mathcal{A})\coloneqq\sup_{A\in\mathcal{A}}\|A\|_{2\rightarrow2}\end{equation*}
and $\gamma_2(\mathcal{A}) = \gamma_2(\mathcal{A},\|\cdot\|_{2\rightarrow2})$ is Talagrand's functional. Let $\vct{\xi}$ be a sub-gaussian random vector whose entries $\xi_j$ are independent, mean-zero and variance-1. 
 Then, for $t>0,$
 \begin{equation*}
 \mathbb{P}\left(\sup_{A\in\mathcal{A}} \left| \|A\vct{\xi}\|_2^2-\mathbb{E}\|A\vct{\xi}\|_2^2\right|\geq c_1E(\mathcal{A})+t\right)\leq 2\exp\left(-c_2\min\left\{\frac{t^2}{V(\mathcal{A})^2},\frac{t}{U(\mathcal{A})}\right\}\right),
 \end{equation*}
 where the constants $c_1,c_2$ depend only on the sub-gaussian constant $L$.
 \end{theorem}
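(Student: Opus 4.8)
The statement to establish is a concentration inequality for the second-order (quadratic) chaos process $A \mapsto \|A\vct{\xi}\|_2^2$ indexed by the matrix collection $\mathcal{A}$, so the plan is to combine a pointwise deviation estimate for a single increment with a generic chaining argument adapted to quadratic chaos. First I would observe that for fixed $A,B\in\mathcal{A}$ the increment $Z_{A,B} \coloneqq \|A\vct{\xi}\|_2^2 - \|B\vct{\xi}\|_2^2 = \langle (A^*A - B^*B)\vct{\xi}, \vct{\xi}\rangle$ is itself a quadratic form in $\vct{\xi}$, so the Hanson--Wright inequality for sub-gaussian vectors (see, e.g., \cite{vershynin2018high}) supplies the mixed sub-gaussian / sub-exponential increment tail
\[
\P\left(\left|Z_{A,B} - \E Z_{A,B}\right| \geq s\right) \leq 2\exp\left(-c\min\left\{\frac{s^2}{\|A^*A - B^*B\|_F^2}, \frac{s}{\|A^*A - B^*B\|_{2\to2}}\right\}\right)
\]
for all $s > 0$.

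Next I would reduce the two increment pseudometrics to the single operator-norm metric on $\mathcal{A}$. Writing $A^*A - B^*B = \tfrac12\big[(A-B)^*(A+B) + (A+B)^*(A-B)\big]$ and using sub-multiplicativity of the relevant norms gives
\[
\|A^*A - B^*B\|_F \leq 2\, d_F(\mathcal{A})\,\|A-B\|_{2\to2}, \qquad \|A^*A - B^*B\|_{2\to2} \leq 2\, d_2(\mathcal{A})\,\|A-B\|_{2\to2},
\]
so both chaining metrics are scalar multiples of $\|\cdot\|_{2\to2}$, with scales $d_F(\mathcal{A})$ and $d_2(\mathcal{A})$. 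By the degree-one homogeneity of Talagrand's functionals in the metric, $\gamma_2$ and $\gamma_1$ computed in these metrics reduce to $d_F(\mathcal{A})\gamma_2(\mathcal{A})$ and $d_2(\mathcal{A})\gamma_1(\mathcal{A})$ with $\gamma_2(\mathcal{A}) = \gamma_2(\mathcal{A},\|\cdot\|_{2\to2})$, while the corresponding diameters collapse to $\lesssim d_F(\mathcal{A})d_2(\mathcal{A})$ and $\lesssim d_2(\mathcal{A})^2$, already explaining the appearance of $U(\mathcal{A}) = d_2(\mathcal{A})^2$ and the diameter contribution to $V(\mathcal{A})$.

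I would then feed the increment tail into a generic-chaining bound for processes with mixed tails (Talagrand's machinery; cf. \cite{vershynin2018high}), which upgrades the pointwise estimate to a uniform bound of the shape $\sup_{A\in\mathcal{A}}\big|\|A\vct{\xi}\|_2^2 - \E\|A\vct{\xi}\|_2^2\big| \lesssim E(\mathcal{A}) + \sqrt{u}\,V(\mathcal{A}) + u\,U(\mathcal{A})$ with probability at least $1 - 2e^{-u}$; substituting $t = c\big(\sqrt{u}\,V(\mathcal{A}) + u\,U(\mathcal{A})\big)$ and optimizing recovers exactly the stated $\min\{t^2/V^2,\, t/U\}$ tail. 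The subtle point is pinning down the mean term as $E(\mathcal{A}) = \gamma_2(\mathcal{A})(\gamma_2(\mathcal{A}) + d_F(\mathcal{A})) + d_F(\mathcal{A})d_2(\mathcal{A})$ rather than something containing $\gamma_1$: one first decouples, replacing $\vct{\xi}\otimes\vct{\xi}$ by $\vct{\xi}\otimes\vct{\xi}'$ with $\vct{\xi}'$ an independent copy, and then chains in the two variables separately, so that two nested levels of $\gamma_2$-chaining multiply to produce the quadratic term $\gamma_2(\mathcal{A})^2$ while the mixed boundary interactions and the diagonal chaos contribute the $\gamma_2(\mathcal{A})d_F(\mathcal{A})$ and $d_F(\mathcal{A})d_2(\mathcal{A})$ pieces.

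Carrying out this nested, two-variable chaining via the majorizing-measure construction --- and separately controlling the diagonal part $\sum_i (A^*A)_{ii}(\xi_i^2 - 1)$ by a Bernstein-type estimate uniformly over $\mathcal{A}$ --- is the technical core and the step I expect to be the main obstacle. Everything else (Hanson--Wright, the norm factorizations, and the homogeneity bookkeeping) is comparatively routine; the difficulty lies in showing that the second-order structure converts the would-be $\gamma_1$ term into the product $\gamma_2^2$, and in tracking the constants through the chaining so that the mean, sub-gaussian, and sub-exponential scales separate cleanly into $E$, $V$, and $U$.
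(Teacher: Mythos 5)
A point of context first: the paper does not prove this statement at all --- it is quoted verbatim from \cite{Krahmer2014Suprema} (Theorem 3.1 there) and used as a black box in the proof of Proposition~\ref{thm: sup_chaos}. So the relevant comparison is between your sketch and the original Krahmer--Mendelson--Rauhut argument. Your roadmap correctly identifies the ingredients of that argument: Hanson--Wright for a single increment, the factorization $A^*A-B^*B=\tfrac12\left[(A-B)^*(A+B)+(A+B)^*(A-B)\right]$ giving $\|A^*A-B^*B\|_F\le 2\,d_F(\mathcal{A})\,\|A-B\|_{2\to2}$ and $\|A^*A-B^*B\|_{2\to2}\le 2\,d_2(\mathcal{A})\,\|A-B\|_{2\to2}$, decoupling, and a nested chaining in the two decoupled variables.

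However, there is a genuine gap, and you have in effect named it yourself. The pivotal step ``feed the increment tail into a generic-chaining bound for processes with mixed tails'' does not work as stated: applied with the two metrics you derived, the off-the-shelf mixed-tail chaining theorem yields a mean term of the form $d_F(\mathcal{A})\,\gamma_2(\mathcal{A},\|\cdot\|_{2\to2})+d_2(\mathcal{A})\,\gamma_1(\mathcal{A},\|\cdot\|_{2\to2})$, and the $\gamma_1$ term cannot in general be dominated by $\gamma_2(\mathcal{A})\left(\gamma_2(\mathcal{A})+d_F(\mathcal{A})\right)$ --- eliminating it is precisely the content of the KMR theorem, not a consequence of standard machinery. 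The correct mechanism is the one you defer to your final paragraph: after decoupling, conditionally on $\vct{\xi}'$ the process $A\mapsto\langle\vct{\xi},A^*A\vct{\xi}'\rangle$ is sub-gaussian with increments controlled by $\|A-B\|_{2\to2}\,\sup_{A\in\mathcal{A}}\|A\vct{\xi}'\|_2$, and a second chaining bounds $\sup_{A\in\mathcal{A}}\|A\vct{\xi}'\|_2$ by roughly $\gamma_2(\mathcal{A})+d_F(\mathcal{A})$, whence the product structure of $E(\mathcal{A})$. The same issue affects the deviation scale: the $\gamma_2(\mathcal{A})\,d_2(\mathcal{A})$ portion of $V(\mathcal{A})=d_2(\mathcal{A})\left(\gamma_2(\mathcal{A})+d_F(\mathcal{A})\right)$ is not a diameter term and only emerges when the chaining is executed at the level of $L_p$ moments (or with chains truncated at scale $\ln(1/\eta)$); your proposed route, carried out literally, would produce a provably weaker tail. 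Finally, the diagonal part $\sum_j (A^*A)_{jj}(\xi_j^2-1)$ also cannot be handled by Bernstein ``uniformly over $\mathcal{A}$'' without its own chaining when $\mathcal{A}$ is infinite. In short, your proposal is an accurate map of where the proof lives, but the decisive terrain --- the nested two-variable chaining that converts the would-be $\gamma_1$ into $\gamma_2^2$ and produces the stated $V$ --- is left unexplored, so the attempt does not constitute a proof.
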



 \begin{proof}[The proof of Proposition \ref{thm: sup_chaos}] Observe that
$$\eps_S \coloneqq \sup_{\mathcal{X} \in S, \|\mathcal{X}\|= 1}\left|\|A(\vect(\mathcal{X}))\|^2 - 1\right| = \sup_{\mathcal{X} \in S, \|\mathcal{X}\|= 1}\left|\|V_{\X}\vct{\xi}\|^2 - \E\|V_{\X}\vct{\xi}\|^2\right|,$$
where $\xi \in \R^{n^\kappa m}$ is a vector with i.i.d. sub-gaussian entries, and $V_{\X} \in \R^{m \times n^\kappa m}$ is a block-diagonal matrix 
$$
V_{\X} = \frac{1}{\sqrt{m}} 
\begin{bmatrix}
\vect(\X)^T & 0 & \ldots & 0\\
0 & \vect(\X)^T & \ldots & 0\\
\ldots & \ldots & \ldots & \ldots\\
0 & 0 & \ldots & \vect(\X)^T
\end{bmatrix}.
$$
Let $\V = \V(\S)$ be the set of all such matrices $V_{\X}$ where $\X \in \S$. We will now apply Theorem~\ref{gaus_chaos}. It is easy to check (see \cite[Theorem 3]{rauhut2017low}) that 
\begin{equation}\label{d_2_F}
d_2(\V) = m^{-1/2} \text{ and } d_F(\V) = 1,
\end{equation}
therefore, by Theorem~\ref{gaus_chaos},
\begin{equation}\label{sc-estimate}
\P\left\{\eps_{\S} \ge c_1\left(\gamma_2^2 + \gamma_2 + \frac{1}{\sqrt{m}}\right) + t\right\} \le 2 \exp\left(-c_2 \min\left\{mt, \frac{\sqrt{m}t^2}{1 + \gamma_2}\right\}\right) \text{ for all } t > 0,
\end{equation}
where $\gamma_2 = \gamma_2(\V).$

For any set $\S_0$, the Talagrand functional $\gamma_2(\S_0)$  is a functional of  $\S_0$ which can be bounded by the Dudley-type integral
\begin{equation}\label{dudley-est}
\gamma_2 (\S_0)\le C \int_0^{d_2(\S_0)}\sqrt{\ln{\mathcal{N}(\S_0, \| \cdot \|_{2 \rightarrow 2}, t)}}~dt.  \end{equation}
We need to consider $\S_0 = \V(\S)$. We note that due to \eqref{d_2_F}, 
\begin{equation*}
\mathcal{N}(\mathcal{V}({\mathcal{S}}),\|\cdot\|_{2 \rightarrow 2},u)\leq \mathcal{N}(\mathcal{\S},\|\cdot\|_{\rm F},\sqrt{m}u),
\end{equation*}
and so by a change of variables
$$
    \gamma_2(\V(\S))\leq C\frac{1}{\sqrt{m}}\int_0^{1}\sqrt{\ln(\mathcal{N}(\mathcal{S},\|\cdot\|_{\rm F},u))}~du.
$$
This implies that $\gamma_2(\V) \le C\tilde{C}^{-1/2}\eps$ by the main condition on $m$ given in the statement of Proposition~\ref{thm: sup_chaos}. Choosing $\tilde{C}$ large enough, this ensures that $\gamma_2 \le \eps/6c_1$ for any $c_1 > 1$.

Taking $t = \eps/2$ and using $\eps < 1$,  we can now rewrite \eqref{sc-estimate} as
$$\P\left\{\eps_{\S} \ge c_1\left(\left[\frac{\eps}{6c_1}\right]^2 + \frac{\eps}{6c_1} + \frac{1}{\sqrt{m}}\right) + \frac{\eps}{2} \right\} \le 2 \exp\left(-C_2m\eps^2\right)
$$
If $\tilde{C}$ is chosen large enough, then $m \ge \tilde{C} \eps^{-2}\ln{\eta^{-1}}$ implies that the probability bound is at most $\eta$ and $m \ge \tilde{C}\eps^{-2}$ implies that $m^{-1/2} \le \eps/6c_1$. This concludes the proof of Proposition~\ref{thm: sup_chaos}.
\end{proof}


\begin{thebibliography}{10}

\bibitem{achlioptas2003database}
D.~Achlioptas.
\newblock Database-friendly random projections: Johnson-{L}indenstrauss with
  binary coins.
\newblock {\em Journal of Computer and System Sciences}, 66(4):671--687, 2003.

\bibitem{ahle2020oblivious}
T.~D. Ahle, M.~Kapralov, J.~B. Knudsen, R.~Pagh, A.~Velingker, D.~P. Woodruff,
  and A.~Zandieh.
\newblock Oblivious sketching of high-degree polynomial kernels.
\newblock In {\em Proceedings of the Fourteenth Annual ACM-SIAM Symposium on
  Discrete Algorithms}, pages 141--160. SIAM, 2020.

\bibitem{bamberger2021johnson}
S.~Bamberger, F.~Krahmer, and R.~Ward.
\newblock Johnson-lindenstrauss embeddings with kronecker structure.
\newblock {\em arXiv preprint arXiv:2106.13349}, 2021.

\bibitem{baraniuk_random_2009}
R.~G. Baraniuk and M.~B. Wakin.
\newblock Random {Projections} of {Smooth} {Manifolds}.
\newblock {\em Foundations of Computational Mathematics}, 9(1):51--77, Feb.
  2009.

\bibitem{beck2000multiconfiguration}
M.~H. Beck, A.~J{\"a}ckle, G.~A. Worth, and H.-D. Meyer.
\newblock The multiconfiguration time-dependent {H}artree ({M}{C}{T}{D}{H})
  method: a highly efficient algorithm for propagating wavepackets.
\newblock {\em Phys. Rep.}, 324(1):1--105, 2000.

\bibitem{bengua2017efficient}
J.~A. Bengua, H.~N. Phien, H.~D. Tuan, and M.~N. Do.
\newblock Efficient tensor completion for color image and video recovery:
  Low-rank tensor train.
\newblock {\em IEEE T. Image Process.}, 26(5):2466--2479, 2017.

\bibitem{blanchard2015cgiht}
J.~D. Blanchard, J.~Tanner, and K.~Wei.
\newblock Cgiht: conjugate gradient iterative hard thresholding for compressed
  sensing and matrix completion.
\newblock {\em Information and Inference: A Journal of the IMA}, 4(4):289--327,
  2015.

\bibitem{blumensath2009iterative}
T.~Blumensath and M.~E. Davies.
\newblock Iterative hard thresholding for compressed sensing.
\newblock {\em Applied and computational harmonic analysis}, 27(3):265--274,
  2009.

\bibitem{blumensath2010normalized}
T.~Blumensath and M.~E. Davies.
\newblock Normalized iterative hard thresholding: Guaranteed stability and
  performance.
\newblock {\em IEEE Journal of selected topics in signal processing},
  4(2):298--309, 2010.

\bibitem{candes2010tight}
E.~J. Candes and Y.~Plan.
\newblock Tight oracle bounds for low-rank matrix recovery from a minimal
  number of random measurements.
\newblock {\em arXiv preprint arXiv:1001.0339}, 2010.

\bibitem{candes2009exact}
E.~J. Cand{\`e}s and B.~Recht.
\newblock Exact matrix completion via convex optimization.
\newblock {\em Foundations of Computational mathematics}, 9(6):717--772, 2009.

\bibitem{candes2006stable}
E.~J. Candes, J.~K. Romberg, and T.~Tao.
\newblock Stable signal recovery from incomplete and inaccurate measurements.
\newblock {\em Communications on Pure and Applied Mathematics: A Journal Issued
  by the Courant Institute of Mathematical Sciences}, 59(8):1207--1223, 2006.

\bibitem{carpentier2018iterative}
A.~Carpentier and A.~K. Kim.
\newblock An iterative hard thresholding estimator for low rank matrix recovery
  with explicit limiting distribution.
\newblock {\em Statistica Sinica}, 28(3):1371--1393, 2018.

\bibitem{dasgupta1999elementary}
S.~Dasgupta and A.~Gupta.
\newblock An elementary proof of the {J}ohnson-{L}indenstrauss lemma.
\newblock {\em International Computer Science Institute, Technical Report},
  22(1):1--5, 1999.

\bibitem{foucart2011hard}
S.~Foucart.
\newblock Hard thresholding pursuit: an algorithm for compressive sensing.
\newblock {\em SIAM Journal on Numerical Analysis}, 49(6):2543--2563, 2011.

\bibitem{foucart2012sparse}
S.~Foucart.
\newblock Sparse recovery algorithms: sufficient conditions in terms of
  restricted isometry constants.
\newblock In {\em Approximation Theory XIII: San Antonio 2010}, pages 65--77.
  Springer, 2012.

\bibitem{foucart_mathematical_2013}
S.~Foucart and H.~Rauhut.
\newblock {\em A {Mathematical} {Introduction} to {Compressive} {Sensing}}.
\newblock Applied and {Numerical} {Harmonic} {Analysis}. Springer New York, New
  York, NY, 2013.

\bibitem{goulart2015iterative}
J.~H. d.~M. Goulart and G.~Favier.
\newblock An iterative hard thresholding algorithm with improved convergence
  for low-rank tensor recovery.
\newblock In {\em 2015 23rd European Signal Processing Conference (EUSIPCO)},
  pages 1701--1705. IEEE, 2015.

\bibitem{grotheer2019iterative}
R.~Grotheer, S.~Li, A.~Ma, D.~Needell, and J.~Qin.
\newblock Iterative hard thresholding for low cp-rank tensor models.
\newblock {\em arXiv preprint arXiv:1908.08479}, 2019.

\bibitem{SWIM19siht}
R.~Grotheer, A.~Ma, D.~Needell, S.~Li, and J.~Qin.
\newblock Stochastic iterative hard thresholding for low {T}ucker rank tensor
  recovery.
\newblock In {\em Proc. Information Theory and Applications}, 2020.

\bibitem{iwen2019lower}
M.~Iwen, D.~Needell, E.~Rebrova, and A.~Zare.
\newblock Lower memory oblivious (tensor) subspace embeddings with fewer random
  bits: Modewise methods for least squares.
\newblock {\em SIAM Journal on Matrix Analysis and Applications}, 42:376 --
  416, 2021.

\bibitem{mark-s-paper}
M.~A. Iwen, B.~Schmidt, and A.~Tavakoli.
\newblock On fast johnson-lindernstrauss embeddings of compact submanifolds of
  {${\mathbbm R}^N$} with boundary.
\newblock {\em In preparation}, 2021.

\bibitem{jin2019faster}
R.~Jin, T.~G. Kolda, and R.~Ward.
\newblock Faster johnson-lindenstrauss transforms via kronecker products.
\newblock {\em Information and Inference: A Journal of the IMA},
  https://doi.org/10.1093/imaiai/iaaa02.

\bibitem{kolda2009tensor}
T.~G. Kolda and B.~W. Bader.
\newblock Tensor decompositions and applications.
\newblock {\em SIAM review}, 51(3):455--500, 2009.

\bibitem{Krahmer2014Suprema}
F.~Krahmer, S.~Mendelson, and H.~Rauhut.
\newblock Suprema of chaos processes and the restricted isometry property.
\newblock {\em Communications on Pure and Applied Mathematics},
  67(11):1877--1904, 2014.

\bibitem{liu2012tensor}
J.~Liu, P.~Musialski, P.~Wonka, and J.~Ye.
\newblock Tensor completion for estimating missing values in visual data.
\newblock {\em IEEE T. Pattern Anal.}, 35(1):208--220, 2012.

\bibitem{lubich2008quantum}
C.~Lubich.
\newblock {\em From quantum to classical molecular dynamics: reduced models and
  numerical analysis}.
\newblock European Mathematical Society, 2008.

\bibitem{malik2020guarantees}
O.~A. Malik and S.~Becker.
\newblock Guarantees for the kronecker fast johnson--lindenstrauss transform
  using a coherence and sampling argument.
\newblock {\em Linear Algebra and its Applications}, 602:120--137, 2020.

\bibitem{mo2011new}
Q.~Mo and S.~Li.
\newblock New bounds on the restricted isometry constant $\delta_{2k}$.
\newblock {\em Applied and Computational Harmonic Analysis}, 31(3):460--468,
  2011.

\bibitem{needell2009cosamp}
D.~Needell and J.~A. Tropp.
\newblock {CoSaMP}: Iterative signal recovery from incomplete and inaccurate
  samples.
\newblock {\em Applied and computational harmonic analysis}, 26(3):301--321,
  2009.

\bibitem{oymak2018isometric}
S.~Oymak, B.~Recht, and M.~Soltanolkotabi.
\newblock Isometric sketching of any set via the restricted isometry property.
\newblock {\em Information and Inference: A Journal of the IMA}, 7(4):707--726,
  2018.

\bibitem{rakhshan2020tensorized}
B.~T. Rakhshan and G.~Rabusseau.
\newblock Tensorized random projections.
\newblock {\em arXiv preprint arXiv:2003.05101}, 2020.

\bibitem{rauhut2017low}
H.~Rauhut, R.~Schneider, and {\v{Z}}.~Stojanac.
\newblock Low rank tensor recovery via iterative hard thresholding.
\newblock {\em Linear Algebra and its Applications}, 523:220--262, 2017.

\bibitem{recht2010guaranteed}
B.~Recht, M.~Fazel, and P.~A. Parrilo.
\newblock Guaranteed minimum-rank solutions of linear matrix equations via
  nuclear norm minimization.
\newblock {\em SIAM review}, 52(3):471--501, 2010.

\bibitem{romera2013multilinear}
B.~Romera-Paredes, H.~Aung, N.~Bianchi-Berthouze, and M.~Pontil.
\newblock Multilinear multitask learning.
\newblock In {\em International Conference on Machine Learning}, pages
  1444--1452, 2013.

\bibitem{tanner2013normalized}
J.~Tanner and K.~Wei.
\newblock Normalized iterative hard thresholding for matrix completion.
\newblock {\em SIAM Journal on Scientific Computing}, 35(5):S104--S125, 2013.

\bibitem{vasilescu2005multilinear}
M.~A.~O. Vasilescu and D.~Terzopoulos.
\newblock Multilinear independent components analysis.
\newblock In {\em 2005 IEEE Computer Society Conference on Computer Vision and
  Pattern Recognition (CVPR'05)}, volume~1, pages 547--553. IEEE, 2005.

\bibitem{vershynin2018high}
R.~Vershynin.
\newblock {\em High-dimensional probability: An introduction with applications
  in data science}, volume~47.
\newblock Cambridge university press, 2018.

\bibitem{vu2019accelerating}
T.~Vu and R.~Raich.
\newblock Accelerating iterative hard thresholding for low-rank matrix
  completion via adaptive restart.
\newblock In {\em ICASSP 2019-2019 IEEE International Conference on Acoustics,
  Speech and Signal Processing (ICASSP)}, pages 2917--2921. IEEE, 2019.

\bibitem{zhang2011sparse}
T.~Zhang.
\newblock Sparse recovery with orthogonal matching pursuit under {RIP}.
\newblock {\em IEEE Trans. Information Theory}, 57(9):6215--6221, 2011.

\end{thebibliography}
\end{document}